\numberwithin{equation}{section}
\newtheorem{theorem}{Theorem}[section]
\newtheorem{proposition}[theorem]{Proposition}
\newtheorem{remark}{Remark}[section]
\newtheorem{example}{Example}[section]
\newcommand{\OMIT}[1]{{\bf [OMIT:} #1 \ {\bf --- end OMIT] }}  
   \renewcommand{\OMIT}[1]{}            
\newcommand{\embf}{\bf}
\newcommand{\qedJIAM}{\qed}
 \renewcommand{\qedJIAM}{}
\newcommand{\finboxARX}{\finbox}
\newcommand{\RR}{{\bf R}}
\newcommand{\ZZ}{{\bf Z}}
\newcommand{\finbox}{\hspace*{\fill}$\rule{0.17cm}{0.17cm}$}
\newcommand{\odotZ}{\overset{....}} 
\newcommand*\patchAmsMathEnvironmentForLineno[1]{
  \expandafter\let\csname old#1\expandafter\endcsname\csname #1\endcsname
  \expandafter\let\csname oldend#1\expandafter\endcsname\csname end#1\endcsname
  \renewenvironment{#1}
     {\linenomath\csname old#1\endcsname}
     {\csname oldend#1\endcsname\endlinenomath}}
\newcommand*\patchBothAmsMathEnvironmentsForLineno[1]{
  \patchAmsMathEnvironmentForLineno{#1}
  \patchAmsMathEnvironmentForLineno{#1*}}
\begin{document}

\title{
Decreasing Minimization on Base-Polyhedra: 
Relation Between Discrete and Continuous Cases
}

\author{Andr\'as Frank\thanks{MTA-ELTE Egerv\'ary Research Group,
Department of Operations Research, E\"otv\"os University, P\'azm\'any
P. s. 1/c, Budapest, Hungary, H-1117. 
e-mail: {\tt frank@cs.elte.hu}. 
ORCID: 0000-0001-6161-4848.
The research was partially supported by the
National Research, Development and Innovation Fund of Hungary
(FK\_18) -- No. NKFI-128673.
}
\ \ and \ 
{Kazuo Murota\thanks{
The Institute of Statistical Mathematics,
Tokyo 190-8562, Japan.
Also
Faculty of Economics and Business Administration,
Tokyo Metropolitan University, Tokyo 192-0397, Japan, 
e-mail: {\tt murota@tmu.ac.jp}. 
ORCID: 0000-0003-1518-9152.
The research was supported by 
JSPS KAKENHI Grant Number JP20K11697. 
}}}

\date{February 2022 / April 2022}

\maketitle

\begin{abstract} 
This paper is concerned with the relationship between the discrete
and the continuous decreasing minimization problem on base-polyhedra.
The continuous version (under the name of lexicographically optimal
base of a polymatroid) was solved by Fujishige in 1980,
with subsequent elaborations described in his book (1991).
The discrete counterpart of the dec-min problem 
(concerning M-convex sets) was settled only recently 
by the present authors, with a strongly polynomial algorithm 
to compute not only a single decreasing minimal element 
but also the matroidal structure of all decreasing minimal elements and 
the dual object called the canonical partition.
The objective of this paper is to offer a complete picture
on the relationship 
between the continuous and discrete dec-min problems
on base-polyhedra
by establishing novel technical results and 
integrating known results.
In particular, we derive proximity results, 
asserting the geometric closeness of the
decreasingly minimal elements in the continuous and discrete cases,
by revealing the relation between 
the principal partition 
and the canonical partition.
We also describe decomposition-type algorithms for the discrete case
following the approach of Fujishige and Groenevelt.
\end{abstract}

{\bf Keywords}:  
Base-polyhedron, 
M-convex set, 
Decreasing minimization, 
Lexicographic optimization, 
Principal partition,
Decomposition algorithm.

{\bf Mathematics Subject Classification (2010)}: 90C27, 05C, 68R10




\newpage
\tableofcontents 

\newpage


\section{Introduction}
\label{SCintro}

An element of a set of vectors, in general, 
is called {\embf decreasingly minimal} (dec-min)
if its largest component is as small as possible, within this, 
its second largest component is as small as possible, and so on.
The term ``decreasing minimization'' means the problem of finding
a dec-min element of a given set of vectors
(or a minimum cost dec-min element with respect to a given linear cost-function).  
When the given set of vectors consists of integral vectors, 
this problem is also referred to as discrete decreasing minimization.  
In the literature, typically the term lexicographic 
optimization is used, but we prefer ``decreasing minimization''
because we also consider its natural counterpart 
``increasing maximization,'' and the use of these two symmetric terms
seems more appropriate to distinguish the two related notions.  
An element of a set of vectors is called {\embf increasingly maximal}
(inc-max) if its smallest component is as large as possible, within
this, its second smallest component is as large as possible, and so on.
For example, 
the vector $(2,2,1,1)$ is both dec-min and inc-max in the set
$\{ (2,2,1,1), (2,2,2,0), (2,1,3,0) \}$.
In another set $\{ (2,0,0,0), (1,-1,1,1) \}$,
the dec-min element is $(1,-1,1,1)$
and the inc-max element is $(2,0,0,0)$.

The decreasing minimization (even its weighted form) on a base-polyhedron $B$ 
was investigated by Fujishige \cite{Fuj80} around 1980
under the name of lexicographically optimal bases,
as a generalization of lexicographically optimal maximal flows
considered by Megiddo \cite{Meg74}.
(A lexicographically optimal base in \cite{Fuj80}
means an inc-max member of $B$,
and a lexicographically optimal maximal flow in \cite{Meg74}
means a maximum flow that is inc-max on the set of source-edges.)
Lexicographically optimal bases are discussed in detail in the
book of Fujishige \cite[Sections 8 and 9]{Fuj05book}.
Among others, two important features of base-polyhedra are discovered. 
The first is that the unique dec-min element of a base-polyhedron $B$ 
is the unique inc-max element of $B$, 
while the second is that the unique dec-min element of $B$ 
is the unique square-sum minimizer 
(that is, the minimum $\ell_{2}$-norm element) of $B$.  
These coincidences are surprising in the light that 
neither of the analogous statements hold
for the intersection of two base-polyhedra \cite{FM21partA}.  
Furthermore, the concept
of principal partition of the ground-set defined by
a base-polyhedron
\cite{Fuj09bonn,Iri79} plays a crucial role,
and the critical values associated with the partition
characterizes the lexicographically optimal base (see Section~\ref{SCprinpatR}).
Fujishige developed two algorithms for finding the dec-min element of
base-polyhedron $B$.  
His ``monotone algorithm'' 
\cite[Section~9.2]{Fuj05book}
is not polynomial in its original form but it can immediately be made
strongly polynomial with the aid of the Newton--Dinkelbach algorithm 
\cite{GGJ17,Rad13}.
The other algorithm 
\cite[Section~8.2]{Fuj05book}, 
called ``decomposition algorithm,'' 
is strongly polynomial and does not rely on Newton--Dinkelbach.
In game theory,  
the lexicographically optimal base
was discussed 
under the name of egalitarian allocation
by Dutta and Ray \cite{DR89}
with reference to the framework of majorization \cite{AS18,MOA11};
see also Dutta \cite{Dut90}.

The discrete version of the problem considers dec-min elements of 
the set of integral elements of an integral base-polyhedron,
which set is called an M-convex set in discrete convex analysis
\cite{Mdca98,Mdcasiam,Mbonn09};
the M-convex set arising from an integral base-polyhedron $B$
is denoted as $\odotZ{B}$.
Discrete decreasing minimization on an M-convex set
has been studied recently by the present authors
\cite{FM21partA,FM21partB}.
It was shown in \cite{FM21partA}
that there are interesting coincidences with the continuous case.  
For example, an element $z$ of an M-convex set $\odotZ{B}$ is
dec-min if and only if $z$ is inc-max and if and only if $z$ is a
square-sum minimizer.  
On the other hand, there are fundamental differences between 
the discrete and continuous cases.  
For example, it was shown in \cite{FM21partA}
that there may be several dec-min elements of an M-convex set $\odotZ{B}$, 
and the set of dec-min elements of $\odotZ{B}$ is itself an M-convex set, 
which can be obtained from a matroid by translating the set of characteristic
vectors of its bases by an integral vector.  
This property made it possible to solve algorithmically even 
the minimum cost dec-min problem for M-convex sets (a problem which makes no sense in the
continuous case where the dec-min element is unique).  
Furthermore, as a discrete counterpart of the principal partition, the concept of 
``canonical partition'' of the ground-set was introduced to
characterize the set of all dec-min elements of an M-convex set 
(see Section~\ref{SCcanopatZ}).
The subsequent paper \cite{FM21partB} presented
a strongly polynomial algorithm for computing a dec-min element of an M-convex set
together with the canonical partition,
and discussed applications to a variety of
problems including graph orientations \cite{Fra11book} and
resource allocation problems \cite{HLLT06,IK88,KSI13}.
This algorithm relied on a discrete variant of the Newton--Dinkelbach algorithm, 
and it may be considered a discrete counterpart 
of the ``monotone algorithm'' of Fujishige.  
While the dec-min problem on an M-convex set contains
the discrete version of  Megiddo's problem as a special case,
it does not capture an extension of Megiddo's problem 
in which we seek a feasible integral flow that is inc-max (or dec-min) 
on an arbitrarily specified set of edges.
This more general problem for network flows was investigated recently in \cite{FM22partC},
and a further generalization to submodular integral flows in \cite{FM20partD}.

The objective of this paper is to offer a complete picture
on the relationship 
between the continuous and discrete dec-min problems
on base-polyhedra
by establishing novel technical results and 
integrating known results from the literature. 
As such this paper is partly a research paper and partly a survey paper.

Our novel technical results include
a theorem that reveals a precise relation between the dual objects, namely, 
the principal partition of the ground-set in the continuous case 
and the canonical partition in the discrete case.
This theorem is used to prove proximity results,
asserting the geometric closeness of the
decreasingly minimal elements in the continuous and discrete cases.
The obtained proximity results, in turn, give rise to 
continuous relaxation algorithms
for computing a dec-min element of an M-convex set
in strongly polynomial time
using the (fractional) dec-min element of a base-polyhedron.

Another major topic of this paper
is concerned with decomposition-type algorithms 
for computing a dec-min element of an M-convex set.
The decomposition method was initiated by Fujishige \cite{Fuj80} 
for computing a dec-min element of a base-polyhedron,
or equivalently 
for minimizing a quadratic function on a base-polyhedron,
and was extended by Groenevelt \cite{Gro91}
to separable convex functions on a base-polyhedron
and also on the set of integral points of an integral base-polyhedron
 (namely, an M-convex set).
With a scrutiny of what is known in the literature
about these decomposition algorithms,
we give precise description of two decomposition algorithms
for an M-convex set in a self-contained manner,
one based on Groenevelt \cite{Gro91} and the other based on
Fujishige \cite[Section~8.2]{Fuj05book}.

The paper is organized as follows.
Major ingredients and properties 
of decreasing minimization on base-polyhedra and M-convex sets
are compared in Section~\ref{SCcompRZ}.
In Section~\ref{SCprincanopart}
we reveal the precise relation between 
the principal partition and the canonical partition 
through a novel characterization of the canonical partition.
Using these results,
we derive, in Section~\ref{SCproximity},
proximity results and a continuous relaxation algorithm 
for computing a dec-min element of an M-convex set.
Finally, in Section~\ref{SCdecalg}, 
we deal with decomposition algorithms 
to compute a dec-min element of an M-convex set.

\paragraph{Notation}

We basically follow notation in \cite{FM21partA}.
Let $S$ be a finite ground-set.
For a vector $x\in \RR\sp{S}$ or a function $x:S\rightarrow \RR$, 
we define the set-function 
$\widetilde x:2\sp{S}\rightarrow \RR$ by 
$\widetilde x(Z):=\sum [x(s):s\in Z]$ \ $(Z\subseteq S)$. 
The characteristic (or incidence) vector of a subset $Z \subseteq S$ is denoted by 
$\chi_{Z}$,  that is, 
$\chi_{Z}(s)=1$  if $s\in Z$ and 
$\chi_{Z}(s)=0$ otherwise.
The vector with all components 1 is denoted by $\bm{1}$,
while $\bm{0}$ is the zero vector.
For any real number $\alpha \in \RR$, 
$\lfloor \alpha \rfloor $ denotes the largest integer not larger than $\alpha$, 
and $\lceil \alpha \rceil$ the smallest integer not smaller than $\alpha$. 
This notation is extended to vectors
by componentwise applications.
For any integral polyhedron $P\subseteq \RR \sp{S}$, 
we use the notation $\odotZ{P}$ to denote the set of integral elements of $P$,
that is, 
$\odotZ{P} := P\cap \ZZ\sp{S}$, 
where $\odotZ{P}$ may be pronounced  ``dotted $P$.''  
The notation is intended to refer intuitively to the set of lattice points of $P$.

\section{Comparison of continuous and discrete cases}
\label{SCcompRZ}

In this section we compare the 
decreasing minimization problems on a base-polyhedron
and on an M-convex set 
in terms of various aspects.
Most of them are based on our present knowledge
from \cite{FM21partA,FM21partB,Fuj80,Fuj05book},
while some others serve as motivations 
for the investigations to be made in the present paper.
From these comparisons, it may safely be said 
that the discrete case is,
in spite of some important similarities,
significantly different from the continuous case,
being endowed with a number of intriguing combinatorial structures
on top of the geometric structures known in the continuous case. 
The continuous case is referred to as Case~$\RR$
and the discrete case as Case~$\ZZ$.
We use notation $m_{\RR}$ and $m_{\ZZ}$  
for a dec-min element in Case~$\RR$ and Case~$\ZZ$, respectively.

\subsection{Underlying set}

Let $b$ be a set-function on a ground-set $S$
with $b(\emptyset)=0$,
for which $b(X) =+\infty $ is allowed but
$b(X) = -\infty$ is not.  
We say that $b$ is {\embf submodular} (or fully submodular) if the submodular inequality 
\begin{equation} \label{submodineq}
b(X) + b(Y) \geq b(X\cap Y) + b(X\cup Y)
\end{equation}
holds for every pair of subsets $X, Y\subseteq S$ with finite $b$-values.
A set-function $p$ with $p(\emptyset)=0$,
for which $p(X) = -\infty $ is allowed but
$p(X) = + \infty$ is not, is said to be {\embf supermodular}
if $-p$ is submodular.

For a submodular set-function $b$ on $S$ 
with $b(S)$ finite,
the {\embf base-polyhedron} $B$ 
is defined by 
\begin{equation} \label{basepolysubmod}
B =B(b):=
 \{x\in \RR\sp{S}:  \widetilde x(S)=b(S), \ \widetilde x(Z)\leq b(Z)
  \ \mbox{ for every } \  Z\subset S\},
\end{equation}  
which is possibly unbounded
but never empty. 
The empty set, however, is also considered a base-polyhedron by convention.
When $b$ is integer-valued, $B(b)$ is an integral polyhedron,
which is referred to as {\embf integral base-polyhedron}.
A non-empty base-polyhedron $B$ can also be defined by a supermodular
function $p$ 
with $p(S)$ finite 
as follows:  
\begin{equation} \label{basepolysupermod}
B=B'(p):=
\{x\in \RR\sp{S}:  \widetilde x(S)=p(S), \ \widetilde x(Z)\geq p(Z) \ 
 \mbox{ for every } \ Z\subset S\}.
\end{equation}
We have $B'(p)=B(b)$
if $p$ is a complementary function of $b$,
that is, if  $p(X)=b(S)-b(S-X)$ for all $X \subseteq S$.

The set $\odotZ{B}$ 
of integral elements of an integral base-polyhedron $B$
is called an {\embf M-convex set}
in discrete convex analysis \cite{Mdca98,Mdcasiam,Mbonn09}.
To be more precise, 
an M-convex set
is defined as a set of integral points 
satisfying a certain exchange axiom, and it is
known that 
these two properties are equivalent (\cite[Theorem 4.15]{Mdcasiam}).  
It should be mentioned, however, that the set of integer elements
of an integral base-polyhedron itself 
has long been recognized as a combinatorially nice object
(although no particular name was coined). 
Indeed, this recognition is already evident in Edmonds' classic paper \cite{Edm70},
and Fujishige's book \cite{Fuj05book}
presents the theory of submodular systems
over an arbitrary totally additive group,
of which the set $\ZZ$ of integers is a special case.
It is noted that the set of integral elements of a non-integral base-polyhedron 
is not necessarily an M-convex set.

In Case~$\RR$ of the dec-min problem, we 
seek a dec-min element of a base-polyhedron $B$ 
described by a real-valued supermodular function $p$ or a submodular function $b$.
In Case~$\ZZ$, the dec-min problem is defined on an M-convex set
$\odotZ{B}$, which is the set of integral members of 
an integral base-polyhedron $B$
described by an integer-valued $p$ or $b$.

\subsection{Decreasing minimality and increasing maximality}

In Case~$\RR$ the terminology of 
``lexicographically optimal base''
(or ``lexico-optimal base'')
is used in \cite{Fuj80,Fuj05book}.
A lexico-optimal base is the same as an inc-max element 
in our terminology,
whereas a dec-min element is called a
``co-lexicographically optimal base'' in \cite{Fuj05book}.

In both Case~$\RR$ and Case~$\ZZ$,
decreasing minimality is equivalent to increasing maximality.

\begin{theorem}[\cite{Fuj80,Fuj05book}]  \label{THdecincR}
The unique decreasingly minimal element of $B$ 
is the unique increasingly maximal element of $B$.
\finboxARX
\end{theorem}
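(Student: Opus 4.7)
The plan is to use the strictly convex quadratic $\Phi(x) = \sum_{s \in S} x(s)^2$ as a bridge between decreasing minimality and increasing maximality on the base-polyhedron $B$. As a preliminary step, I would observe that (in the standard setting where $b$ is finite-valued) the combination of $\widetilde x(S) = b(S)$ with the inequalities $\widetilde x(Z) \leq b(Z)$ applied to $Z = \{s\}$ and $Z = S \setminus \{s\}$ forces $b(S) - b(S \setminus \{s\}) \leq x(s) \leq b(\{s\})$, so that $B$ is a nonempty compact convex set. On this compact set the strictly convex function $\Phi$ admits a unique minimizer $x^*$, and dec-min and inc-max elements likewise exist by continuity of the sorted-decreasing and sorted-increasing rearrangement maps.

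The central task is then to show that $x^*$ coincides with every dec-min and every inc-max element of $B$. The proof rests on the exchange property of base-polyhedra: for any $x \in B$ and $s, t \in S$ with $s \neq t$, the exchange capacity
\[
c(x; t, s) := \max\{\alpha \geq 0 : x + \alpha(\chi_{\{s\}} - \chi_{\{t\}}) \in B\}
\]
equals $\min\{b(Z) - \widetilde x(Z) : t \in Z \subseteq S \setminus \{s\}\}$, and in particular is strictly positive unless some tight set at $x$ separates $t$ from $s$. Given any $y \in B$ with $y \neq x^*$, I would exhibit a pair $s, t \in S$ satisfying $y(s) < y(t)$ and $c(y; t, s) > 0$. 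Once such a pair is in hand, choosing $\alpha > 0$ small enough that $y(s) + \alpha < y(t) - \alpha$ and that no other coordinate is overtaken in the sorted order, the point $y' = y + \alpha(\chi_{\{s\}} - \chi_{\{t\}}) \in B$ simultaneously satisfies $\Phi(y') < \Phi(y)$ (since the directional derivative is $2(y(s) - y(t)) < 0$), has a sorted-decreasing vector strictly lex-smaller than that of $y$ (the slot formerly at $y(t)$ drops while entries above remain unchanged), and, dually, has a sorted-increasing vector strictly lex-larger than that of $y$. Hence $y$ can be neither dec-min nor inc-max, forcing the (a priori possibly multiple) dec-min and inc-max elements of $B$ all to coincide with $x^*$.

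The principal technical obstacle lies in producing the unblocked exchange pair $(s, t)$ for every $y \neq x^*$. Naively selecting $s$ from $\{a : y(a) < x^*(a)\}$ and $t$ from $\{a : y(a) > x^*(a)\}$ (both nonempty since $\widetilde y(S) = \widetilde{x^*}(S)$) does not automatically give $y(s) < y(t)$, and even when it does, a tight set of $y$ may separate $t$ from $s$ and annul the exchange capacity. The resolution exploits the first-order optimality of $x^*$ together with the lattice structure of tight sets afforded by submodularity of $b$: one localises to a minimal tight block at $y$, inside which no smaller tight set separates two coordinates, and uses the first-order condition $\langle x^*, y - x^* \rangle \geq 0$ to secure the correct sign pattern $y(s) < y(t)$. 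A subsidiary but routine difficulty is the bookkeeping of ties among coordinates of $y$, which is handled by shrinking $\alpha$ below the smallest nonzero gap between adjacent sorted values.
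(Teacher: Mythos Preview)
The paper does not supply its own proof of this theorem; it is quoted from Fujishige and closed with a box. The route implicit in the surrounding material is through Theorem~\ref{THdecmincharR} (the equivalence (A)$\Leftrightarrow$(B)$\Leftrightarrow$(C)) together with Theorem~\ref{THdecminRsqsum}: an element with no improving exchange has all its level sets tight, is thereby uniquely determined, and is simultaneously dec-min, inc-max, and the square-sum minimizer.

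Your overall plan---using the unique $\Phi$-minimizer $x^{*}$ as the common target for both dec-min and inc-max---is exactly right, and the reduction ``if $y\neq x^{*}$ then an improving exchange exists'' is the correct contrapositive. But your sketched resolution of that step does not work as written, and this is the one place carrying all the weight. Two concrete issues. First, the exchange-capacity formula has $s$ and $t$ swapped: for $B=B(b)$ the binding constraints under the move $y+\alpha(\chi_{s}-\chi_{t})$ are those $Z$ with $s\in Z$, $t\notin Z$, so $c(y;t,s)=\min\{b(Z)-\widetilde y(Z): s\in Z,\ t\notin Z\}$. Second, and more seriously, the first-order condition you invoke, $\langle x^{*}, y-x^{*}\rangle\ge 0$, is a statement about $x^{*}$, not about $y$. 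It does yield (via $\|y-x^{*}\|^{2}>0$) that $x^{*}-y$ is a $\Phi$-descent direction at $y$, but decomposing $x^{*}-y$ into a positive combination of \emph{feasible} exchanges $\chi_{s_{i}}-\chi_{t_{i}}$ with $y(s_{i})<y(t_{i})$ is exactly what is in dispute; neither the sign pattern nor the feasibility follows from that inequality, and ``localise to a minimal tight block'' is not sharp enough to force either.

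The clean fix is the one behind Theorem~\ref{THdecmincharR}. Work in the contrapositive: assume $y$ satisfies~(B) and show $y=x^{*}$. From~(B) the lattice of $y$-tight sets (with respect to $p$) forces every level set $C_{i}=\{s:y(s)\ge\lambda_{i}\}$ to be $y$-tight (for $t\in C_{i}$, $s\notin C_{i}$ take the tight separator provided by~(B), then intersect over $s$ and union over $t$). With the tight chain in hand, an Abel summation gives $\langle y, z-y\rangle\ge 0$ for every $z\in B$, which is the first-order optimality of $y$ for $\Phi$, hence $y=x^{*}$. Equivalently, you may quote the standard fact that the tangent cone of a base-polyhedron at any point is generated by the feasible edge directions $\chi_{s}-\chi_{t}$; then local $\Phi$-optimality along those directions (which is precisely~(B)) already implies global $\Phi$-optimality. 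Either route replaces the ``minimal tight block'' step and makes no use of the first-order condition at $x^{*}$.
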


\begin{theorem}[{\cite[Theorem~3.3]{FM21partA}}]  \label{THdecincZ}
An element of an M-convex set $\odotZ{B}$ 
is decreasingly minimal in $\odotZ{B}$
if and only if 
it is increasingly maximal in $\odotZ{B}$.  
\finboxARX
\end{theorem}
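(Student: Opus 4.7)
The idea is to characterize both dec-min and inc-max elements of $\odotZ{B}$ by a single local exchange condition and then establish the chain of equivalences. Call $x \in \odotZ{B}$ \emph{locally balanced} (LP) if for every pair $s, t \in S$ with $x - \chi_{s} + \chi_{t} \in \odotZ{B}$ we have $x(s) \le x(t) + 1$. The target is dec-min $\Leftrightarrow$ LP $\Leftrightarrow$ inc-max, which immediately yields the theorem.

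For the necessity direction (dec-min $\Rightarrow$ LP and inc-max $\Rightarrow$ LP), suppose LP is violated by some $(s,t)$ with $x(s) \ge x(t) + 2$. Then $x' := x - \chi_{s} + \chi_{t} \in \odotZ{B}$ is obtained from $x$ by a Pigou--Dalton (``Robin Hood'') transfer. By the classical theory of majorization, $x'$ is strictly majorized by $x$, meaning the non-increasing rearrangement of $x'$ is lexicographically strictly smaller than that of $x$ and the non-decreasing rearrangement of $x'$ is lexicographically strictly larger. Consequently $x$ is neither decreasingly minimal nor increasingly maximal.

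For the sufficiency direction LP $\Rightarrow$ dec-min (the case LP $\Rightarrow$ inc-max being entirely symmetric), assume $x$ satisfies LP but some $y \in \odotZ{B}$ is strictly better than $x$ in the dec-min order, and choose such a $y$ minimizing $\|x - y\|_1$. Let $\alpha$ be the value at the first position where the non-increasing rearrangements of $x$ and $y$ disagree, so that $|\{u : x(u) \ge \alpha\}| > |\{u : y(u) \ge \alpha\}|$; pick $s$ with $x(s) \ge \alpha > y(s)$. The M-convex exchange axiom applied to $x$, $y$, and $s$ supplies $t$ with $y(t) > x(t)$ such that both $x - \chi_{s} + \chi_{t}$ and $y + \chi_{s} - \chi_{t}$ lie in $\odotZ{B}$. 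LP on $x$ then forces $x(t) \ge x(s) - 1 \ge \alpha - 1$, whence $y(t) \ge x(t) + 1 \ge \alpha$.

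Consequently $y' := y + \chi_{s} - \chi_{t}$ shifts one unit inside $y$ from a component of value $\ge \alpha$ to one of value $\le \alpha - 1$, which is a weak Pigou--Dalton transfer; so the non-increasing rearrangement of $y'$ is lexicographically no larger than that of $y$ and is therefore still strictly below that of $x$. But $\|x - y'\|_1 = \|x - y\|_1 - 2$, contradicting the minimality of $\|x - y\|_1$. Hence no such $y$ exists and $x$ is dec-min. The main obstacle is this sufficiency direction; the crucial point is that combining the exchange axiom with LP on $x$ automatically makes the induced transfer on $y$ (weakly) Robin Hood, which is exactly what keeps $y'$ inside the candidate class and enables the descent in the proximity $\|x - y\|_1$.
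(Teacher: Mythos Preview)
The paper itself does not prove this theorem; it is quoted from \cite{FM21partA} and presented here without proof (the statement ends with \finbox). So there is no in-paper argument to compare against directly. Your proposal is nonetheless correct and in fact matches the route implicit in the surrounding material: your locally-balanced condition~(LP) is exactly condition~(B) (``no 1-tightening step'') of Theorem~\ref{THdecmincharZ}, and the equivalence dec-min $\Leftrightarrow$ (B) stated there is precisely your dec-min $\Leftrightarrow$ LP. The symmetric claim inc-max $\Leftrightarrow$ LP is obtained by the same argument applied to $-\odotZ{B}$.

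Your sufficiency argument is clean and worth highlighting as a self-contained proof: the key move is to combine the \emph{symmetric} M-convex exchange axiom (applied at an index $s$ where $x(s)\ge\alpha>y(s)$) with LP on $x$, which forces $x(t)\ge\alpha-1$ and hence $y(t)\ge\alpha$. This guarantees that the exchange on $y$ is a (weak) Robin~Hood transfer, so the decreased $\ell_1$-distance vector $y'=y+\chi_s-\chi_t$ remains a strict improvement over $x$, yielding the descent contradiction. All the steps check: the cardinality inequality $|\{u:x(u)\ge\alpha\}|>|\{u:y(u)\ge\alpha\}|$ follows immediately from the choice of $\alpha$ as $x^{\downarrow}(k)$ at the first disagreement index $k$, so some $s$ with $x(s)\ge\alpha>y(s)$ exists; integrality gives $y(t)\ge x(t)+1\ge\alpha$; and $\|x-y'\|_1=\|x-y\|_1-2$ because both coordinates $s,t$ move strictly toward $x$.
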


\subsection{Characterizations}

Let $m$ be an element of an M-convex set $\odotZ{B}$.  
A {\embf 1-tightening step} replaces $m$ by $m':=m+\chi_{s}-\chi_{t}$,
where $s$ and $t$ are elements of $S$
for which $m(t)\geq m(s)+2$ and $m'$ belongs to $\odotZ{B}$.  
A subset $X\subseteq S$ is called {\embf $m$-tight}
(with respect to $p$)
if $\widetilde m(X)=p(X)$.  
A subset $X\subseteq S$ is called an {\embf $m$-top} set
if $m(s)\geq m(t)$ holds whenever $s \in X$ and $t \in S-X$.
(For example, for $m = (3,2,2)$ indexed by 
$S=\{ s_{1},s_{2},s_{3} \}$,
there are five $m$-top sets:
the empty set, 
$\{ s_{1} \}$, 
$\{ s_{1},s_{2} \}$,
$\{ s_{1},s_{3} \}$,
and $S$.)
We call an integral vector $x\in \ZZ\sp{S}$ 
{\embf near-uniform} on a subset $S'$ of $S$ if
its largest and smallest components on $S'$ differ by at most 1, that is, if
there exists some integer $\ell$ for which 
$x(s)\in \{\ell, \ell +1\}$ 
for every $s\in S'$.

The following theorem from \cite{FM21partA} gives 
fundamental characterizations of a dec-min element of an M-convex set.

\begin{theorem}[{\cite[Theorem~3.3]{FM21partA}}]  \label{THdecmincharZ}
For an element $m$ of an M-convex set 
$\odotZ{B} = \odotZ{B'}(p)$,
the following three conditions are pairwise equivalent.
\smallskip

\noindent 
{\rm (A)} \ $m$ is decreasingly minimal in $\odotZ{B}$.

\noindent
{\rm (B)} \ There is no 1-tightening step for $m$.
That is, 
\begin{equation} \label{decminoptexcZ0}
 m(t) \geq m(s) + 2
\ \Longrightarrow \ 
 m + \chi_{s} - \chi_{t} \notin \odotZ{B} .
\end{equation}

\noindent 
{\rm (C)} \ There is a chain 
$(\emptyset \subset) \ C_{1}\subset C_{2}\subset \cdots \subset C_{\ell} \ ( = S)$ 
such that 
each $C_{i}$ is an $m$-top and $m$-tight set (with respect to $p$) 
and $m$ is near-uniform on each 
$S_{i}:= C_{i}-C_{i-1}$ $(i=1,2,\dots ,\ell)$,
where $C_{0}:=\emptyset$.
\finboxARX
\end{theorem}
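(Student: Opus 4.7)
The plan is to prove the three-way equivalence via the cyclic chain (A) $\Rightarrow$ (B) $\Rightarrow$ (C) $\Rightarrow$ (A), working throughout with the supermodular description $\odotZ{B} = \odotZ{B'(p)}$. A preliminary dictionary I would extract from the defining inequalities of $B'(p)$ is that, for $m \in \odotZ{B}$ and $s \neq t$, the vector $m + \chi_{s} - \chi_{t}$ lies in $\odotZ{B}$ if and only if no $m$-tight set contains $t$ but not $s$; this translates condition (B) into: whenever $m(t) \geq m(s) + 2$, some $m$-tight set contains $t$ but not $s$.

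For (A) $\Rightarrow$ (B) I argue by contraposition. A 1-tightening step replaces the pair $(m(t), m(s))$ by $(m(t)-1, m(s)+1)$ subject to $m(t) \geq m(s) + 2$, and a direct inspection of the decreasing rearrangement shows the new vector is lex-strictly below $m$, contradicting dec-minimality.

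The bulk of the work is (B) $\Rightarrow$ (C), which I plan to handle by induction on $|S|$. The crux is to exhibit a nonempty $m$-top, $m$-tight subset $C_{1} \subseteq S$ on which $m$ is near-uniform; the inductive hypothesis applied to the contracted problem on $S - C_{1}$ would then supply the remainder of the chain. To construct $C_{1}$, let $\beta := \max_{s} m(s)$ and $T := \{s : m(s) = \beta\}$. If $\min_{s} m(s) \geq \beta - 1$, take $C_{1} := S$. Otherwise, for each $s$ with $m(s) \leq \beta - 2$ and each $t \in T$, condition (B) supplies an $m$-tight set containing $t$ but not $s$; since $m$-tight sets are closed under $\cup$ and $\cap$ (a standard consequence of supermodularity of $p$ together with $m \in B'(p)$), taking unions over $t \in T$ and then intersections over all deficient $s$ produces a single $m$-tight set $C_{1}$ that contains $T$ and avoids every $s$ with $m(s) \leq \beta - 2$. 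Hence $m$-values on $C_{1}$ lie in $\{\beta - 1, \beta\}$ while values outside are at most $\beta - 1$, so $C_{1}$ is simultaneously $m$-top and near-uniform. The inductive step then relies on verifying that $p'(Z) := p(Z \cup C_{1}) - p(C_{1})$ is a supermodular function on $S - C_{1}$ whose base-polyhedron contains $m|_{S - C_{1}}$, and that its $m|_{S - C_{1}}$-tight sets correspond bijectively to $m$-tight sets containing $C_{1}$, which immediately yields the inheritance of (B) to the smaller problem.

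For (C) $\Rightarrow$ (A) I would invoke a majorization argument. For any $y \in \odotZ{B}$ and each $i$, tightness of $C_{i}$ together with the $m$-top property gives $\sum_{k=1}^{|C_{i}|} y_{[k]} \geq \widetilde{y}(C_{i}) \geq p(C_{i}) = \widetilde{m}(C_{i}) = \sum_{k=1}^{|C_{i}|} m_{[k]}$, where $[\,\cdot\,]$ denotes the decreasing-rearrangement index. Since the near-uniformity of $m$ on each stratum $S_{i}$ makes the sorted $m$-vector piecewise constant (with at most two levels per stratum and breakpoints only at positions $|C_{i}|$), a short case analysis shows that a hypothetical $y \in \odotZ{B}$ dec-strictly below $m$ would force one of these inequalities to fail at the relevant index. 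I expect the main obstacle to be the simultaneous control of the $m$-top, $m$-tight, and near-uniform properties of $C_{1}$ in (B) $\Rightarrow$ (C), together with verifying that contracting by $C_{1}$ preserves (B); the other implications reduce to a direct lex comparison and a routine rearrangement computation.
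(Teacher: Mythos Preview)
The paper does not supply a proof of this theorem: it is quoted verbatim from \cite[Theorem~3.3]{FM21partA} and closed with \finbox, so there is nothing in the present paper to compare against. Your proposal must therefore be judged on its own, and it is correct.

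A few comments on the individual steps. Your ``dictionary'' is right: for $m\in\odotZ{B'(p)}$ the move $m+\chi_{s}-\chi_{t}$ leaves $\odotZ{B}$ precisely when some $m$-tight set separates $t$ from $s$, since the only constraint $\widetilde x(X)\geq p(X)$ that can become violated is one that was already tight and contains $t$ but not $s$. The implication (A)$\Rightarrow$(B) is the obvious lex comparison. In (B)$\Rightarrow$(C), your construction of $C_{1}$ via unions over $t\in T$ and intersections over the deficient $s$ works because $m$-tight sets form a ring family; the resulting $C_{1}$ contains $T$, excludes every $s$ with $m(s)\leq\beta-2$, and is therefore both $m$-top and near-uniform. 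The inheritance of (B) to the contraction by $C_{1}$ follows since for any $m$-tight $X$ the set $X\cup C_{1}$ is again $m$-tight and corresponds to an $m|_{S-C_{1}}$-tight set for $p'$.

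For (C)$\Rightarrow$(A), your partial-sum inequality $\sum_{k\leq|C_{i}|}y_{[k]}\geq\sum_{k\leq|C_{i}|}m_{[k]}$ is correct, and the ``short case analysis'' you allude to can be made precise as follows: if $j$ is the first index with $y_{[j]}<m_{[j]}$ and $|C_{i-1}|<j\leq|C_{i}|$, then near-uniformity on $S_{i}$ gives $m_{[k]}\geq a_{i}$ for all $k\leq|C_{i}|$ (where $a_{i}$ is the lower of the two values on $S_{i}$), while integrality forces $y_{[k]}\leq y_{[j]}\leq a_{i}$ for $k\geq j$, with strict inequality at $k=j$; summing over $j\leq k\leq|C_{i}|$ contradicts the partial-sum inequality. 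This is in fact the standard route to the result, and is essentially the argument given in the cited source.
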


The corresponding theorem for Case~$\RR$ can be formulated
from known results \cite{Fuj80,Fuj05book}
as follows, where a direct proof can also be obtained from an easy
adaptation of the proof of 
\cite[Theorem~3.3]{FM21partA}.
For an element $m$ of a base-polyhedron $B =B'(p)$
defined by a real-valued supermodular function $p$,  
a subset $X\subseteq S$ is called {$m$-tight}
(with respect to $p$)
if $\widetilde m(X)=p(X)$,
and an {$m$-top} set if $m(s)\geq m(t)$ holds whenever $s \in X$ and $t \in S-X$.
We call a vector $x\in \RR\sp{S}$ 
{\embf uniform} on a subset $S'$ of $S$ if
$x(s) = x(t)$ for all $s, t \in S'$.

\begin{theorem} \label{THdecmincharR}
For an element $m$ of a base-polyhedron $B = B'(p)$,
the following three conditions are pairwise equivalent.
\smallskip

\noindent 
{\rm (A)} \ $m$ is decreasingly minimal in $B$.

\noindent
{\rm (B)} \ 
$m$ satisfies the following condition:
\begin{equation} \label{decminoptexcR0}
 m(t) > m(s), \ \alpha > 0 
\ \Longrightarrow \ 
 m + \alpha( \chi_{s} - \chi_{t} ) \notin B .
\end{equation}

\noindent 
{\rm (C)} \ There is a chain 
$(\emptyset \subset) \ C_{1}\subset C_{2}\subset \cdots \subset C_{\ell} \ ( = S)$ 
such that 
each $C_{i}$ is an $m$-top and $m$-tight set (with respect to $p$) 
and $m$ is uniform on each 
$S_{i}:= C_{i}-C_{i-1}$ $(i=1,2,\dots ,\ell)$,
where $C_{0}:=\emptyset$.
\finboxARX
\end{theorem}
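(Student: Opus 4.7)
The plan is to prove the three-way equivalence by establishing the cyclic implications (A)$\Rightarrow$(B)$\Rightarrow$(C)$\Rightarrow$(A), in close parallel with the proof of Theorem~\ref{THdecmincharZ}, with the discrete unit shift $\chi_{s}-\chi_{t}$ there replaced by the continuously scaled shift $\alpha(\chi_{s}-\chi_{t})$ here. The two key base-polyhedron tools I will invoke are: (i) the characterization that $m+\alpha(\chi_{s}-\chi_{t})\in B$ holds for some $\alpha>0$ if and only if no $m$-tight set contains $t$ and excludes $s$ (derived directly from the supermodular inequalities defining $B'(p)$), and (ii) the closure of the family of $m$-tight sets under both intersection and union, which follows from the supermodularity of $p$ together with $\widetilde m(Z)\geq p(Z)$ for all $Z\subseteq S$.

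For (A)$\Rightarrow$(B): if (B) failed, there would exist $\alpha>0$ with $m(t)>m(s)$ and $m':=m+\alpha(\chi_{s}-\chi_{t})\in B$. Shrinking $\alpha$ so that $m'(t)\geq m'(s)$ still holds, the multisets of coordinate values of $m$ and $m'$ differ only by replacing $m(t)$ with the strictly smaller value $m(t)-\alpha$ and $m(s)$ with $m(s)+\alpha\leq m(t)-\alpha$; hence the sorted-decreasing vector strictly drops at the last position originally holding $m(t)$, contradicting dec-minimality.
For (C)$\Rightarrow$(A): let $\mu_{i}$ denote the common value of $m$ on $S_{i}$, so that $\mu_{1}>\mu_{2}>\cdots>\mu_{\ell}$ after merging consecutive equal blocks if needed. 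Suppose for contradiction that some $m'\in B$ is strictly lex-smaller than $m$ in the decreasing sort, and let $j^{*}$ be the first coordinate where $(m')^{\downarrow}$ and $m^{\downarrow}$ differ; locate $j^{*}$ in $S_{i}$, i.e., $|C_{i-1}|<j^{*}\leq|C_{i}|$. Then $(m')^{\downarrow}_{j}<\mu_{i}$ for every $j\geq j^{*}$, so summing through index $|C_{i}|$ and using the agreement of partial sums up to $j^{*}-1$ yields $\sum_{j=1}^{|C_{i}|}(m')^{\downarrow}_{j}<\widetilde m(C_{i})=p(C_{i})$, which contradicts the chain of inequalities $\sum_{j=1}^{|C_{i}|}(m')^{\downarrow}_{j}\geq\widetilde{m'}(C_{i})\geq p(C_{i})$ following from $m'\in B'(p)$.

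The core of the argument, and the step I expect to be the main technical obstacle, is the implication (B)$\Rightarrow$(C). Let $\mu_{1}>\mu_{2}>\cdots>\mu_{\ell}$ be the distinct values of $m$, set $S_{i}:=\{s\in S:m(s)=\mu_{i}\}$, and define $C_{i}:=S_{1}\cup\cdots\cup S_{i}$. Then the chain $C_{1}\subset\cdots\subset C_{\ell}=S$ is strictly increasing, each $C_{i}$ is automatically an $m$-top set, and $m$ is uniform on each $S_{i}$ by construction; all that remains is to verify $m$-tightness of every $C_{i}$. Fix $i$; for each pair $(t,s)$ with $t\in C_{i}$ and $s\in S-C_{i}$ we have $m(t)>m(s)$, so (B) rules out any positive shift from $t$ to $s$, and by tool (i) there exists an $m$-tight set $X_{t,s}$ with $t\in X_{t,s}$ and $s\notin X_{t,s}$. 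Intersecting over $s\in S-C_{i}$ (intersection-closure from tool (ii)) produces an $m$-tight set $T_{t}\subseteq C_{i}$ containing $t$, and taking the union $\bigcup_{t\in C_{i}} T_{t}=C_{i}$ (union-closure from tool (ii)) exhibits $C_{i}$ itself as an $m$-tight set, completing the argument.
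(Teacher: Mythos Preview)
Your proof is correct and follows precisely the route the paper itself suggests: the paper does not spell out a proof of Theorem~\ref{THdecmincharR} but notes that ``a direct proof can also be obtained from an easy adaptation of the proof of \cite[Theorem~3.3]{FM21partA},'' and your cyclic argument (A)$\Rightarrow$(B)$\Rightarrow$(C)$\Rightarrow$(A) is exactly that adaptation, with the exchange and tight-set lattice tools handled correctly. The only cosmetic point is that the edge case $C_{\ell}=S$ in your (B)$\Rightarrow$(C) step (where $S-C_{i}=\emptyset$ and the intersection is vacuous) is covered automatically since $S$ is always $m$-tight for $m\in B'(p)$.
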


\subsection{Uniqueness}
\label{SCcmparUni}
The structures of dec-min elements
have a striking difference in Case~$\RR$ and Case~$\ZZ$. 
In Case~$\RR$ the dec-min element $m_{\RR}$ of $B$
is uniquely determined.
(The dec-min element, if any, is uniquely determined in an arbitrary polyhedron.) \ 
In Case~$\ZZ$ 
the dec-min elements $m_{\ZZ}$ of $\odotZ{B}$
are endowed with the structure of basis family of a matroid.
This is stated in Theorem~\ref{THmatroideltolt} below,
where a {\embf matroidal M-convex set} means an M-convex set
in which the $\ell_{\infty}$-distance of any two distinct members is equal to one.

\begin{theorem}[{\cite[Theorem~5.7]{FM21partA}}] \label{THmatroideltolt}
The set of dec-min elements of an M-convex set $\odotZ{B}$ is a matroidal M-convex set.
In other words, there exist 
a matroid $M\sp{*}$ and an integral vector $\Delta \sp{*}$
such that
an element $m$ of $\odotZ{B}$ is decreasingly minimal
if and only if $m$ can be obtained as $m=\chi_L+ \Delta \sp{*}$ 
with a basis $L$ of $M\sp{*}$.
\finboxARX
\end{theorem}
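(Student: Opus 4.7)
The plan is to extract the structural data from the chain characterization of Theorem~\ref{THdecmincharZ}(C) and use it to define both $\Delta^{*}$ and the matroid $M^{*}$. I fix one dec-min element $m^{\circ}$ and let $\emptyset \subset C_1 \subset \cdots \subset C_\ell = S$ be its associated chain, with blocks $S_i := C_i - C_{i-1}$ and base values $\ell_i$ so that $m^{\circ}(s) \in \{\ell_i, \ell_i+1\}$ on $S_i$. Set $\Delta^{*}(s) := \ell_i$ for $s \in S_i$. The aim is to show that every dec-min element $m$ of $\odotZ{B}$ satisfies $m - \Delta^{*} \in \{0,1\}^{S}$, and that the resulting $\{0,1\}$-vectors are exactly the characteristic vectors of the bases of a matroid $M^{*}$ on $S$.

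The first step is to establish invariance of the blocks and base values across all dec-min elements. Any two dec-min elements of $\odotZ{B}$ have identical sorted-decreasing vectors, so $m$ and $m^{\circ}$ share the same multiset of coordinate values and the same total $\widetilde{m}(S) = p(S)$. For each $C_i$, the supermodular bound gives $\widetilde{m}(C_i) \geq p(C_i) = \widetilde{m^{\circ}}(C_i)$, whereas $\widetilde{m}(C_i)$ is at most the sum of the top $|C_i|$ coordinates of $m$, which by sorted-vector coincidence equals the sum of the top $|C_i|$ coordinates of $m^{\circ}$, which by the $m^{\circ}$-top property equals $\widetilde{m^{\circ}}(C_i)$. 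This pinches $\widetilde{m}(C_i) = p(C_i)$, so $C_i$ is both $m$-tight and $m$-top. Consequently the values of $m$ on $S_i$ are the $(|C_{i-1}|+1)$-th through $|C_i|$-th largest coordinates of its sorted vector, which (again by sorted-vector coincidence) lie in $\{\ell_i, \ell_i+1\}$ with the same multiplicities as for $m^{\circ}$. Writing $L_m := \{s : m(s) = \Delta^{*}(s)+1\}$, this shows $m = \chi_{L_m} + \Delta^{*}$ and that $|L_m \cap S_i|$ is a constant $k_i$ independent of the dec-min element.

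It remains to prove that the family $\mathcal{L}$ of all such $L_m$ is the basis set of a matroid, for which I verify the basis exchange axiom. Given dec-min elements $m, m'$ and $s \in L_m \setminus L_{m'}$ with $s \in S_i$, the simultaneous M-convex exchange axiom for $\odotZ{B}$ applied to $(m, m', s)$ yields some $v$ with $m(v) < m'(v)$, hence $v \in L_{m'} \setminus L_m$ with $v \in S_k$ for some $k$, such that both $m - \chi_s + \chi_v$ and $m' + \chi_s - \chi_v$ lie in $\odotZ{B}$. The main obstacle is to show $k = i$, so that the exchange respects the block partition; without this, the exchanged vector might cross block boundaries and fail to be dec-min. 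This is forced by the $p$-tightness of every $C_j$ for both $m$ and $m'$: if $k > i$, then for any $j$ with $i \leq j < k$ we have $s \in C_j$ and $v \notin C_j$, so the sum of $m - \chi_s + \chi_v$ over $C_j$ equals $p(C_j) - 1$, violating the supermodular lower bound; symmetrically, the case $k < i$ yields the analogous contradiction applied to $m' + \chi_s - \chi_v$ over $C_j$ with $k \leq j < i$. Hence $v \in S_i$, and $m - \chi_s + \chi_v$ preserves block-wise near-uniformity and the $p$-tightness of every $C_j$, so it is again dec-min by Theorem~\ref{THdecmincharZ}(C). This establishes the matroid basis exchange axiom for $\mathcal{L}$, yielding the matroid $M^{*}$ and completing the proof.
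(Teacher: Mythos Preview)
The paper does not contain its own proof of this theorem; it is quoted from \cite[Theorem~5.7]{FM21partA} and closed with \finbox\ immediately after the statement. So there is nothing in this paper to compare your argument against directly. That said, your proof is correct and self-contained.

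From the hints the paper gives elsewhere (see the proof of Theorem~\ref{THmnormconvcombdmS}), the route taken in \cite{FM21partA} is polyhedral: one uses the \emph{canonical} chain and the associated essential values $\beta_j$ to form the face $B^{\oplus}$ of $B$ cut out by the equalities $\widetilde{x}(C_j)=p(C_j)$, intersects it with the box $T^{*}=\{x: \beta_j-1\le x(s)\le\beta_j \text{ for } s\in S_j\}$, and shows that the dec-min elements are exactly the integral points of the resulting integral base-polyhedron $B^{\bullet}=B^{\oplus}\cap T^{*}$; matroidality then drops out because $B^{\bullet}$ sits in a unit box. Your argument is genuinely different in two ways. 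First, you work with an \emph{arbitrary} certifying chain from Theorem~\ref{THdecmincharZ}(C) rather than the canonical one, and you recover its universality across all dec-min elements by the clean sorted-vector squeeze $p(C_i)\le\widetilde m(C_i)\le(\text{top }|C_i|\text{ sum of }m)=(\text{top }|C_i|\text{ sum of }m^{\circ})=\widetilde{m^{\circ}}(C_i)=p(C_i)$. Second, instead of invoking the polyhedral description you verify the basis exchange axiom directly from the simultaneous M-convex exchange, using tightness of the $C_j$ to force the exchanged element into the same block. What your approach buys is independence from the canonical-partition machinery and a short, purely combinatorial argument; what the polyhedral route buys is an explicit description of the matroid $M^{*}$ as the integer points of a named base-polyhedron, which the paper then exploits (e.g.\ in Theorem~\ref{THmnormconvcombdmS} and in the continuous relaxation algorithm of Section~\ref{SCalgRZnorm}).
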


The minimum cost dec-min element problem is to compute
a dec-min element that has the smallest cost
with respect to a given cost-function
$c: S \to \RR$ on the ground-set $S$.
In Case~$\ZZ$, this problem is meaningful and interesting,
and  was solved in \cite[Section~5.3]{FM21partA}
on the basis of Theorem~\ref{THmatroideltolt}; see 
\cite{FM21partB} for its instances in graph orientation problems.
In Case~$\RR$, in contrast, this problem
does not make any sense because of the uniqueness
of the dec-min element of $B$.

\subsection{Square-sum minimization}
\label{SCcmparSqsm}
In both Case~$\RR$ and Case~$\ZZ$,
a dec-min element is characterized as a minimizer of
square-sum of the components:
\begin{equation}  \label{Wxdef}
W(x) := \sum [x(s)\sp{2}:  s\in S].
\end{equation}

\begin{theorem}[{\cite[Theorem~3.3]{Fuj80}}]  \label{THdecminRsqsum} 
An element $m$ of a base-polyhedron $B$ is a square-sum minimizer on $B$
if and only if 
$m$ is a dec-min element of $B$.  
\finboxARX
\end{theorem}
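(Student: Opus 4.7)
The plan is to prove the equivalence by combining the uniqueness of the dec-min element (Theorem~\ref{THdecincR}) with the uniqueness of the square-sum minimizer (which follows from strict convexity), bridging the two via the characterization in Theorem~\ref{THdecmincharR}. In fact the two directions collapse into a single statement: the unique dec-min element of $B$ coincides with the unique square-sum minimizer of $B$.

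First I would argue existence and uniqueness of a square-sum minimizer on $B$. The function $W$ is strictly convex and coercive on $\RR\sp{S}$ (a sublevel set $\{x : W(x)\le M\}$ is contained in the box $[-\sqrt{M},\sqrt{M}]\sp{S}$), while $B$ is a nonempty closed convex set as a base-polyhedron. Hence $W$ attains its minimum on $B$ at a unique point, which I denote $m\sp{*}$.

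Next I would show that $m\sp{*}$ satisfies condition~(B) of Theorem~\ref{THdecmincharR}: there are no $s,t\in S$ and $\alpha>0$ with $m\sp{*}(t)>m\sp{*}(s)$ and $m\sp{*}+\alpha(\chi_{s}-\chi_{t})\in B$. If such $s,t,\alpha$ existed, then by convexity of $B$ the whole segment $m\sp{*}+\alpha'(\chi_{s}-\chi_{t})$ for $\alpha'\in[0,\alpha]$ lies in $B$, and a direct computation gives
\begin{equation*}
W(m\sp{*}+\alpha'(\chi_{s}-\chi_{t})) - W(m\sp{*}) = 2\alpha'(m\sp{*}(s)-m\sp{*}(t)) + 2{\alpha'}\sp{2},
\end{equation*}
which is negative for all sufficiently small $\alpha'>0$ since $m\sp{*}(s)-m\sp{*}(t)<0$. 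This contradicts the minimality of $m\sp{*}$. Hence (B) holds, and the equivalence (A)$\Leftrightarrow$(B) in Theorem~\ref{THdecmincharR} yields that $m\sp{*}$ is dec-min. Combining with the uniqueness of the dec-min element (Theorem~\ref{THdecincR}) finishes the proof in both directions.

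The main obstacle I anticipate is that $B$ may be unbounded, which could complicate the existence claim for the square-sum minimizer; but coercivity of $W$ on $\RR\sp{S}$ resolves this cleanly via the usual Weierstrass-type argument on a bounded sublevel set. An alternative route would work directly from characterization~(C) of Theorem~\ref{THdecmincharR}, using the chain of $m$-tight and $m$-top sets together with a rearrangement-type inequality on each block $S_{i}$ on which $m$ is uniform, to compare $W(m)$ with $W(x)$ for an arbitrary $x\in B$; this is more constructive but technically heavier than the uniqueness-based argument sketched above, so I would prefer the latter.
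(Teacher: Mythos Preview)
Your argument is correct. The existence/uniqueness of the square-sum minimizer via strict convexity and coercivity is clean, the derivative computation
\[
W(m\sp{*}+\alpha'(\chi_{s}-\chi_{t})) - W(m\sp{*}) = 2\alpha'(m\sp{*}(s)-m\sp{*}(t)) + 2{\alpha'}\sp{2}
\]
is right, and the conclusion via condition~(B) of Theorem~\ref{THdecmincharR} together with the uniqueness in Theorem~\ref{THdecincR} closes both directions simultaneously, exactly as you say.

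As for comparison with the paper: there is nothing to compare. The paper does not prove Theorem~\ref{THdecminRsqsum}; it is stated as a quotation from Fujishige~\cite{Fuj80} and marked with \finbox\ without proof. Your proof is therefore a self-contained addition rather than an alternative to anything in the text. It is worth noting that you are relying on Theorem~\ref{THdecmincharR}, which the paper likewise states without proof (attributing it to an easy adaptation of the discrete argument); within the paper's internal logic that is perfectly acceptable, but if you wanted a proof fully independent of cited material you would need to supply the (A)$\Leftrightarrow$(B) equivalence as well, which is itself a short argument along the same lines.
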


\begin{theorem}[{\cite[Corollary~6.4]{FM21partA}}]  \label{THdecminZsqsum} 
An element $m$ of an M-convex set $\odotZ{B}$
is a square-sum minimizer on $\odotZ{B}$
if and only if 
$m$ is a dec-min element of $\odotZ{B}$.  
\finboxARX
\end{theorem}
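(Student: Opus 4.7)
The plan is to use characterization (B) of Theorem~\ref{THdecmincharZ} (no 1-tightening step) to reduce the equivalence to a single one-line identity plus the standard local-to-global principle for separable convex minimization on an M-convex set. The basic computation is that for any $s, t \in S$ with $m + \chi_{s} - \chi_{t} \in \odotZ{B}$,
\begin{equation*}
W(m + \chi_{s} - \chi_{t}) - W(m)
 = (m(s)+1)^{2} + (m(t)-1)^{2} - m(s)^{2} - m(t)^{2}
 = 2(m(s) - m(t)) + 2 ,
\end{equation*}
which is strictly negative precisely when $m(t) \geq m(s) + 2$, and nonnegative otherwise.

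For the direction ``square-sum minimizer $\Rightarrow$ dec-min,'' I would assume $m \in \odotZ{B}$ minimizes $W$ and argue by contradiction. If some pair $s, t$ satisfied $m(t) \geq m(s) + 2$ together with $m + \chi_{s} - \chi_{t} \in \odotZ{B}$, the identity above would produce an element of $\odotZ{B}$ with strictly smaller square-sum, contradicting minimality of $W(m)$. Hence condition (B) of Theorem~\ref{THdecmincharZ} holds, and $m$ is decreasingly minimal.

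For the converse, assume $m$ is dec-min, so (B) holds. The identity then shows $W(m + \chi_{s} - \chi_{t}) \geq W(m)$ for every feasible unit exchange, i.e.\ $m$ is a local minimizer of the separable convex function $W$ on $\odotZ{B}$ with respect to unit exchanges $\chi_{s} - \chi_{t}$. The local-to-global optimality principle for separable convex minimization over an M-convex set, a fundamental result of discrete convex analysis \cite{Mdcasiam}, then upgrades this to global minimality of $W$ at $m$.

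The principal obstacle is the converse direction, whose nontrivial content is the local-to-global principle. If one prefers a self-contained argument, the strategy is as follows: pick an arbitrary $m' \in \odotZ{B}$ and repeatedly apply the simultaneous exchange axiom for M-convex sets to select pairs $(s,t)$ with $m(s) > m'(s)$, $m(t) < m'(t)$, and $m - \chi_{s} + \chi_{t} \in \odotZ{B}$, transforming $m$ into $m'$ through a sequence of feasible unit exchanges. Telescoping, $W(m') - W(m)$ equals the sum of the individual per-step $W$-differences, each of which is nonnegative by (B); hence $W(m') \geq W(m)$, so $m$ is a global square-sum minimizer. The only delicate point in this alternative proof is verifying that the chosen pairs do not violate (B), which follows from the sign condition $m(t) \leq m(s) + 1$ guaranteed at each step for pairs taken from the symmetric-difference support of $m$ and $m'$.
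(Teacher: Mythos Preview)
Your main argument is correct: the forward direction is an immediate computation, and for the converse you correctly invoke the local-to-global optimality criterion for M-convex function minimization (\cite[Theorem~6.26]{Mdcasiam}), since $W$ restricted to $\odotZ{B}$ is M-convex. The paper itself gives no proof of this theorem---it is quoted from \cite{FM21partA}---so there is nothing to compare against beyond noting that your route via the M-convex local optimality criterion is standard and sound.

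Your ``self-contained'' alternative, however, has a genuine gap. Condition (B) of Theorem~\ref{THdecmincharZ} is a statement about the single point $m$: it rules out 1-tightening steps \emph{from $m$}. Once you move to an intermediate point $m_{1}=m-\chi_{s}+\chi_{t}$, you no longer know that (B) holds at $m_{1}$, so you cannot conclude that the second step has nonnegative $W$-difference ``by (B).'' The per-step increment when moving $\tilde m \to \tilde m - \chi_{s}+\chi_{t}$ equals $2(\tilde m(t)-\tilde m(s))+2$, and nonnegativity requires $\tilde m(s)\leq \tilde m(t)+1$; merely knowing $\tilde m(s)>m'(s)$ and $\tilde m(t)<m'(t)$ does not give this. (Your stated ``sign condition $m(t)\leq m(s)+1$'' is in any case the wrong inequality.) The actual proof of the local-to-global principle for M-convex functions does not proceed by a naive telescoping path; it uses a more careful exchange argument. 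So either keep the citation to \cite{Mdcasiam} and drop the sketch, or replace the sketch with the genuine M-convex local-to-global argument.
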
  

In Case~$\RR$, the variable $x$ for minimization is a real vector, $x \in \RR\sp{S}$,
whereas in Case~$\ZZ$ it is an integer vector,  $x \in \ZZ\sp{S}$.
In Case~$\RR$, the minimizer of $W(x)$ over $B$ is unique, 
and is often referred to as the {\embf minimum norm point} of $B$.
(Actually, the minimizer of $W(x)$ is unique for an arbitrary polyhedron,
and is a rational vector for  a rational polyhedron.
However, the minimum norm point may not be the dec-min element.)

In both Case~$\RR$ and Case~$\ZZ$, there are min-max formulas for the square-sum.
The min-max formulas refer to the well-known concept of 
linear extension (or Lov\'asz extension) $\hat p$ of $p$,
which is defined for $\pi \in \RR\sp{S}$ by
\begin{equation}  \label{lovextdef}
\hat p(\pi) := p(S_{n})\pi(s_{n}) + \sum_{j=1}\sp {n-1} p(S_{j})[\pi(s_{j}) - \pi(s_{j+1})],
\end{equation}
where $n= | S |$, the elements of $S$ are indexed in
such a way that 
$\pi(s_{1})\geq \pi(s_{2})\geq  \cdots \geq \pi(s_n)$, and 
$S_{j}:=\{ s_{1}, s_{2},\dots ,s_{j}\}$ for $j=1,2,\dots ,n$.  
Here $p(S_{j})[\pi(s_{j}) - \pi(s_{j+1})]$ is defined to be $0$ when 
$\pi(s_{j}) - \pi(s_{j+1})=0$ even if 
$p(S_{j}) = -\infty$.
In Case~$\ZZ$ we have the min-max identity
\cite[Theorem 6.10]{FM21partA}:

\begin{align} 
& \min \{ \sum [m(s)\sp{2}:  s\in S]:  m\in \odotZ{B} \} 
\nonumber \\ &
= \max \{\hat p(\pi ) - \sum_{s\in S} 
 \left\lfloor \frac{\pi(s)}{2} \right\rfloor 
 \left\lceil  \frac{\pi(s)}{2} \right\rceil : 
 \pi \in \ZZ\sp S \} .
\label{minmaxSqSum-KM3}
\end{align}
In Case~$\RR$, the corresponding formula is
\begin{align} 
& \min \{ \sum [m(s)\sp{2}:  s\in S]:  m\in B \} 
\nonumber \\ &
= \max \{\hat p(\pi) - \sum_{s\in S} 
 \left( \frac{\pi(s)}{2} \right)\sp{2} : \pi \in \RR \sp S \} ,
\label{minmaxsqrsumR}
\end{align}
which may be regarded as an adaptation of the standard quadratic programming duality
to the case where the feasible region is a base-polyhedron.

\subsection{Principal and canonical partitions}

In either of the continuous and discrete problems,
a certain partition of the ground-set $S$
is  known to play an essential role 
as the dual object 
to characterize dec-min elements.
In Case~$\RR$, the partition is called the principal partition,
which 
is used to characterize the (unique) dec-min element $m_{\RR}$ of $B$.
In Case~$\ZZ$, another partition of $S$,
called the canonical partition,
characterizes the set of all dec-min elements $m_{\ZZ}$ of $\odotZ{B}$.
So far these two notions are introduced independently
and nothing is known about their mutual relation.
In Section~\ref{SCprincanopart}, we shall reveal
the precise relation between these partitions
as well as the associated numbers
called critical values and essential values.

\subsection{Proximity}
In general terms, proximity results refer to statements
that the solutions to discrete and continuous versions
of an optimization problem are 
geometrically close to each other.
In Section~\ref{SCproximity} we shall 
obtain proximity results,
showing that dec-min elements $m_{\ZZ}$ of $\odotZ{B}$ 
are located near the dec-min element $m_{\RR}$ of $B$.
The obtained proximity results, in turn, give rise to 
continuous relaxation algorithms
for computing a dec-min element $m_{\ZZ}$ of an M-convex set
in strongly polynomial time 
when the 
(fractional) dec-min element $m_{\RR}$ of a base-polyhedron is given.

\subsection{Algorithm}
\label{SCcmparAlg}

In Case~$\RR$, the decomposition algorithm of Fujishige \cite{Fuj80} in 1980
was already a strongly polynomial algorithm to find the (unique) dec-min element.
This decomposition algorithm 
for computing the dec-min element,
or equivalently 
for minimizing a quadratic function on a base-polyhedron,
was extended by Groenevelt \cite{Gro91}
to separable convex functions on a polymatroid.

Fujishige gave, in his book \cite{Fuj05book}, 
two algorithms for finding the dec-min element for Case~$\RR$. 
The first algorithm  
called ``decomposition algorithm''
\cite[Section~8.2]{Fuj05book}
is not completely the same as, but a variant of, 
the (original) decomposition algorithm of \cite{Fuj80}.
This variant 
is also strongly polynomial.
The other algorithm called ``monotone algorithm'' 
\cite[Section~9.2]{Fuj05book}
is a procedure that computes iteratively 
the members of the principal partition belonging to $B$, 
as well as the critical values (which define the dec-min element $m_{\RR}$ immediately).  
The algorithm is quite simple and natural but it relies
on a subroutine for computing a certain number $\lambda \sp{*}$ which
is, in the present context, equivalent to computing 
$\max \{ p(X)/ |X|:  \emptyset \neq X \subseteq S \}$.  
Though not mentioned explicitly in \cite{Fuj05book}, 
this $\lambda \sp{*}$ can be computed in strongly polynomial time 
with the help of the Newton--Dinkelbach algorithm,
which follows, for example, from a result of Iwata et al.~\cite{IMS97strong}
for a more general problem. 
See Radzik \cite{Rad13} and Goemans et al.~\cite{GGJ17}
for the Newton--Dinkelbach algorithm.

For Case~$\ZZ$,  a strongly polynomial algorithm 
was devised in \cite{FM21partB}.
The algorithm in \cite{FM21partB} relies on a discrete version of the
Newton--Dinkelbach algorithm, and may be viewed 
as a discrete counterpart of Fujishige's monotone algorithm.  
In particular, the algorithm of \cite{FM21partB} computes iteratively 
the canonical chain and partition belonging to $\odotZ{B}$, 
along with the essential value-sequence and a dec-min element itself.  
It is emphasized that the canonical chain and the essential value-sequence 
provide us with 
a structural description (Theorem~\ref{THmatroideltolt}) 
of the set of all dec-min elements of $\odotZ{B}$,
which, in turn, enables us to compute a minimum cost dec-min element of $\odotZ{B}$ 
with respect to a linear cost-function.
It is noted here that, if a single dec-min element of $\odotZ{B}$ is available, 
the canonical chain and the essential value-sequence 
can be computed quite easily \cite[Algorithm 2.3]{FM21partB}.

By the equivalence of dec-minimization and square-sum minimization
(Theorem~\ref{THdecminZsqsum}),
any algorithm for square-sum minimization solves the dec-min problem.
Groenevelt's decomposition algorithm \cite{Gro91}  
for separable convex minimization 
on the set of integral points of an integral polymatroid
can be adapted to minimizing the square-sum on an M-convex set
in strongly polynomial time, which we describe in Section~\ref{SCdecalgG}.
It is natural to expect that
Fujishige's decomposition algorithm \cite[Section~8.2]{Fuj05book} for Case~$\RR$ 
can be adapted to Case~$\ZZ$ through a piecewise-linear extension
of a given separable convex function in integer variables,
as is suggested in \cite[Section~8.3]{Fuj05book}. 
This approach leads indeed to 
another decomposition algorithm for minimizing the square-sum on an M-convex set,
which we describe in Section~\ref{SCdecalgF}.
To realize strong polynomiality we need to devise 
a non-trivial gadget to cope with complications arising from integrality.

Table~\ref{TBhistoryDA} summarizes the development of the decomposition algorithms 
for Case~$\RR$ and Case~$\ZZ$.
In Section~\ref{SCdecalgGen} we remark on the differences of these decomposition algorithms.

\begin{table}
\caption{Decomposition algorithms for minimization on a base-polyhedron}
\label{TBhistoryDA}
\centering 
\begin{tabular}{l|ccc}
   & Case~$\RR$  & & Case~$\ZZ$ 
\\ \hline
 Quadratic &  Fujishige \cite{Fuj80}   & & (Sec.~\ref{SCdecalg} of this paper) 
\\
 (square-sum) &  $\downarrow$
\\
Separable &  Groenevelt \cite{Gro91}  
           & $\rightarrow$ & Groenevelt \cite{Gro91}
\\
\ \ convex  & $\downarrow$ &  & 
\\
& Fujishige \cite[Sec.~8.2]{Fuj05book}    
           & $\rightarrow$ &  Fujishige \cite[Sec.~8.3]{Fuj05book}
\\ \hline
\end{tabular}
\end{table}

\subsection{Weighting}
In Case~$\RR$ a weight vector was introduced 
to define and analyze lexico-optimality in 
\cite{Fuj80,Fuj05book,Vei71}.
In Case~$\ZZ$ the unweighted problem has been investigated in
\cite{FM21partA,FM21partB}
and the weighted case will be treated in a forthcoming paper.
Although we restrict ourselves, in this paper,  to unweighted problems 
in Case~$\RR$ and Case~$\ZZ$,
some facts about the  weighted dec-min problem
are mentioned below.
(The `weighted' dec-min problem here should not
be confused with the `minimum cost' dec-min problem
discussed after Theorem~\ref{THmatroideltolt}.)

Let $w$ be a positive vector on $S$,
which is assumed to be an integral vector in Case~$\ZZ$.
The decreasing minimization problem with weighting $w$
is to find an element $m$ of $B$ (resp., $\odotZ{B}$ in Case~$\ZZ$) for which the vector 
$( w(s) m(s): s \in S)$
is decreasingly minimal in  $B$ (resp., $\odotZ{B}$ in Case~$\ZZ$).
Such an element is called a $w$-dec-min element.
We define $w$-inc-max in an obvious manner.
The (original) decreasing minimization without weighting
corresponds to the case of $w = (1,1,\ldots,1)$.
(It is noted that the weight vector $w$
in \cite{Fuj80,Fuj05book} corresponds
to the (componentwise) reciprocal of the vector $w$ here.) 
The $w$-dec-min problem should not be confused
with the minimum cost dec-min problem 
with respect to a given linear cost-function on $S$.
Even for the weighted dec-min problem,
we can formulate its minimum cost version,
which is to find the minimum cost $w$-dec-min element
with respect to a given cost-function 
$c: S \to \RR$ on $S$.

According to the results of \cite{Fuj80,Fuj05book,Vei71},
we may say, roughly, that
there is not much difference 
between the weighted and unweighted problems in Case~$\RR$.
In Case~$\ZZ$, however, weighting  causes substantial complications.
For example,
$w$-dec-min and $w$-inc-max coincide in Case~$\RR$, but not in Case~$\ZZ$.
In Case~$\RR$, there is a unique $w$-dec-min element in $B$
and is characterized as the unique minimizer of the weighted square-sum 
\begin{equation}  \label{Wcxdef}
W_{w}(x) := \sum [w(s) x(s)\sp{2}:  s\in S].
\end{equation}
In contrast, square-sum minimization does not characterize
$w$-dec-minimality in Case~$\ZZ$.
As a concrete example, consider the line segment $B$ on the plane $\RR\sp{2}$ 
connecting $(2,0)$ and $(0,2)$.
This $B$ is an integral base-polyhedron, and the corresponding M-convex set 
is $\odotZ{B} = \{ (2,0), (1,1), (0,2) \}$.
For the weight vector $w=(1,2)$,
$m_{1}:=(2,0)$ is the (unique) $w$-dec-min element of $\odotZ{B}$
and 
$m_{2}:=(1,1)$ is the (unique) $w$-inc-max element of $\odotZ{B}$.
Since $W_{w}(m_{1}) =4 > 3 = W_{w}(m_{2})$, 
$w$-dec-minimality is not characterized by square-sum minimality.

\section{Principal partition and canonical partition}
\label{SCprincanopart}

A review of the principal partition 
is offered in Section~\ref{SCprinpatR}
with emphasis on its role in decreasing minimization,
while Section~\ref{SCcanopatZ} is a review of the canonical partition  
for discrete decreasing minimization.
Section~\ref{SCcanopatNew} gives a new characterization of the canonical partition,
which is used in Section~\ref{SCcanopatprinpat}
to clarify the relationship between the principal and canonical partitions.
It is mentioned that this section is based on 
our (unpublished) technical report \cite{FM18partII}.

\subsection{Review of the principal partition}
\label{SCprinpatR}

As is pointed out by Fujishige \cite{Fuj80},
the dec-min element in the continuous case
is closely related to the principal partition.
The principal partition is the central concept 
in a structural theory for submodular functions;
see Iri \cite{Iri79} for an early survey and 
Fujishige \cite{Fuj09bonn} for a comprehensive historical and technical account.
In this section we summarize the results 
that are relevant to the analysis of the dec-min element in the continuous case.
Originally \cite{Fuj80}, the results are stated for a real-valued submodular function,
and the description below is a translation for a real-valued supermodular function 
$p: 2\sp{S} \to \RR \cup \{ -\infty \}$
with $p(\emptyset)=0$.

For any real number $\lambda$,
let $\mathcal{L}(\lambda)$ denote the family of all maximizers of
$p(X) - \lambda |X|$.
Then $\mathcal{L}(\lambda)$ is a ring family (lattice), and we denote 
its smallest member by $L(\lambda)$.
That is, $L(\lambda)$ denotes the smallest maximizer of $p(X) - \lambda |X|$.

The following is a  well-known basic fact.
The proof is included for completeness.

\begin{proposition} \label{PRbasicPrinPatR}
\noindent
{\rm (1)}
If $\lambda > \lambda'$, $X \in \mathcal{L}(\lambda)$, and $Y \in \mathcal{L}(\lambda')$, 
then  $X \subseteq Y$.

\noindent
{\rm (2)}
If $\lambda \geq \lambda'$, then $L(\lambda) \subseteq L(\lambda')$.
\end{proposition}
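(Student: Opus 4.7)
The plan is to prove (1) by the standard ``crossing'' / uncrossing argument exploiting supermodularity, and then to deduce (2) immediately from (1) together with the reflexive case $\lambda=\lambda'$.

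For part (1), I would pick any $X\in\mathcal{L}(\lambda)$ and $Y\in\mathcal{L}(\lambda')$ and compare each with a suitable ``uncrossed'' set. Specifically, from the maximality of $X$ at level $\lambda$ I would write
\[
 p(X)-\lambda|X| \ \geq\  p(X\cap Y)-\lambda|X\cap Y|,
\]
and from the maximality of $Y$ at level $\lambda'$
\[
 p(Y)-\lambda'|Y| \ \geq\  p(X\cup Y)-\lambda'|X\cup Y|.
\]
Adding these two inequalities and then invoking supermodularity in the form $p(X\cap Y)+p(X\cup Y)\geq p(X)+p(Y)$ cancels the $p$-terms and leaves a purely counting inequality in $|X|,|Y|,|X\cap Y|,|X\cup Y|$. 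Using $|X\cap Y|-|X|=-|X\setminus Y|$ and $|X\cup Y|-|Y|=|X\setminus Y|$, this simplifies to
\[
 (\lambda-\lambda')\,|X\setminus Y| \ \leq\ 0.
\]
Since $\lambda>\lambda'$, this forces $|X\setminus Y|=0$, i.e.\ $X\subseteq Y$, as required.

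Part (2) is then immediate: when $\lambda>\lambda'$ apply (1) with $X:=L(\lambda)\in\mathcal{L}(\lambda)$ and $Y:=L(\lambda')\in\mathcal{L}(\lambda')$ to obtain $L(\lambda)\subseteq L(\lambda')$, while $\lambda=\lambda'$ gives $L(\lambda)=L(\lambda')$ trivially. (One should also note in passing that $L(\lambda)$ is well-defined: the two uncrossing inequalities above, applied to two maximizers at the same level $\lambda$, show that $\mathcal{L}(\lambda)$ is closed under intersection, so a unique smallest member exists.)

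The only subtlety I anticipate is the convention $p(X)=-\infty$: I would need to check that the supermodular inequality used above is legitimate in the presence of $-\infty$ values. This is harmless because $X$ and $Y$, being maximizers, have finite $p$-values, and the supermodular inequality $p(X\cap Y)+p(X\cup Y)\geq p(X)+p(Y)$ with a finite right-hand side forces both $p(X\cap Y)$ and $p(X\cup Y)$ to be finite (since $+\infty$ is excluded for $p$ and $-\infty$ on the left would contradict finiteness on the right). Thus all quantities in the chain of inequalities are finite and the argument proceeds as described.
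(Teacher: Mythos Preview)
Your proof is correct and follows essentially the same uncrossing argument as the paper: both combine the two maximizer inequalities with supermodularity and the cardinality identity $|X\cap Y|-|X|=-(|X\cup Y|-|Y|)=-|X\setminus Y|$ to force $(\lambda-\lambda')|X\setminus Y|\leq 0$. The paper organizes the computation slightly differently (first deriving a single inequality and then observing the reverse holds by maximality, yielding equality), but the content is identical; your additional remark on the $-\infty$ case is a nice clarification that the paper leaves implicit.
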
 
\begin{proof}
(1) Let $X \in \mathcal{L}(\lambda)$ and $Y \in \mathcal{L}(\lambda')$
for $\lambda > \lambda'$.
We have
\begin{align}
 p(X) +  p(Y) & \leq p(X \cap Y) + p(X \cup Y),
\\ 
 \lambda |X| + \lambda' |Y|  
 & =  \lambda |X \cap Y| + \lambda' |X \cup Y| 
+ (\lambda - \lambda') |X - Y|
\nonumber \\ &
\geq
  \lambda |X \cap Y| + \lambda' |X \cup Y| .
\label{pripatprf}
\end{align}
{}From these inequalities it follows that
\begin{align*}
& ( p(X) - \lambda |X| ) +  ( p(Y) - \lambda' |Y| ) 
\\ & \leq 
 ( p(X \cap Y) - \lambda |X \cap Y| ) +  ( p(X \cup Y) - \lambda' |X \cup Y| ) .
\end{align*}
Here we also have the reverse inequality $\geq$,
since $X$ is a maximizer for $\lambda$ and $Y$ is a maximizer for $\lambda'$. 
Therefore, we have equality in \eqref{pripatprf},
from which follows $(\lambda - \lambda')|X - Y|=0$.
Since $\lambda - \lambda' > 0$, this implies $X \subseteq Y$.

(2) This follows immediately from (1).
\qedJIAM
\end{proof}

There are finitely many numbers $\lambda$ for which
$| \mathcal{L}(\lambda)| \geq 2$.
We denote such numbers as
$\lambda_{1} > \lambda_{2} > \cdots > \lambda_{r}$,
which are called the {\embf critical values}.
Note that the condition $| \mathcal{L}(\lambda)| \geq 2$
for a critical value 
is equivalent to saying that
the largest element of $\mathcal{L}(\lambda)$
is distinct from the smallest element of $\mathcal{L}(\lambda)$.
Since
the largest element of $\mathcal{L}(\lambda)$
is equal to $L(\lambda - \varepsilon)$
for a sufficiently small $\varepsilon > 0$,
we can say that 
$\lambda$ is a critical value if and only if
$L(\lambda) \not= L(\lambda - \varepsilon)$
for any $\varepsilon > 0$.
Thus we obtain:
\begin{align}
& \emptyset= L(\lambda_{1}) \subset L(\lambda_{1}-\varepsilon) 
= \cdots = L(\lambda_{2}) \subset L(\lambda_{2}-\varepsilon) = \cdots
\nonumber \\ & \qquad
\cdots = L(\lambda_{r}) \subset L(\lambda_{r}-\varepsilon) = S
\label{LlambdaChain}
\end{align}
for sufficiently small $\varepsilon > 0$.

The {\embf principal partition}
$\{ \hat S_{1}, \hat S_{2}, \ldots, \hat S_{r} \}$
of the ground-set $S$ is defined by 
\begin{equation}  \label{prinpatRdef}
 \hat S_{i} 
:= L(\lambda_{i+1}) - L(\lambda_{i})
= L(\lambda_{i} - \varepsilon) - L(\lambda_{i})
\qquad (i=1,2,\ldots, r),
\end{equation}
where 
$L(\lambda_{r+1}) := S$ by convention
and $\varepsilon$ is a sufficiently small positive number.
By defining 
\begin{equation}  \label{prinChainRdef}
\hat C_{i} := 
\hat S_{1} \cup \hat S_{2} \cup \cdots \cup  \hat S_{i}
\quad (= L(\lambda_{i} - \varepsilon) ) 
\end{equation}
for $i=1,2,\ldots, r$,
we obtain a chain: 
\begin{equation}  \label{princhainR}
\hat C_{1} \subset  \hat C_{2} \subset  \cdots \subset \hat C_{r},
\end{equation}
where $\hat C_{1} \not= \emptyset$ and $\hat C_{r} = S$.
In this paper we call this chain the {\embf principal chain}.
We have 
$\hat S_{i}= \hat C_{i} -  \hat C_{i-1}$ for $i=1,2,\ldots, r$,
where $\hat C_{0} := \emptyset$.

\begin{remark} \rm  \label{RMprinfiner}
The principal partition 
$\{ \hat S_{1}, \hat S_{2}, \ldots, \hat S_{r} \}$
defined here by \eqref{prinpatRdef}
is, in fact, an aggregation of 
what is usually meant under the name of ``principal partition'' \cite{Fuj09bonn,Iri79}.
The latter is defined as follows.
Not only each $\mathcal{L}(\lambda)$ is a lattice, but
their union 
$\mathcal{L}_{\rm all} := \bigcup_{\lambda \in \RR} \mathcal{L}(\lambda)$
is also a lattice by Proposition~\ref{PRbasicPrinPatR}(1),
and we have $\mathcal{L}_{\rm all} = \bigcup_{i=1}\sp{r} \mathcal{L}(\lambda_{i})$.
A maximal chain of this lattice $\mathcal{L}_{\rm all}$
induces a partition of the ground-set $S$,
and the induced partition is determined independently of the choice of a maximal chain.
In \cite{Fuj09bonn,Iri79},  this is called the principal partition of $S$.
Furthermore, a partial order can be defined among the members
of the partition.
The chain
\begin{equation}  \label{princhainR0}
(\emptyset =) \ \hat C_{0} \subset  \hat C_{1} \subset  \hat C_{2} 
\subset  \cdots \subset \hat C_{r} \ (=S)
\end{equation}
associated with our partition 
$\{ \hat S_{1}, \hat S_{2}, \ldots, \hat S_{r} \}$
is certainly a chain of $\mathcal{L}_{\rm all}$,
which, however, may not be maximal.
If the chain in \eqref{princhainR0} is not maximal,  
our partition is an aggregation of the principal partition 
in the sense of \cite{Fuj09bonn,Iri79}.
\finbox
\end{remark}

The following theorem shows the close relationship
between the principal partition and the unique dec-min element (minimum norm point)
of a base-polyhedron $B=B'(p)$.

\begin{theorem}[Fujishige \cite{Fuj80}] \label{THlexoptbaseR}
Let $B=B'(p)$ be a base-polyhedron defined by a supermodular function $p$.

\noindent
{\rm (1)} \ 
Let 
$\{ \hat S_{1}, \hat S_{2},  \allowbreak  \ldots, \allowbreak   \hat S_{r} \}$
be the principal partition
and
$\lambda_{1} > \lambda_{2} > \cdots > \lambda_{r}$
the critical values.
The unique dec-min element $m_{\RR}$ of $B$ 
is given by 
$m_{\RR}(s) = \lambda_{i}$ for $s \in  \hat S_{i}$ and $i=1,2,\ldots, r$. 
In particular,  $m_{\RR}$ is uniform on each member $\hat S_{i}$
of the principal partition.

\noindent
{\rm (2)} \ 
Let $m_{\RR}$ be the unique dec-min element of $B$.
The critical values 
$\lambda_{1} > \lambda_{2} > \cdots > \lambda_{r}$
are precisely those numbers that appear as component values of $m_{\RR}$,
and the principal partition
$\{ \hat S_{1}, \hat S_{2}, \ldots, \hat S_{r} \}$
is given by 
$\hat S_{i} = \{ s \in S : m_{\RR}(s) = \lambda_{i} \}$\, 
for $i=1,2,\ldots, r$. 
\finboxARX
\end{theorem}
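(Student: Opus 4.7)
The plan is to start with the unique decreasingly minimal element $m_{\RR}$ of $B = B'(p)$ and apply the chain characterization of Theorem~\ref{THdecmincharR}(C) to exhibit a chain of tight top sets on which $m_{\RR}$ is uniform; we then identify this chain with the principal chain $(\hat C_{i})$ and the associated constants with the critical values $\lambda_{i}$. Establishing this identification proves (1) and (2) simultaneously, because (1) expresses $m_{\RR}$ as the step function built from $(\hat C_{i})$ and $(\lambda_{i})$, while (2) reads the principal partition off the level sets of $m_{\RR}$.

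First, we invoke Theorem~\ref{THdecmincharR}(C) on $m = m_{\RR}$ to obtain a chain
$\emptyset = C_{0} \subset C_{1} \subset \cdots \subset C_{\ell} = S$
such that every $C_{i}$ is $m$-top and $m$-tight, and $m$ is uniform on each piece $S_{i} := C_{i} - C_{i-1}$ with some value $\mu_{i}$. Because each $C_{i}$ is $m$-top, the values strictly decrease, $\mu_{1} > \mu_{2} > \cdots > \mu_{\ell}$. The goal is now to show that the $\mu_{i}$ are precisely the critical values and that $(C_{i})$ is the principal chain.

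To this end, fix any $Z \subseteq S$ and any $\lambda \in \RR$. Combining $m \in B'(p)$ with $\widetilde m(Z) = \sum_{j} \mu_{j}\,|Z \cap S_{j}|$ yields
\begin{equation*}
  p(Z) - \lambda|Z| \ \leq \ \widetilde m(Z) - \lambda|Z| \ = \ \sum_{j=1}^{\ell} (\mu_{j} - \lambda)\,|Z \cap S_{j}|.
\end{equation*}
The right-hand side is maximized piece by piece by setting $|Z \cap S_{j}| = |S_{j}|$ whenever $\mu_{j} > \lambda$ and $|Z \cap S_{j}| = 0$ whenever $\mu_{j} < \lambda$, giving the bound $\sum_{j:\,\mu_{j} > \lambda} (\mu_{j} - \lambda)|S_{j}|$. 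The $m$-tightness $\widetilde m(C_{k}) = p(C_{k})$ then shows that $C_{k}$ attains this bound whenever $\mu_{k+1} \leq \lambda \leq \mu_{k}$ (with the conventions $\mu_{0} = +\infty$ and $\mu_{\ell+1} = -\infty$). Hence $\mathcal{L}(\lambda) = \{C_{k}\}$ for $\lambda \in (\mu_{k+1}, \mu_{k})$, whereas at $\lambda = \mu_{k}$ both $C_{k-1}$ and $C_{k}$ belong to $\mathcal{L}(\mu_{k})$, so $|\mathcal{L}(\mu_{k})| \geq 2$.

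Consequently the critical values are exactly $\mu_{1} > \cdots > \mu_{\ell}$, forcing $\ell = r$ and $\mu_{i} = \lambda_{i}$, and for small $\varepsilon > 0$ we obtain $L(\lambda_{i} - \varepsilon) = C_{i}$, so the principal chain from \eqref{prinChainRdef} satisfies $\hat C_{i} = C_{i}$ and hence $\hat S_{i} = S_{i}$. Substituting, $m_{\RR}(s) = \mu_{i} = \lambda_{i}$ on each $S_{i} = \hat S_{i}$, which is precisely what (1) and (2) assert. The main technical point to handle carefully is the boundary case $\lambda = \mu_{k}$: we must verify that two consecutive chain elements are simultaneously optimal for $p(X) - \mu_{k}|X|$ and that no $\lambda$ outside $\{\mu_{1},\ldots,\mu_{\ell}\}$ can be critical; the piece-by-piece sign analysis of $\mu_{j} - \lambda$ above is what settles both points, and once this is in place the theorem falls out immediately.
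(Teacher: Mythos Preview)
The paper does not give its own proof of this theorem; it is stated as a known result of Fujishige \cite{Fuj80} and closed with \finbox\ immediately after the statement. So there is no argument in the paper to compare against, and your self-contained proof via Theorem~\ref{THdecmincharR}(C) is a welcome addition.

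Your argument is correct in substance. The only point that needs tightening is the sentence ``Because each $C_{i}$ is $m$-top, the values strictly decrease, $\mu_{1}>\mu_{2}>\cdots>\mu_{\ell}$.'' The $m$-top property only gives $\mu_{i}\geq\mu_{i+1}$; Theorem~\ref{THdecmincharR}(C) merely asserts the existence of \emph{some} chain, and nothing prevents two consecutive pieces from carrying the same value. The fix is immediate: whenever $\mu_{i}=\mu_{i+1}$, delete $C_{i}$ from the chain. The shortened chain still consists of $m$-top, $m$-tight sets with $m$ uniform on each difference, and after finitely many such deletions the values are strictly decreasing. (Equivalently, one may simply take the chain of level sets $\{s:m(s)\geq\mu\}$ from the start; the coarsening argument shows these are exactly the surviving $C_{i}$'s, hence $m$-tight.) With strict monotonicity in hand, your piece-by-piece sign analysis of $\mu_{j}-\lambda$ correctly shows that $\mathcal{L}(\lambda)=\{C_{k}\}$ for $\lambda\in(\mu_{k+1},\mu_{k})$ and that $C_{k-1},C_{k}\in\mathcal{L}(\mu_{k})$, which pins down the critical values and the principal chain exactly as you claim.
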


The following characterization of dec-minimality 
can be formulated by combining
Theorem~\ref{THlexoptbaseR} and Theorem~\ref{THdecmincharR}.

\begin{theorem} \label{THdecmincharR2}
Let $B=B'(p)$ be a base-polyhedron defined by a supermodular function $p$.
An element $m \in B$ with distinct component values 
$\lambda'_{1}> \lambda'_{2}> \cdots >\lambda'_{\ell}$ is 
the unique dec-min element of $B$
if and only if
each ``level set'' 
$\hat C_{i}:=\{ s\in S:  m(s)\geq \lambda'_{i} \}$ is $m$-tight
($\widetilde m(\hat C_{i})=p(\hat C_{i})$)
for $i=1,2,\ldots, \ell$.
\end{theorem}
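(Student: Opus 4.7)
The plan is to deduce both directions from Theorem~\ref{THdecmincharR}, exploiting the elementary observation that when $m\in B$ has pairwise distinct component values $\lambda'_{1}>\cdots >\lambda'_{\ell}$, every nonempty $m$-top subset of $S$ coincides with some level set $\hat C_{i}=\{s\in S : m(s)\geq \lambda'_{i}\}$. Indeed, if $X$ is $m$-top and $\mu:=\min_{s\in X} m(s)$ is attained at $s\sp{*}\in X$, then no $t\in S-X$ can satisfy $m(t)>\mu$ (otherwise the $m$-top condition $m(s\sp{*})\geq m(t)$ would fail), and no $t\in S-X$ can satisfy $m(t)=\mu$ either, because $\mu$ is already attained at $s\sp{*}\in X$ and values are distinct. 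Hence $S-X\subseteq\{s: m(s)<\mu\}$ and $X=\{s: m(s)\geq\mu\}$, so $X=\hat C_{i}$ for the index $i$ with $\lambda'_{i}=\mu$.

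For the ``if'' direction, assume each $\hat C_{i}$ is $m$-tight. Then $\hat C_{1}\subset \hat C_{2}\subset \cdots \subset \hat C_{\ell}=S$ is a chain of $m$-top and $m$-tight sets, and $m$ takes the constant value $\lambda'_{i}$ on each difference $\hat C_{i}-\hat C_{i-1}$ (with $\hat C_{0}:=\emptyset$). Condition~(C) of Theorem~\ref{THdecmincharR} is therefore satisfied, which implies that $m$ is decreasingly minimal in $B$; by the uniqueness of dec-min elements stated in Theorem~\ref{THdecincR}, $m$ is in fact the unique dec-min element of $B$.

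For the ``only if'' direction, let $m$ be the unique dec-min element of $B$. Theorem~\ref{THdecmincharR}(C) provides a chain $C_{1}\subset C_{2}\subset \cdots \subset C_{\ell'}=S$ of $m$-top and $m$-tight sets with $m$ uniform on each $S_{j}:=C_{j}-C_{j-1}$. By the observation in the first paragraph, each $C_{j}$ equals $\hat C_{k(j)}$ for some index $k(j)$, and the strict increase of the chain makes $k(j)$ strictly increasing in $j$. Uniformity of $m$ on $S_{j}=\hat C_{k(j)}-\hat C_{k(j-1)}$ combined with the distinctness of the values $\lambda'_{1},\ldots,\lambda'_{\ell}$ forces $k(j)=k(j-1)+1$; together with $C_{\ell'}=\hat C_{\ell}$ this gives $\ell'=\ell$ and $C_{j}=\hat C_{j}$ for every $j$, so every level set $\hat C_{i}$ is $m$-tight. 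The main (and only) obstacle is to recognize the ``$m$-top sets are level sets'' fact used above; once this is isolated, the theorem is a direct translation of Theorem~\ref{THdecmincharR}(C) into the language of level sets, and no further input from the principal partition theory (Theorem~\ref{THlexoptbaseR}) is needed.
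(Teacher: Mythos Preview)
Your proof is correct. The ``if'' direction coincides with the paper's argument: the level sets $\hat C_{i}$ are $m$-top by construction and $m$-tight by hypothesis, so (C)$\Rightarrow$(A) of Theorem~\ref{THdecmincharR} applies.

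For the ``only if'' direction, however, you take a different route from the paper. The paper invokes Theorem~\ref{THlexoptbaseR}(2), which identifies the distinct component values of the dec-min element with the critical values and the level sets with the principal chain; $m$-tightness of each $\hat C_{i}$ then follows directly from the known tightness of the principal chain. You instead stay within Theorem~\ref{THdecmincharR}: using (A)$\Rightarrow$(C), you obtain some chain of $m$-top, $m$-tight sets on whose successive differences $m$ is uniform, and then your key observation---that when all component values are distinct every nonempty $m$-top set is a level set---forces this chain to coincide with $\hat C_{1}\subset\cdots\subset\hat C_{\ell}$. Your argument is more self-contained, since it avoids the principal partition machinery of Theorem~\ref{THlexoptbaseR} entirely; the paper's argument is shorter but imports a heavier structural result. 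One small point: in your induction you implicitly set $k(0)=0$ with $\hat C_{0}=\emptyset$ to handle $j=1$; this is harmless but could be stated explicitly.
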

\begin{proof}
The only-if part is immediate from Theorem~\ref{THlexoptbaseR}(2).
For the if-part,
note that $\hat C_{i}$ is an $m$-top set by definition,
which is also $m$-tight by assumption.
Then we can use 
(C) $\Rightarrow$ (A) in 
Theorem~\ref{THdecmincharR}.
\qedJIAM
\end{proof}

\subsection{Review of the canonical partition}
\label{SCcanopatZ}

In the discrete case,
the canonical partition describes the structure of dec-min elements.
The canonical partition is defined iteratively using contractions
as follows \cite{FM21partA}.

Let $p: 2\sp{S} \to \ZZ \cup \{ -\infty \}$
be an integer-valued supermodular function
with $p(\emptyset)=0$ and $p(S) > -\infty$, and define $C_{0} := \emptyset$.
For $j=1,2,\ldots, q$,	define
\begin{align}
\beta_{j} &:= \max \left\{  \left\lceil  \frac{p(X \cup C_{j-1}) - p(C_{j-1})}{|X|}  \right\rceil :  
         \emptyset \not= X \subseteq \overline{C_{j-1}}   \right\},
\label{betajdef}
\\
h_{j}(X) &:= p(X \cup C_{j-1}) - (\beta_{j} - 1) |X| - p(C_{j-1})    
\qquad
(X \subseteq \overline{C_{j-1}}),
\label{hjdef}
\\
S_{j} &:=  \mbox{smallest subset of $\overline{C_{j-1}}$ maximizing $h_{j}$},
\label{Sjdef}
\\
C_{j} &:=  C_{j-1} \cup S_{j} ,
\label{Cjdef}
\end{align}
where $\overline{C_{j-1}} = S  - C_{j-1}$ and
 the index $q$ is determined by the condition that
$C_{q-1} \not= S$ and $C_{q} = S$.

The resulting chain  
$C_{1} \subset  C_{2} \subset  \cdots \subset  C_{q}$
is called the {\embf canonical chain},
and the partition  
$\{ S_{1},  S_{2}, \ldots, S_{q} \}$
is the {\embf canonical partition}. 
The integers $\beta_{j}$, 
known \cite[Theorem~5.5]{FM21partA} to be decreasing,
are called the {\embf essential values},
and this decreasing sequence 
$\beta_{1} > \beta_{2} > \cdots > \beta_{q}$
is named the {\embf essential value-sequence}.
We have
$S_{j}=C_{j} -  C_{j-1}$ for $j=1,2,\ldots, q$.

The following theorem characterizes dec-min elements of $\odotZ{B}$
using these concepts.

\begin{theorem}[{\cite[Corollary 5.2]{FM21partA}}]  \label{THchardecminZ}
Let $B=B'(p)$ be an integral base-polyhedron on ground-set $S$.  
Let $\{ C_{1}, C_{2}, \dots ,C_{q}\}$ be the canonical chain,
$\{ S_{1},  \allowbreak  S_{2}, \allowbreak   \dots ,S_{q}\}$ 
the canonical partition of $S$,
and $\beta_{1}> \beta_{2}> \dots > \beta_{q}$ 
the essential value-sequence belonging to $\odotZ{B}$.  
Then an element $m \in \odotZ{B}$ is decreasingly minimal in $\odotZ{B}$
if and only if 
each $C_{j}$ is $m$-tight 
(that is, $\widetilde m(C_{j})=p(C_{j})$) 
and $\beta_{j} -1 \leq m(s) \leq \beta_{j}$ holds for each 
$s\in S_{j}$ \ $(j=1,2,\dots ,q)$.  
\finboxARX 
\end{theorem}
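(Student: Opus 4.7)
The proof naturally splits into the two implications, with the chain characterization (C) of Theorem~\ref{THdecmincharZ} serving as the bridge between dec-minimality and the tightness/value data built into the canonical chain.

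For the ``if'' direction, I would take the canonical chain $\emptyset=C_0\subset C_1\subset\cdots\subset C_q=S$ itself as the witness required by condition~(C). Three items need checking. First, each $C_j$ is $m$-tight by hypothesis. Second, each $C_j$ is an $m$-top set: every value of $m$ on $C_j$ is at least $\beta_j-1$, every value on $S-C_j$ is at most $\beta_{j+1}$, and since the $\beta_k$ are strictly decreasing integers one has $\beta_j-1\geq\beta_{j+1}$. Third, $m$ is near-uniform on each $S_j$ because $m(s)\in\{\beta_j-1,\beta_j\}$. The implication (C)$\Rightarrow$(A) in Theorem~\ref{THdecmincharZ} then delivers dec-minimality.

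For the ``only if'' direction, I would proceed by induction on $j$, exploiting the iterative construction of the canonical chain via contraction at $C_{j-1}$. The inductive step reduces to the base step $j=1$ after replacing $p$ by the contracted supermodular function $p'(X):=p(X\cup C_{j-1})-p(C_{j-1})$ on $\overline{C_{j-1}}$, so it suffices to argue the base case. Since $m\in B$ satisfies $\widetilde m(X)\geq p(X)$ for every $X$, integrality and pigeonhole give $\max_{s\in X}m(s)\geq\lceil p(X)/|X|\rceil$, and maximizing over nonempty $X$ shows that some component of $m$ attains a value at least $\beta_1$. In the other direction, Theorem~\ref{THdecmincharZ}(A)$\Rightarrow$(C) supplies \emph{some} chain of $m$-top and $m$-tight sets on whose intervals $m$ is near-uniform with strictly decreasing top levels. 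Combined with obstruction (B) to 1-tightening steps, this forces the overall maximum of $m$ to equal exactly $\beta_1$: any component valued $\geq\beta_1+1$ coexisting with one valued $\leq\beta_1-1$ would admit a 1-tightening, contradicting dec-minimality.

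The main obstacle is then identifying the canonical set $S_1$---defined as the smallest maximizer of $h_1(X)=p(X)-(\beta_1-1)|X|$---with the first block of a chain witnessing~(C) for $m$, and simultaneously deducing $m$-tightness of $C_1$. The key inequality is
\begin{equation*}
h_1(X) \;=\; p(X)-(\beta_1-1)|X| \;\leq\; \widetilde m(X)-(\beta_1-1)|X| \;=\; \sum_{s\in X}\bigl(m(s)-(\beta_1-1)\bigr),
\end{equation*}
which, given the established bound $m(s)\leq\beta_1$, is bounded above by $|\{s\in X:m(s)=\beta_1\}|$; equality throughout forces $X$ to be $m$-tight and contained in the level set $\{s:m(s)=\beta_1\}$. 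Thus the smallest maximizer of $h_1$ is the smallest $m$-tight subset on which $m\equiv\beta_1$, and the technical crux is to verify that this description is in fact independent of the particular dec-min element chosen, so that the extracted set $S_1$ is canonical. Once this is in hand, the base case yields $m$-tightness of $C_1=S_1$ together with $m(s)\in\{\beta_1-1,\beta_1\}$ on $S_1$, and the induction completes via the contraction $p\mapsto p'$.
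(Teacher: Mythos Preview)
The paper does not prove this theorem at all: it is stated with a citation to \cite[Corollary~5.2]{FM21partA} and closed immediately with \finbox. So there is no ``paper's own proof'' to compare against; your proposal is an attempt to reconstruct a proof of a quoted result.

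On the merits of your attempt: the ``if'' direction is clean and correct --- the canonical chain itself witnesses condition~(C) of Theorem~\ref{THdecmincharZ}, and your three checks go through.

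The ``only if'' direction has real gaps. First, your argument that $\max_s m(s)=\beta_1$ via 1-tightening is invalid: the existence of components with $m(t)\geq m(s)+2$ does \emph{not} by itself yield $m+\chi_s-\chi_t\in\odotZ{B}$; feasibility of the exchange must be established, and in general it need not hold. A correct route is to note that $p(X)\leq\beta_1|X|$ for all $X$ places $\beta_1\vecone$ in the supermodular polyhedron, hence some base $z\leq\beta_1\vecone$ exists, and dec-minimality of $m$ forces $\max_s m(s)\leq\max_s z(s)\leq\beta_1$. Second, your analysis of the $h_1$-maximizer is wrong: equality in your displayed chain only forces $m(s)\in\{\beta_1-1,\beta_1\}$ on $X$, not $m\equiv\beta_1$ on $X$, so the description ``smallest $m$-tight subset on which $m\equiv\beta_1$'' does not follow. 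Third, you have only bounded $h_1$ from above; you have not exhibited a set attaining the bound, nor argued which among the attaining sets is smallest. Finally, the ``technical crux'' you defer --- that the smallest $m$-tight set containing the $\beta_1$-valued elements is independent of the choice of dec-min $m$ --- is precisely the content of Theorem~\ref{THcanodecminZ} (also quoted without proof here), so your sketch has essentially relocated the difficulty rather than resolved it.
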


This theorem implies, in particular, that 
every dec-min element $m_{\ZZ}$ of $\odotZ{B}$ is near-uniform 
on each member of the canonical partition.
That is, $|m_{\ZZ}(s) - m_{\ZZ}(t)| \leq 1$ if $\{ s, t \} \subseteq S_{j}$ 
for some member $S_{j}$ of the canonical partition.

The following theorem shows that any dec-min element $m_{\ZZ}$ of $\odotZ{B}$,
in turn,
determines the essential value-sequence, the canonical chain, and
canonical partition.

\begin{theorem}[{\cite[Corollary 5.4]{FM21partA}}]  \label{THcanodecminZ}
Let $m_{\ZZ}$ be an arbitrary dec-min element of $\odotZ{B}$.  
The first  essential value $\beta_{1}$ 
is the largest $m_{\ZZ}$-value and 
the first member $C_{1}$ of the canonical chain
is the smallest
$m_{\ZZ}$-tight set containing all $\beta_{1}$-valued elements.  
Moreover, for $j=2,\dots ,q$, 
the $j$-th essential value $\beta_{j}$ is the largest value of 
$m_{\ZZ}(s)$ for $s \in S - C_{j-1}$ and 
the $j$-th member $C_{j}$ of the canonical chain
is the smallest $m_{\ZZ}$-tight set 
(with respect to $p$) 
containing each element of $m_{\ZZ}$-value at least $\beta_{j}$.
\finboxARX
\end{theorem}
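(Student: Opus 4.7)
The plan is to derive the theorem directly from Theorem~\ref{THchardecminZ}, which already guarantees that an arbitrary dec-min element $m_{\ZZ}$ has values in $\{\beta_j - 1, \beta_j\}$ on each $S_j$ and that every $C_j$ is $m_{\ZZ}$-tight. From this local structure I would reconstruct the essential value-sequence and the canonical chain level by level, the $j=1$ case carrying all the real content and $j \geq 2$ following by contraction.

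For the base level, first I would show $\beta_{1} = \max_{s \in S} m_{\ZZ}(s)$. The upper bound is immediate: on each $S_j$ one has $m_{\ZZ}(s) \leq \beta_j \leq \beta_1$ since the $\beta_j$ are strictly decreasing. For the matching lower bound, the definition $\beta_{1} = \max \lceil p(X)/|X| \rceil$ (with $C_0 = \emptyset$) specialized to $X = S_1$ gives $p(S_1) > (\beta_1 - 1)|S_1|$; combined with the tightness $\widetilde{m_{\ZZ}}(S_1) = p(S_1)$ and the near-uniformity $m_{\ZZ}(s) \in \{\beta_1 - 1, \beta_1\}$ on $S_1$, at least one $s \in S_1$ must attain $\beta_1$. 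In particular every $\beta_1$-valued element lies in $C_1$, so $C_1$ is indeed an $m_{\ZZ}$-tight set containing all of them, making it a candidate for the smallest such.

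The main obstacle is the minimality claim. Given any $m_{\ZZ}$-tight $T$ containing every $\beta_1$-valued element, I would combine the supermodular inequality $p(T) + p(C_1) \leq p(T \cap C_1) + p(T \cup C_1)$ with $\widetilde{m_{\ZZ}}(T) = p(T)$, $\widetilde{m_{\ZZ}}(C_1) = p(C_1)$, and $\widetilde{m_{\ZZ}}(X) \geq p(X)$ to force $T \cap C_1$ to be $m_{\ZZ}$-tight as well. Then a direct computation
\begin{equation*}
h_{1}(C_{1}) - h_{1}(T \cap C_{1}) = \widetilde{m_{\ZZ}}(C_{1} \setminus T) - (\beta_{1} - 1)\,|C_{1} \setminus T| \leq 0
\end{equation*}
uses that each $s \in C_1 \setminus T \subseteq S_1$ satisfies $m_{\ZZ}(s) \leq \beta_1 - 1$ (otherwise $s$ would be a $\beta_1$-valued element outside $T$). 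Since $S_1 = C_1$ is a maximizer of $h_1$, equality must hold. Because $T \cap C_1$ is nonempty (it contains the $\beta_1$-valued element of $S_1$ established above), the defining property of $S_1$ as the \emph{smallest} nonempty maximizer of $h_1$ forces $T \cap C_1 = C_1$, hence $C_1 \subseteq T$, as required.

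For the inductive step $j \geq 2$, I would contract at $C_{j-1}$: the function $p'(X) := p(X \cup C_{j-1}) - p(C_{j-1})$ on $S \setminus C_{j-1}$ is supermodular, and by construction its canonical decomposition begins with essential value $\beta_j$ and first member $S_j$. Since $C_{j-1}$ is $m_{\ZZ}$-tight, the restriction $m_{\ZZ}|_{S \setminus C_{j-1}}$ lies in the contracted integral base-polyhedron $B'(p')$; moreover it is dec-min there, because any 1-tightening step on the restriction would lift to a 1-tightening step on $m_{\ZZ}$ itself (applying characterization (B) of Theorem~\ref{THdecmincharZ}). The base-case argument then applies verbatim to $(p', m_{\ZZ}|_{S \setminus C_{j-1}})$, yielding that $\beta_j$ is the largest value of $m_{\ZZ}$ on $S \setminus C_{j-1}$ and $S_j = C_j \setminus C_{j-1}$ is the smallest $p'$-tight set there containing every element of value $\geq \beta_j$; translating back via $\widetilde{m_{\ZZ}}(C_{j-1}) = p(C_{j-1})$ shows this is equivalent to $C_j$ being the smallest $m_{\ZZ}$-tight set (with respect to $p$) containing every element of value $\geq \beta_j$.
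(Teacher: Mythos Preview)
The paper does not prove this theorem; it is quoted verbatim from \cite[Corollary~5.4]{FM21partA} with a closing box and no argument. Your derivation from Theorem~\ref{THchardecminZ} via contraction is sound and is the natural route. Two points deserve tightening.

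First, your justification of $p(S_{1}) > (\beta_{1}-1)|S_{1}|$ by ``specializing to $X=S_{1}$'' in the definition $\beta_{1}=\max_{X}\lceil p(X)/|X|\rceil$ goes the wrong way: that specialization only gives $\beta_{1}\geq\lceil p(S_{1})/|S_{1}|\rceil$. The inequality you want follows instead from the role of $S_{1}$ as smallest maximizer of $h_{1}$: some $X^{*}$ attains the ceiling-max, so $h_{1}(X^{*})>0$, hence $h_{1}(S_{1})\geq h_{1}(X^{*})>0$. (Equivalently, $h_{1}(\emptyset)=0$ and $S_{1}\neq\emptyset$ force $h_{1}(S_{1})>0$.) With this in place your lower-bound argument for $\beta_{1}$ goes through, and your phrase ``smallest nonempty maximizer'' is then justified a posteriori.

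Second, in the inductive step the base case applied to the contraction only gives that $C_{j}$ is the smallest $m_{\ZZ}$-tight set containing $C_{j-1}$ \emph{and} all elements of value $\geq\beta_{j}$. To drop the ``$C_{j-1}$'' you must observe that any $m_{\ZZ}$-tight $T$ containing every element of value $\geq\beta_{j}$ in particular contains every element of value $\geq\beta_{j-1}$, whence $C_{j-1}\subseteq T$ by the inductive hypothesis at level $j-1$. Your final sentence elides this; make it explicit and the proof is complete.
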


These results
(Theorems \ref{THchardecminZ} and \ref{THcanodecminZ})
may be viewed as a discrete
counterpart of Theorem~\ref{THlexoptbaseR} for the continuous case.

\subsection{An alternative characterization of the canonical partition}
\label{SCcanopatNew}

Originally \cite{FM21partA},
the canonical partition was defined iteratively using contractions,
as described by \eqref{betajdef}--\eqref{Cjdef}
in Section~\ref{SCcanopatZ}.
In this section we give a non-iterative construction of this canonical partition,
which reflects the underlying structure more directly.
This alternative construction enables us to reveal,
in Section~\ref{SCcanopatprinpat},
the precise relation 
between the canonical partition and the principal partition.

By the definition given in \eqref{betajdef}--\eqref{Cjdef}, we have that
\begin{equation} \label{CjsmallestmaxCj1}
\mbox{$C_{j}$ is the smallest maximizer of 
$p(X) - (\beta_{j}-1) |X|$
among all $X \supseteq C_{j-1}$}.
\end{equation}
We will show, in Proposition~\ref{PRbasicitera} below,
that  $C_{j}$ is, in fact,  the smallest maximizer of 
$p(X) - (\beta_{j}-1) |X|$
among all subsets $X$ of $S$
without the constraint of being a superset of $C_{j-1}$.

For any integer $\beta$,
let $\mathcal{L}(\beta)$ denote the family of all maximizers of $p(X) - \beta |X|$,
and  $L(\beta)$ be the smallest element of $\mathcal{L}(\beta)$,
where  the smallest element exists in $\mathcal{L}(\beta)$ since 
$\mathcal{L}(\beta)$ is a lattice (ring family).
(These notations are consistent with the ones introduced in Section~\ref{SCprinpatR}.) \

\begin{proposition} \label{PRbasicitera}
Let $\beta_{j}$ and $C_{j}$ $(j=1,2,\ldots,q)$
be defined by \eqref{betajdef}--\eqref{Cjdef}.

\noindent
{\rm (1)}
 $\beta_{1} > \beta_{2} > \cdots > \beta_{q}$.

\noindent
{\rm (2)}
For each $j$ with $1 \leq j \leq q$, $C_{j}$ is 
the smallest maximizer of $p(X) - (\beta_{j}-1) |X|$
among all subsets $X$ of $S$.
\end{proposition}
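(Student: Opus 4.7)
The plan is to prove (1) and (2) together by induction on $j$, with the joint hypothesis at level $j$ being that $\beta_{j-1} > \beta_j$ and that $C_{j-1}$ is the smallest maximizer of $p(X) - (\beta_{j-1}-1)|X|$ over \emph{all} subsets $X \subseteq S$. The base case $j=1$ is immediate: since $C_0 = \emptyset$, the constraint ``$X \supseteq C_{j-1}$'' is vacuous in \eqref{betajdef}--\eqref{Cjdef}, so $C_1 = S_1$ is by construction the smallest maximizer of $p(X) - (\beta_1 - 1)|X|$ over all $X$, and no statement in (1) needs to be checked at this level.

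For the inductive step, (1) at level $j$ falls out of (2) at level $j-1$ almost for free. For any nonempty $X \subseteq \overline{C_{j-1}}$, applying the smallest-maximizer property of $C_{j-1}$ to the strict superset $X \cup C_{j-1}$ gives $p(X \cup C_{j-1}) - p(C_{j-1}) \leq (\beta_{j-1}-1)|X|$. Dividing by $|X|$, taking ceilings, and maximizing over $X$ shows $\beta_j \leq \beta_{j-1}-1 < \beta_{j-1}$, and since the $\beta$'s are integers, $\beta_{j-1}-1 \geq \beta_j$.

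The heart of the proof is (2) at level $j$. Writing $f(Z) := p(Z) - (\beta_j - 1)|Z|$, I proceed for an arbitrary $X \subseteq S$ with $Y := X \cup C_{j-1}$ and $W := X \cap C_{j-1}$. The supermodularity of $p$ combined with $|X|+|C_{j-1}| = |Y|+|W|$ yields
\[
  f(X) + f(C_{j-1}) \leq f(Y) + f(W).
\]
Two upper bounds on the right-hand side then do the work. The iterative definition \eqref{Sjdef}--\eqref{Cjdef} expresses $C_j$ as the smallest maximizer of $f$ among supersets of $C_{j-1}$, giving $f(Y) \leq f(C_j)$ with equality forcing $Y \supseteq C_j$. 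The inductive hypothesis (2) at level $j-1$ applied to $W \subseteq C_{j-1}$, together with $\beta_{j-1}-1 \geq \beta_j$, gives $p(C_{j-1}) - p(W) \geq (\beta_{j-1}-1)(|C_{j-1}|-|W|) \geq \beta_j(|C_{j-1}|-|W|)$, hence $f(C_{j-1}) - f(W) \geq |C_{j-1}| - |W|$, strictly positive when $W \subsetneq C_{j-1}$. Combining produces $f(X) \leq f(C_j) - (|C_{j-1}|-|W|) \leq f(C_j)$, so $C_j$ is a maximizer of $f$ over all subsets; and equality forces $W = C_{j-1}$, hence $X \supseteq C_{j-1}$, and then $X \supseteq C_j$ by the smallest-maximizer property among supersets. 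Thus $C_j$ is the smallest maximizer overall.

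The main obstacle is the careful bookkeeping of the strict and non-strict inequalities in the previous paragraph --- in particular, using the integrality jump $\beta_{j-1}-1 \geq \beta_j$ to extract the discrete gap $|C_{j-1}|-|W|$, which is precisely what lets the maximality claim be sharpened to the smallest-maximizer claim. This is the step where the integer-valued nature of $p$ and of the $\beta_j$'s plays an essential role and where the argument genuinely departs from its continuous analog in Proposition~\ref{PRbasicPrinPatR}.
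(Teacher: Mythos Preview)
Your proof is correct. It differs from the paper's in organization and in the technical route for part~(2). The paper proves (1) first and independently of any induction, using only the iterative definition (that $C_{j-1}$ is the smallest maximizer among supersets of $C_{j-2}$), and then proves (2) by induction, invoking Proposition~\ref{PRbasicPrinPatR}(2) to get $L(\beta_j-1)\supseteq L(\beta_{j-1}-1)=C_{j-1}$ and concluding $C_j=L(\beta_j-1)$ from \eqref{CjsmallestmaxCj1}. You instead run a joint induction and, for (2), bypass Proposition~\ref{PRbasicPrinPatR} entirely by a direct supermodularity computation with $Y=X\cup C_{j-1}$ and $W=X\cap C_{j-1}$, extracting the quantitative gap $f(C_{j-1})-f(W)\ge |C_{j-1}|-|W|$ from the integer jump $\beta_{j-1}-1\ge\beta_j$. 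What this buys you is a self-contained argument that simultaneously delivers both ``$C_j$ is a maximizer'' and ``$C_j$ is the smallest maximizer'' in one stroke, at the cost of a slightly heavier calculation; the paper's route is shorter but relies on the lattice machinery already set up. One small expository point: your phrase ``the joint hypothesis at level $j$'' is ambiguous (it reads as though $\beta_{j-1}>\beta_j$ is being \emph{assumed} rather than proved at step $j$), though the subsequent paragraphs make the intended logic clear.
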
 
\begin{proof}
(1)
The monotonicity of the $\beta$-values is already shown in 
\cite[Theorem~5.5]{FM21partA}, but we give an alternative proof here.
Let $j \geq 2$.
By \eqref{betajdef}, we have $\beta_{j-1} > \beta_{j}$ 
if and only if
\begin{equation} \label{prf1betaj1betaj}
\beta_{j-1} > \left\lceil \frac{p(X \cup C_{j-1}) - p(C_{j-1})}{|X|} \right\rceil 
\end{equation}
for every $X$ with $\emptyset \not= X \subseteq \overline{C_{j-1}}$.
Furthermore, we can rewrite the condition \eqref{prf1betaj1betaj} as follows:
\begin{align*} 
\eqref{prf1betaj1betaj}
& \Leftrightarrow
\beta_{j-1} -1  \geq   \frac{p(X \cup C_{j-1}) - p(C_{j-1})}{|X|}  
\\ & \Leftrightarrow
p(X \cup C_{j-1}) - p(C_{j-1}) \leq (\beta_{j-1} -1 ) |X|
\\ & \Leftrightarrow
p(X \cup C_{j-1}) - (\beta_{j-1} -1 ) |X  \cup C_{j-1}| \leq  p(C_{j-1}) - (\beta_{j-1} -1 ) |C_{j-1}|.
\end{align*}
The last inequality holds for every $X \subseteq \overline{C_{j-1}}$,
since
$C_{j-1}$ is the (smallest) maximizer of 
$p(X) - (\beta_{j-1}-1) |X|$
among all $X$ containing $C_{j-2}$,
and the set $X \cup C_{j-1}$ does contain $C_{j-2}$.
We have thus shown $\beta_{j-1} > \beta_{j}$.

(2)
We prove $C_{j}= L(\beta_{j} -1)$ for $j=1,2,\ldots,q$ by induction on $j$. 
This holds for $j=1$ by definition.
Let $j \geq 2$.
By Proposition~\ref{PRbasicPrinPatR}(2)
for $\lambda = \beta_{j-1}-1$ and $\lambda' = \beta_{j}-1$,
we have 
$L(\beta_{j} -1) \supseteq L(\beta_{j-1} -1)$,
whereas
$L(\beta_{j-1} -1) =C_{j-1}$ by the induction hypothesis.
Combining this with \eqref{CjsmallestmaxCj1}, we obtain $C_{j}= L(\beta_{j} -1)$.
\qedJIAM
\end{proof}

We now give an alternative characterization of the essential value-sequence 
$\beta_{1} > \beta_{2} > \cdots > \beta_{q}$
defined by \eqref{betajdef}--\eqref{Cjdef}.
We consider the family $\{ L(\beta) : \beta \in \ZZ \}$
of the smallest maximizers of $p(X) - \beta |X|$ for all integers $\beta$.
Each $C_{j}$ is a member of this family, since
$C_{j} = L(\beta_{j} - 1)$ ($j=1,2,\ldots, q$)
by Proposition~\ref{PRbasicitera}(2).

\begin{proposition} \label{PRsequenceLbeta}
As $\beta$ is decreased from $+\infty$ to $-\infty$
(or from $\beta_{1}$ to $\beta_{q}-1$), 
the smallest maximizer $L(\beta)$
is monotone non-decreasing.
We have $L(\beta) \not= L(\beta -1)$ if and only if
$\beta$ is equal to an essential value.
Therefore, the essential value-sequence 
$\beta_{1} > \beta_{2} > \cdots > \beta_{q}$
is characterized by the property:
\begin{align}
& \emptyset= L(\beta_{1}) \subset L(\beta_{1}-1) 
= \cdots = L(\beta_{2}) \subset L(\beta_{2}-1) = \cdots
\nonumber \\ & \qquad
\cdots = L(\beta_{q}) \subset L(\beta_{q}-1) = S.
\label{LbetaChain}
\end{align}
\end{proposition}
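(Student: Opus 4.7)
The plan is to pin down $L(\beta)$ at every integer $\beta$, using the monotonicity statement of Proposition~\ref{PRbasicPrinPatR}(2) (whose proof applies verbatim to integer arguments) together with the identity $C_j = L(\beta_j - 1)$ from Proposition~\ref{PRbasicitera}(2). The monotonicity of $L(\beta)$ as $\beta$ decreases is immediate; the real content is to locate the jumps and verify they occur exactly at the essential values $\beta_1 > \beta_2 > \cdots > \beta_q$.

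The pivotal lemma I would establish is $L(\beta_j) = C_{j-1}$ for $j = 1, 2, \ldots, q$, with the convention $C_0 := \emptyset$. For $j = 1$, the definition \eqref{betajdef} yields $p(X) \leq \beta_1 |X|$ for every nonempty $X$, so $\emptyset$ is trivially the smallest maximizer of $p(X) - \beta_1|X|$. For $j \geq 2$, monotonicity combined with Proposition~\ref{PRbasicitera}(2) already sandwiches $C_{j-1} = L(\beta_{j-1} - 1) \subseteq L(\beta_j) \subseteq L(\beta_j - 1) = C_j$, so it suffices to show that $C_{j-1}$ itself is a maximizer of $f(X) := p(X) - \beta_j |X|$; minimality of $L(\beta_j)$ in the lattice of maximizers then forces $L(\beta_j) \subseteq C_{j-1}$.

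To show that $C_{j-1}$ maximizes $f$, I would argue in three cases. For $Y \supseteq C_{j-1}$, writing $Y = C_{j-1} \cup X$ with $X \subseteq \overline{C_{j-1}}$, definition \eqref{betajdef} gives $p(Y) - p(C_{j-1}) \leq \beta_j |X|$ and hence $f(Y) \leq f(C_{j-1})$. For $Y \subseteq C_{j-1}$, the fact that $C_{j-1}$ maximizes $p(X) - (\beta_{j-1} - 1)|X|$ yields $p(Y) - p(C_{j-1}) \leq (\beta_{j-1} - 1)(|Y| - |C_{j-1}|)$, and this implies $f(Y) \leq f(C_{j-1})$ because $|Y| - |C_{j-1}| \leq 0$ and $\beta_{j-1} - 1 \geq \beta_j$ by Proposition~\ref{PRbasicitera}(1). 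For an arbitrary $Y$, supermodularity of $f$ (inherited from $p$) gives $f(Y) + f(C_{j-1}) \leq f(Y \cup C_{j-1}) + f(Y \cap C_{j-1})$, and the two terms on the right are each $\leq f(C_{j-1})$ by the two previous cases.

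Once $L(\beta_j) = C_{j-1}$ is in hand for every $j$, the chain \eqref{LbetaChain} drops out by squeezing: for $\beta_{j+1} \leq \beta \leq \beta_j - 1$, monotonicity forces $C_j = L(\beta_j - 1) \subseteq L(\beta) \subseteq L(\beta_{j+1}) = C_j$, so $L(\beta) = C_j$ throughout this integer range, while $L(\beta) \subsetneq L(\beta - 1)$ occurs at and only at $\beta = \beta_j$. The extreme ranges $\beta > \beta_1$ and $\beta < \beta_q$ are handled analogously, giving $L(\beta) = \emptyset$ and $L(\beta) = S$, respectively. The main obstacle I anticipate is the general-$Y$ step of the pivotal lemma, where the supermodular exchange inequality must be paired carefully with the two one-sided bounds to yield a clean global maximality of $C_{j-1}$.
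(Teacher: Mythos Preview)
Your proof is correct and follows essentially the same route as the paper: establish $L(\beta_j) = C_{j-1}$ via the definition of $\beta_j$ together with supermodularity, then squeeze with monotonicity to pin down $L(\beta)$ on each integer interval. The only tactical difference is that the paper applies the supermodular inequality with $Y = L(\beta_j)$ itself (so only the superset bound $f(Y) \leq f(C_{j-1})$ for $Y \supseteq C_{j-1}$ is needed, bypassing your subset case), but in return must verify the strict inclusions $L(\beta_j) \subsetneq L(\beta_j - 1)$ by a separate argument, whereas you get them for free from $C_{j-1} \subsetneq C_j$.
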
 
\begin{proof}
The monotonicity of $L(\beta)$  follows from Proposition~\ref{PRbasicPrinPatR}(2).
We will show 
(i) $L(\beta_{1}) = \emptyset$,
(ii) $L(\beta_{j-1} -1) = L(\beta_{j})$
for $j=2,\ldots, q$, and
(iii) $L(\beta_{j}) \subset L(\beta_{j}-1)$ for $j=1,2,\ldots, q$.

(i)
Since 
$\beta_{1} = \max \left\{  \left\lceil  p(X)/|X|  \right\rceil  :  X  \not = \emptyset  \right\}$,
we have
$p(X) - \beta_{1} |X| \leq  0$ for all $X \not= \emptyset$,
whereas 
$p(X) - \beta_{1} |X| =  0$ for $X = \emptyset$.
Therefore, $L(\beta_{1}) = \emptyset$.

(ii)
Let $2 \leq j \leq q$.
For short we put $C := C_{j-1}$ and define 
\[
h(Y) := p(Y) - \beta_{j} |Y|
\]
for any subset $Y$ of $S$.
Let $A$ be the smallest  maximizer of $h$, which means 
$A = L(\beta_{j})$.
For any non-empty subset $X$ of $\overline{C} \ (= S - C)$
we have
\begin{align*}
& \beta_{j}
\geq    \left\lceil  \frac{p(X \cup C) - p(C)}{|X|}  \right\rceil 
\geq   \frac{p(X \cup C) - p(C)}{|X|} ,
\end{align*}
which implies
$p(X \cup C) - \beta_{j} |X  \cup C| \leq  p(C) - \beta_{j} |C|$,
that is,
\begin{align}
h(Y) \leq  h(C)
\qquad \mbox{for all \  $Y \supseteq C$}.
\label{prfhYhC}
\end{align}
By supermodularity of $p$ we have
\[
 h(A)  + h(C) \leq  h(A \cup C)  + h(A \cap C),
\]
whereas $h(C) \geq  h(A \cup C)$
by \eqref{prfhYhC}.
Therefore,
$ h(A)  \leq  h(A \cap C)$.
Since $A$ is the smallest maximizer of $h$,
this implies that $A = A \cap C$, i.e., $A \subseteq C$.
On recalling notations
$A = L(\beta_{j})$ and $C = C_{j-1}= L(\beta_{j-1} -1)$,
we obtain 
$L(\beta_{j}) \subseteq L(\beta_{j-1} -1)$.
We also have
$L(\beta_{j}) \supseteq L(\beta_{j-1} -1)$
by the monotonicity.
Therefore,
$L(\beta_{j}) = L(\beta_{j-1} -1)$.

(iii)
Let $1 \leq j \leq q$ and put $C := C_{j-1}$.
Take a non-empty subset $Z$ of $\overline{C}$ which
gives the maximum in the definition of $\beta_{j}$, i.e., 
\begin{align*}
& \beta_{j}
= \max \left\{  \left\lceil  \frac{p(X \cup C) - p(C)}{|X|}  \right\rceil    
 :  \emptyset \not= X \subseteq \overline{C}   \right\} 
 = \left\lceil  \frac{p(Z \cup C) - p(C)}{|Z|}  \right\rceil .
\end{align*}
Then we have
\begin{align*}
\frac{p(Z \cup C) - p(C)}{|Z|}  > \beta_{j} -1 ,
\end{align*}
which implies
\begin{align*}
p(Z \cup C) - (\beta_{j}-1) |Z \cup C| >  p(C) -  (\beta_{j}-1) |C|.
\end{align*}
This shows that 
$C = C_{j-1}= L(\beta_{j-1} -1)$
is not a maximizer of $p(X) - (\beta_{j}-1) |X|$,
and hence  $L(\beta_{j-1} -1) \not= L(\beta_{j}-1)$.
On the other hand, we have 
$L(\beta_{j-1} -1) = L(\beta_{j})$ by (ii) and 
$L(\beta_{j}) \subseteq L(\beta_{j}-1)$
by the monotonicity.
Therefore,
$L(\beta_{j}) \subset L(\beta_{j}-1)$.
\qedJIAM
\end{proof}

Proposition~\ref{PRsequenceLbeta} justifies the following alternative definitions
of the essential value-sequence, the canonical chain, and the canonical partition.

\begin{proposition} \label{PRaltcano}
The essential value-sequence, the canonical chain, and the canonical partition
can also be defined as follows:
\begin{quote}
Consider the smallest maximizer $L(\beta)$ of $p(X) - \beta |X|$ for each integer $\beta$.
There are finitely many $\beta$ for which
$L(\beta) \not= L(\beta -1)$.
Denote such integers as
$\beta_{1} > \beta_{2} > \cdots > \beta_{q}$
and call them the {essential value-sequence}.
Furthermore, define 
$C_{j} := L(\beta_{j}-1)$
for $j=1,2,\ldots, q$
to obtain a chain: 
$C_{1} \subset  C_{2} \subset  \cdots \subset  C_{q}$.
Call this  the {canonical chain}.
Finally define a partition  $\{ S_{1},  S_{2}, \ldots, S_{q} \}$
of $S$ by 
\begin{equation}  \label{canopatZdef2}
S_{j} := C_{j} -  C_{j-1} = L(\beta_{j} - 1) - L(\beta_{j})
\qquad (j=1,2,\ldots, q),
\end{equation}
where $C_{0} := \emptyset$,
and call this the {canonical partition}. 
\end{quote}
\vspace{-\baselineskip} 
\finboxARX
\end{proposition}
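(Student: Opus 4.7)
The plan is to observe that Proposition~\ref{PRaltcano} is essentially a reformulation of the structural facts already established in Propositions~\ref{PRbasicitera} and \ref{PRsequenceLbeta}; no new substantive argument is needed, so the task is mainly one of bookkeeping.

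First, I would verify that only finitely many integers $\beta$ satisfy $L(\beta) \neq L(\beta-1)$. This is immediate from the monotonicity $L(\beta) \supseteq L(\beta')$ for $\beta \leq \beta'$ (Proposition~\ref{PRbasicPrinPatR}(2)) together with the finiteness of $2^{S}$. Hence listing those $\beta$ in decreasing order produces a well-defined finite sequence, which one can then compare with the original $\beta_{1} > \beta_{2} > \cdots > \beta_{q}$.

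Next, I would invoke Proposition~\ref{PRsequenceLbeta}: the chain display \eqref{LbetaChain} establishes that the integers $\beta$ at which $L(\beta) \neq L(\beta-1)$ are precisely the essential values $\beta_{1} > \beta_{2} > \cdots > \beta_{q}$ as originally defined through \eqref{betajdef}. Hence the essential value-sequence produced by the alternative definition coincides with the original one.

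Finally, I would identify the canonical chain and partition. Proposition~\ref{PRbasicitera}(2) gives $C_{j} = L(\beta_{j} - 1)$, so the chain $C_{1} \subset C_{2} \subset \cdots \subset C_{q}$ obtained from the alternative prescription is the canonical chain, with strictness of each inclusion guaranteed by \eqref{LbetaChain}. For the partition formula \eqref{canopatZdef2}, I would combine $C_{j} = L(\beta_{j}-1)$ with the identity $C_{j-1} = L(\beta_{j-1}-1) = L(\beta_{j})$, which can be read off directly from \eqref{LbetaChain}; substituting into $S_{j} = C_{j} - C_{j-1}$ then yields $S_{j} = L(\beta_{j}-1) - L(\beta_{j})$, as desired. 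There is no real obstacle here: the proposition is a clean packaging of Propositions~\ref{PRbasicitera} and \ref{PRsequenceLbeta}, and the only thing a reader might want explicitly spelled out is the equality $L(\beta_{j-1}-1) = L(\beta_{j})$, which is precisely the content of the equalities in \eqref{LbetaChain}.
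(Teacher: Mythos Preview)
Your proposal is correct and matches the paper's approach exactly: the paper presents Proposition~\ref{PRaltcano} without a separate proof, simply stating that Proposition~\ref{PRsequenceLbeta} justifies the alternative definitions, and your write-up just makes explicit the bookkeeping (via Propositions~\ref{PRbasicitera} and \ref{PRsequenceLbeta}) that the paper leaves implicit.
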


This alternative construction clearly exhibits the parallelism between 
the canonical partition in Case~$\ZZ$ and the principal partition in Case~$\RR$.
In particular, the essential values are 
the discrete counterpart of the critical values.
This is discussed in the next section.

\subsection{Canonical partition from the principal partition}
\label{SCcanopatprinpat}

The characterization of the canonical partition shown in Section~\ref{SCcanopatNew}
enables us to construct 
the canonical partition and essential values for Case~$\ZZ$ 
from the principal partition and critical values for Case~$\RR$ as follows.

\begin{theorem} \label{THrelRZpartition}
Let $B$ be an integral base-polyhedron defined by an integer-valued supermodular
function $p$.

\noindent
{\rm (1)}
The essential values $\beta_{1} > \beta_{2} > \cdots > \beta_{q}$
are obtained from the critical values 
$\lambda_{1} > \lambda_{2} >  \cdots >  \lambda_{r}$
as the distinct members of the rounded-up integers
$\lceil \lambda_{1} \rceil \geq \lceil \lambda_{2} \rceil 
  \geq \cdots \geq \lceil \lambda_{r} \rceil$.
In particular,
an integer $\beta$ is an essential value if and only if there exists 
a critical value $\lambda$ satisfying $\beta  \geq \lambda > \beta -1$.

\noindent
{\rm (2)}
The canonical partition 
$\{ S_{1}, S_{2}, \ldots, S_{q} \}$
is an aggregation of the principal partition 
$\{ \hat S_{1}, \hat S_{2},   \allowbreak   \ldots, \hat S_{r} \}$
given by 
\begin{equation} \label{canpataggreprinpat}
 S_{j} = \bigcup_{i \in I(j)} \hat S_{i} 
\qquad (j=1,2,\ldots, q),
\end{equation}
where $I(j) := \{ i :  \lceil \lambda_{i} \rceil  = \beta_{j} \}$
for $j=1,2,\ldots, q$.

\noindent
{\rm (3)}
The canonical chain
$\{ C_{j} \}$
is a subchain of the principal chain  $\{ \hat C_{i} \}$;
we have $C_{j} = \hat C_{i}$ for $i = \max I(j)$.
\end{theorem}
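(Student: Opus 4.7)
The plan is to exploit the elementary but useful observation that the ``integer smallest maximizer'' $L(\beta)$ used to define the canonical partition (Section~\ref{SCcanopatNew}) and the ``real smallest maximizer'' $L(\lambda)$ associated with the principal partition (Section~\ref{SCprinpatR}) are literally the same function: the family $\mathcal{L}(\lambda)$ of maximizers of $p(X) - \lambda|X|$ over $X \subseteq S$ depends only on the value $\lambda \in \RR$, not on whether $\lambda$ is declared integer. Once this is recognized, the theorem amounts to comparing the ``jump set'' of $L$ restricted to integer arguments against its full jump set on $\RR$.

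First I would establish a piecewise-constant description of $L(\lambda)$ as a function of real $\lambda$, namely
\[
L(\lambda) = \hat{C}_{i} \qquad \text{for } \lambda \in [\lambda_{i+1}, \lambda_{i})
\quad (i = 0,1,\ldots, r),
\]
with the conventions $\lambda_{0} := +\infty$, $\lambda_{r+1} := -\infty$, $\hat{C}_{0} := \emptyset$. This is read off directly from the chain \eqref{LlambdaChain} and Proposition~\ref{PRbasicPrinPatR}(2): between consecutive critical values $L$ is constant, at $\lambda = \lambda_{i}$ it equals its value ``just above'' (namely $\hat{C}_{i-1}$), and the jump to $\hat{C}_{i}$ occurs infinitesimally below $\lambda_{i}$.

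Specializing to integer arguments, let $i(\beta)$ denote the number of critical values strictly greater than $\beta$, so that $L(\beta) = \hat{C}_{i(\beta)}$. Then
\[
i(\beta - 1) - i(\beta)
= |\{k : \beta - 1 < \lambda_{k} \leq \beta\}|
= |\{k : \lceil \lambda_{k} \rceil = \beta \}|,
\]
and hence $L(\beta) \neq L(\beta - 1)$ exactly when $\beta = \lceil \lambda_{k} \rceil$ for some $k$. Combined with the characterization of essential values in Proposition~\ref{PRaltcano}, this proves part~(1). Next, introduce the index blocks $I(j) := \{i : \lceil \lambda_{i} \rceil = \beta_{j}\}$; since $\lceil \lambda_{i} \rceil$ is weakly decreasing in $i$, each $I(j)$ consists of consecutive indices $\{a_{j}, \ldots, b_{j}\}$ with $b_{j-1} = a_{j} - 1$. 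The inequalities $\lambda_{b_{j}+1} \leq \beta_{j} - 1 < \lambda_{b_{j}}$ (using $\lambda_{r+1} := -\infty$), together with the piecewise-constant formula applied at $\lambda = \beta_{j} - 1$, give $C_{j} = L(\beta_{j}-1) = \hat{C}_{b_{j}}$, which is part~(3); part~(2) follows by $S_{j} = C_{j} - C_{j-1} = \hat{C}_{b_{j}} - \hat{C}_{a_{j}-1} = \bigcup_{i \in I(j)} \hat{S}_{i}$.

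The argument is not obstructed by a single conceptual hurdle --- the real content is concentrated in the unified view of $L$ as a single real-variable function whose jumps occur precisely at the critical values. What requires care is only the bookkeeping at boundary cases: when some $\lambda_{k}$ happens to be an integer (so $\lceil \lambda_{k} \rceil = \lambda_{k}$, and one must check that the half-open interval convention still lands $\beta = \lambda_{k}$ in the correct block) and at the extreme indices $i = 0, r$, where the $\pm\infty$ conventions must be applied consistently.
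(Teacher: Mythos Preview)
Your proposal is correct and follows essentially the same approach as the paper: both arguments rest on Proposition~\ref{PRaltcano}, recognizing that the canonical data are determined by the very same smallest-maximizer function $L(\cdot)$ that defines the principal data, and then reading off the claims from how $L$ jumps at the critical values. The only cosmetic difference is that you make the piecewise-constant description of $L$ explicit and prove (3) before (2), whereas the paper proves (2) first by a telescoping computation and then deduces (3) from it.
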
 
\begin{proof}
(1)
By Proposition~\ref{PRaltcano}, an integer $\beta$ 
is an essential value if and only if 
$L(\beta) \not= L(\beta -1)$,
whereas a real number
$\lambda$ is a critical value if and only if
$L(\lambda) \not= L(\lambda - \varepsilon)$
for any $\varepsilon > 0$.
Hence follows the claim.

(2)
For each  $j=1,2,\ldots, q$, 
let $i_{\max}$ and $i_{\min}$  denote the maximum and minimum elements of $I(j)$,
respectively.
By \eqref{prinpatRdef}
and \eqref{canopatZdef2} we have
\begin{align*}
\bigcup_{i \in I(j)} \hat S_{i} 
& =\bigcup_{i \in I(j)} ( L(\lambda_{i+1}) - L(\lambda_{i}) )
=  L(\lambda_{i_{\max} + 1}) - L(\lambda_{i_{\min}}) 
\\
& =  L(\lambda_{i_{\max}} - \varepsilon) - L(\lambda_{i_{\min}}) 
=  L(\beta_{j} - 1) - L(\beta_{j}) 
= S_{j}.
\end{align*}

(3)
By \eqref{canpataggreprinpat}
we have
\[
C_{j} = \bigcup_{k \leq j} S_{k}
= \bigcup_{k \leq j} \bigcup_{i \in I(k)} \hat S_{i} 
= \bigcup_{i \leq i_{\max} } \hat S_{i} 
= \hat C_{i_{\max}}.
\]
This completes the proof of Theorem~\ref{THrelRZpartition}.
\qedJIAM
\end{proof}

The following two examples illustrate Theorem~\ref{THrelRZpartition}.
The first example treats the simplest case 
to demonstrate the basic idea as well as the notation.
The second is a running example, 
to be considered repeatedly to illustrate our subsequent arguments.

\begin{example} \rm \label{EX2dimB1ppRZ}
Let $S = \{ s_{1} , s_{2} \}$ and $\odotZ{B} = \{ (0,3), (1,2), (2,1) \}$,
where $B$ is the line segment connecting $(0,3)$ and $(2,1)$.
There are two dec-min elements of $\odotZ{B}$, 
$m_{\ZZ}\sp{(1)}=(1,2)$ and $m_{\ZZ}\sp{(2)}=(2,1)$.
The minimum norm point (dec-min element) of $B$ is
$m_{\RR}=(3/2,3/2)$.
The supermodular function $p$ for $B=B'(p)$ is given by
\[
 p(\emptyset)= 0,  
\quad
 p(\{ s_{1} \})= 0,  
\quad
 p(\{ s_{2} \})= 1,  
\quad
 p(\{ s_{1},s_{2} \}) = 3,
\]
and we have
\begin{equation*} 
 p(X) - \lambda |X| = 
   \left\{  \begin{array}{ll}
    0   & (X = \emptyset),  \\
    -\lambda  & (X = \{ s_{1} \}),  \\
    1-\lambda  & (X = \{ s_{2} \}),  \\
    3- 2\lambda  & (X = \{ s_{1}, s_{2} \}).  \\
             \end{array}  \right.
\end{equation*}
There is only one ($r=1$) critical value $\lambda_{1} = 3/2$ and
the associated sublattice is $\mathcal{L}(\lambda_{1}) = \{ \emptyset, S \}$.
The principal partition is a trivial partition
$\{ \hat S_{1} \}$ 
consisting of only one member $\hat S_{1} = S$.
Since $\lceil \lambda_{1} \rceil = 2$, we have
$\beta_{1} = 2$ with $q=1$, and the (only) member $S_{1}$
in the canonical partition is given by 
$S_{1} = C_{1} = L(\beta_{1}-1)= L(1) = S$.
\finbox
\end{example}

\begin{example} \rm \label{EX4dimC4pat}
Let $S = \{ s_{1}, s_{2}, s_{3}, s_{4}  \}$ and consider five vectors
\begin{align*}
& m_{1}=(2,1,1,0), \ 
 m_{2}=(2,1,0,1), \
 m_{3}=(1,2,1,0), \
\\ &
 m_{4}=(1,2,0,1), \
 m_{5}=(2,2,0,0).
\end{align*}
These vectors are obtained by adding vector $(1,1,0,0)$ to
$(1,0,1,0)$,  $(1,0,0,1)$,   $(0,1,1,0)$,  $(0,1,0,1)$,  $(1,1,0,0)$, 
which are the characteristic vectors of bases of a rank-2 matroid on $S$.
Let $B$ denote the convex hull of $\{ m_{1}, m_{2}, \ldots, m_{5} \}$.
Then $B$ is an integral base-polyhedron and
$\odotZ{B} = \{ m_{1}, m_{2}, \ldots, m_{5} \}$ is an M-convex set.
The dec-min elements of $\odotZ{B}$  are 
$m_{1}$, $m_{2}$, $m_{3}$, and $m_{4}$,
whereas 
$ m_{5}=(2,2,0,0)$ is not dec-min.
The supermodular function $p$ for $B=B'(p)$ is given by
\begin{align*}
& p(\emptyset)= 0, \quad
p(\{ s_{1} \})= p(\{ s_{2} \})= 1, \quad  
p(\{ s_{3} \})= p(\{ s_{4} \})= 0,  
\\ & 
p(\{ s_{1},s_{2} \}) = 3, \quad  
p(\{ s_{3},s_{4} \}) = 0, \quad
\\ & 
p(\{ s_{1},s_{3} \}) =p(\{ s_{2},s_{3} \}) =p(\{ s_{1},s_{4} \}) =p(\{ s_{2},s_{4} \}) = 1,
\\ & 
p(\{ s_{1},s_{2}, s_{3} \}) =
p(\{ s_{1},s_{2}, s_{4} \}) = 3, \quad
\\ & 
p(\{ s_{1},s_{3}, s_{4} \}) =
p(\{ s_{2},s_{3}, s_{4} \}) = 2, \quad
\\ & 
p(\{ s_{1},s_{2},s_{3}, s_{4} \}) = 4.
\end{align*}
We have
\begin{equation*} \label{}  
 \max\{ p(X) - \lambda |X| : X \subseteq S \} = 
 \max\{ 0, \   1- \lambda, \  3- 2\lambda,  \   3- 3\lambda,  \   4- 4\lambda \}.  
\end{equation*}
There are two ($r=2$) critical values $\lambda_{1} = 3/2$ and $\lambda_{2} = 1/2$,
with the associated sublattices
$\mathcal{L}(\lambda_{1}) = \{ \emptyset, \{ s_{1},s_{2} \} \}$
and
$\mathcal{L}(\lambda_{2}) = \{\{ s_{1},s_{2} \}, S \}$.
The principal chain is given by
$\{ s_{1},s_{2} \} \subset \{ s_{1}, s_{2}, s_{3}, s_{4}  \}$,
and the principal partition is a bipartition with 
$\hat S_{1} = \{ s_{1},s_{2} \}$
and
$\hat S_{2} = \{ s_{3},s_{4} \}$.
The minimum norm point (unique dec-min element)
of the base-polyhedron $B$ 
is given by $m_{\RR} = (3/2, 3/2, 1/2, 1/2 )$
by Theorem~\ref{THlexoptbaseR}.
Since
$\lceil \lambda_{1} \rceil = 2$ and
$\lceil \lambda_{2} \rceil = 1$, we have
$\beta_{1} = 2$ and 
$\beta_{2} = 1$ with $q=2$.
The canonical chain consists of two members 
$C_{1} = L(\beta_{1}-1)= L(1) = \{ s_{1},s_{2} \}$
and
$C_{2} = L(\beta_{2}-1)= L(0) = S$.
Accordingly,  the canonical partition is given by 
$S_{1} = \{ s_{1},s_{2} \}$
and
$S_{2} = \{ s_{3},s_{4} \}$.
Any of $m_{1}, \ldots, m_{5}$ 
is near-uniform on $S_{1}$ and on $S_{2}$,
but $m_{5}$ is not dec-min because it fails to satisfy
the tightness condition $\widetilde m(C_{1})=p(C_{1})$.
\finbox
\end{example}

\section{Proximity results and continuous relaxation algorithm}
\label{SCproximity}

In general terms, a proximity result means a statement
that the solutions to discrete and continuous versions
of an optimization problem are 
geometrically close to each other.
In this section we obtain proximity results
for dec-min elements $m_{\ZZ}$ of an M-convex set $\odotZ{B}$.
We shall establish two proximity theorems,
which refer to two different continuous problems.
The first proximity theorem refers to the (fractional) dec-min element $m_{\RR}$ of 
the base-polyhedron $B$,
and the second to the minimizer of a piecewise-linear function associated with the square-sum 
$W(x) = \sum [x(s)\sp{2}:  s\in S]$.
Both types of proximity results will be used as a basis for 
the continuous relaxation algorithm
to be described in Section~\ref{SCalgRZnorm}.

\subsection{Proximity theorem using the fractional dec-min element}

Our first proximity theorem reveals the
geometric closeness
of the dec-min element $m_{\ZZ}$ of $\odotZ{B}$
to the dec-min element $m_{\RR}$ of $B$.
By Theorem~\ref{THlexoptbaseR},
the dec-min element $m_{\RR}$ of $B$ is uniform 
on each member $\hat S_{i}$ of the principal partition
($m_{\RR}(s) = \lambda_{i}$ for $s \in  \hat S_{i}$),
whereas the dec-min element $m_{\ZZ}$ of $\odotZ{B}$ is near-uniform 
on each member $S_{j}$ of the canonical partition
($m_{\ZZ}(s) \in \{  \beta_{j}, \beta_{j} -1 \}$ for $s \in S_{j}$)
by Theorem~\ref{THchardecminZ}.
Combining these results with Theorem~\ref{THrelRZpartition} 
connecting the principal and canonical partitions, 
we can obtain the following proximity theorem.

\begin{theorem} \label{THdecminproxS}
Let $m_{\RR}$ be the dec-min element 
(minimum norm point)
of an integral base-polyhedron $B$.
Then every dec-min element $m_{\ZZ}$ of the associated M-convex set $\odotZ{B}$ satisfies
\begin{equation}  \label{mRmZmR}
\left\lfloor m_{\RR} \right\rfloor \leq m_{\ZZ} \leq  \left\lceil m_{\RR} \right\rceil.
\end{equation}
\end{theorem}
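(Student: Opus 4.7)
The plan is to argue componentwise by following the correspondence between the canonical and principal partitions established in Theorem~\ref{THrelRZpartition}. Fix $s \in S$, and let $j$ and $i$ be determined by $s \in S_j$ and $s \in \hat S_i$; then $i \in I(j)$ and $\lceil \lambda_i \rceil = \beta_j$. By Theorems~\ref{THlexoptbaseR} and~\ref{THchardecminZ} one has $m_{\RR}(s) = \lambda_i$ and $m_{\ZZ}(s) \in \{\beta_j - 1,\, \beta_j\}$.

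The upper inequality is then immediate from $m_{\ZZ}(s) \leq \beta_j = \lceil \lambda_i \rceil = \lceil m_{\RR}(s) \rceil$. For the lower inequality there is no difficulty when $\lambda_i \notin \ZZ$, since then $\lfloor \lambda_i \rfloor = \beta_j - 1$ and we already have $m_{\ZZ}(s) \geq \beta_j - 1$. The whole issue therefore reduces to the single case in which $\lambda_i \in \ZZ$, where $\lambda_i = \beta_j$ and the a priori possible value $m_{\ZZ}(s) = \beta_j - 1$ must be ruled out.

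The key observation for this hard case is that if $\lambda_i = \beta_j$ is an integer, then $\lambda_i$ is the largest critical value in the block $I(j)$, so $i$ is the minimum of $I(j)$, and hence $\hat C_{i-1}$ coincides with $C_{j-1}$ by Theorem~\ref{THrelRZpartition}(3). This alignment makes $\hat C_{i-1}$ tight for both $m_{\RR}$ (by Theorem~\ref{THdecmincharR2}) and for $m_{\ZZ}$ (by Theorem~\ref{THchardecminZ}). The set $\hat C_i$ is a level set of $m_{\RR}$ and therefore also $m_{\RR}$-tight, whereas $m_{\ZZ}$ only satisfies the supermodular inequality $\widetilde m_{\ZZ}(\hat C_i) \geq p(\hat C_i)$. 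Subtracting the relations for $\hat C_{i-1}$ and $\hat C_i$ will yield
\[
\widetilde m_{\ZZ}(\hat S_i) \geq \widetilde m_{\RR}(\hat S_i) = \lambda_i |\hat S_i| = \beta_j |\hat S_i|,
\]
and the reverse inequality $\widetilde m_{\ZZ}(\hat S_i) \leq \beta_j |\hat S_i|$ is automatic because every $m_{\ZZ}(t)$ on $\hat S_i \subseteq S_j$ is at most $\beta_j$. Equality throughout will force $m_{\ZZ}(t) = \beta_j$ for every $t \in \hat S_i$; in particular $m_{\ZZ}(s) = \beta_j = \lfloor m_{\RR}(s) \rfloor$, which closes the hard case and completes the proof.
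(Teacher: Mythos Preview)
Your proof is correct, but it takes a different route from the paper for the lower inequality. The paper dispatches the lower bound in one line by a symmetry trick: it applies the already-proved upper bound to the base-polyhedron $-B$. Since $-m_{\RR}$ is the minimum norm point of $-B$ and $-m_{\ZZ}$ is dec-min (equivalently inc-max) in $-\odotZ{B}$, the upper bound gives $-m_{\ZZ} \leq \lceil -m_{\RR}\rceil$, which is exactly $m_{\ZZ} \geq \lfloor m_{\RR}\rfloor$.

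Your argument instead handles the lower bound directly, splitting into the easy case $\lambda_i\notin\ZZ$ (where $\lfloor\lambda_i\rfloor=\beta_j-1$) and the delicate case $\lambda_i\in\ZZ$, and in the latter you exploit the alignment $\hat C_{i-1}=C_{j-1}$ together with tightness to force $m_{\ZZ}\equiv\beta_j$ on all of $\hat S_i$. This is longer but it actually proves more: whenever a critical value $\lambda_i$ happens to be an integer, every dec-min element of $\odotZ{B}$ is exactly equal to $\lambda_i$ on the corresponding principal block $\hat S_i$, not merely bounded below by $\lambda_i-1$. The paper's symmetry argument, while slicker, does not immediately yield this refinement.
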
 
\begin{proof}
Fix $s \in S$ and let $\hat S_{i}$ denote the member of the principal partition 
containing $s$, and $\lambda_{i}$ be the associated critical value.
We have $m_{\RR}(s) = \lambda_{i}$ by Theorem~\ref{THlexoptbaseR}.
By Theorem~\ref{THrelRZpartition},
$\lceil \lambda_{i} \rceil$
is an essential value, say,
$\lceil \lambda_{i} \rceil = \beta_{j}$, where $1 \leq j \leq q$.
Since the canonical partition is an aggregation of the principal partition,
the corresponding member $S_{j}$ of the canonical partition
contains the element $s$. 
We have $m_{\ZZ}(s) \in \{  \beta_{j}, \beta_{j} -1 \}$ 
by Theorem~\ref{THchardecminZ}.
Therefore, 
$m_{\ZZ} \leq  \left\lceil m_{\RR} \right\rceil$.

Next we apply the above argument to $-B$, which is also an integral base-polyhedron.
Since $-m_{\RR}$ is the minimum norm point of $-B$ and 
$-m_{\ZZ}$ is a dec-min (=inc-max) element for $-\odotZ{B}$, we obtain
$-m_{\ZZ} \leq  \left\lceil -m_{\RR} \right\rceil$, which is equivalent to  
$m_{\ZZ} \geq  \left\lfloor  m_{\RR} \right\rfloor$.
This completes the proof of \eqref{mRmZmR}.
\qedJIAM
\end{proof}

The above theorem states that
every dec-min element $m_{\ZZ}$ of $\odotZ{B}$ is located near 
the dec-min element $m_{\RR}$ of $B$, satisfying
$\left\lfloor m_{\RR} \right\rfloor \leq m_{\ZZ} \leq  \left\lceil m_{\RR} \right\rceil$.
However, the converse is not true, that is,
not every member $m$ of $\odotZ{B}$
satisfying 
$\left\lfloor m_{\RR} \right\rfloor \leq m \leq  \left\lceil m_{\RR} \right\rceil$
is a dec-min element of $\odotZ{B}$.
This is demonstrated by the following example.

\begin{example} \rm \label{EX4dimC4}
Recall Example~\ref{EX4dimC4pat},
where
$\odotZ{B}$ consists of five vectors:
$m_{1}=(2,1,1,0)$, \
$m_{2}=(2,1,0,1)$, \
$m_{3}=(1,2,1,0)$, \
$m_{4}=(1,2,0,1)$, and 
$m_{5}=(2,2,0,0)$.
The first four members, $m_{1}$ to $m_{4}$, 
are the dec-min elements of the M-convex set $\odotZ{B}$,
whereas 
$m_{5}=(2,2,0,0)$ is not dec-min.
The unique dec-min element of the (integral) base-polyhedron $B$
is $m_{\RR} = (3/2, 3/2, 1/2, 1/2 )$, for which 
$\left\lfloor m_{\RR} \right\rfloor = (1,1,0,0)$ and 
$\left\lceil m_{\RR} \right\rceil = (2,2,1,1)$.
Every member $m$ of $\odotZ{B}$ satisfies 
 $(1,1,0,0) = \left\lfloor m_{\RR} \right\rfloor
     \leq m \leq  \left\lceil m_{\RR} \right\rceil  = (2,2,1,1)$.
In particular, $m_{5}=(2,2,0,0)$
satisfies $\left\lfloor m_{\RR} \right\rfloor \leq m_{5} \leq  \left\lceil m_{\RR} \right\rceil$,
but it is not a dec-min element of $\odotZ{B}$.
\finbox
\end{example}

There is another connection 
between the dec-min elements in the continuous and discrete cases.
While Theorem \ref{THdecminproxS} above prescribes 
a region (box) for $m_{\ZZ}$ in terms of $m_{\RR}$,
the following theorem is a statement in the reverse direction,
showing that $m_{\RR}$
is embraced by the dec-min elements $m_{\ZZ}$
for the discrete case.

\begin{theorem} \label{THmnormconvcombdmS}
The unique dec-min element $m_{\RR}$ of an integral base-polyhedron $B$
can be represented as a convex combination of 
the dec-min elements $m_{\ZZ}$ of the associated M-convex set $\odotZ{B}$.
\end{theorem}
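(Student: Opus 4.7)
The strategy is to describe the convex hull of the dec-min elements of $\odotZ{B}$ as an explicit polytope $Q$ and then verify that $m_{\RR} \in Q$. Let $\{C_{1},\ldots,C_{q}\}$ be the canonical chain, $\{S_{1},\ldots,S_{q}\}$ the canonical partition, and $\beta_{1}>\cdots>\beta_{q}$ the essential values for $\odotZ{B}$, and define
\[
Q := \{\, m \in B : \widetilde m(C_{j}) = p(C_{j}) \ (j=1,\ldots,q), \ \beta_{j}-1 \leq m(s) \leq \beta_{j} \ (s \in S_{j}) \,\}.
\]
By Theorem~\ref{THchardecminZ} the integer points of $Q$ are precisely the dec-min elements of $\odotZ{B}$.

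The first step is to show that $Q$ is an integral polytope, hence equals the convex hull of its integer points. The tightness equations $\widetilde m(C_{j}) = p(C_{j})$ cut out a face $F$ of $B$ that decomposes as a direct product $F = B_{1} \times \cdots \times B_{q}$, where each $B_{j}$ is the integral base-polyhedron on $S_{j}$ defined by the contracted-restricted supermodular function $p_{j}(X) := p(X \cup C_{j-1}) - p(C_{j-1})$ for $X \subseteq S_{j}$. Intersecting each $B_{j}$ with the integer box $[\beta_{j}-1,\beta_{j}]^{S_{j}}$ yields another integral base-polyhedron on $S_{j}$, and the product of these is $Q$, which is therefore integral.

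The second step is to verify $m_{\RR} \in Q$. Tightness $\widetilde{m_{\RR}}(C_{j}) = p(C_{j})$ follows from Theorem~\ref{THrelRZpartition}(3), which says each $C_{j}$ coincides with some member $\hat C_{i}$ of the principal chain, combined with Theorem~\ref{THdecmincharR2}, which says every such level set $\hat C_{i}$ is $m_{\RR}$-tight. For the box constraints, let $s \in S_{j}$ and let $\hat S_{i}$ be the member of the principal partition containing $s$, so that $i \in I(j)$; then $m_{\RR}(s) = \lambda_{i}$ by Theorem~\ref{THlexoptbaseR}(1), and $\lceil \lambda_{i} \rceil = \beta_{j}$ by Theorem~\ref{THrelRZpartition}(1) gives $\beta_{j} - 1 < \lambda_{i} \leq \beta_{j}$.

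Together these two steps yield that $m_{\RR}$ lies in $Q$, which equals the convex hull of the dec-min elements of $\odotZ{B}$, and this is the desired representation. The main technical obstacle is the integrality of $Q$ in the first step, which rests on the standard facts that a face of an integral base-polyhedron is again an integral base-polyhedron (with direct-product structure along a chain of tight sets) and that the intersection of an integral base-polyhedron with an integer box is an integral base-polyhedron. The remaining verifications are direct consequences of the principal/canonical correspondence established in Theorem~\ref{THrelRZpartition}.
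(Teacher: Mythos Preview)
Your proof is correct and follows essentially the same route as the paper's: your polytope $Q$ is exactly what the paper calls $B^{\bullet}=B^{\oplus}\cap T^{*}$, and both arguments verify $m_{\RR}\in B^{\bullet}$ via the principal/canonical correspondence (Theorems~\ref{THlexoptbaseR} and~\ref{THrelRZpartition}) and then use integrality of $B^{\bullet}$ together with the identification of its integer points as the dec-min elements of $\odotZ{B}$. The only cosmetic difference is that the paper cites \cite[Theorem~5.1]{FM21partA} for the integrality and vertex description of $B^{\bullet}$, whereas you supply the standard direct argument via the product decomposition along the canonical chain.
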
 
\begin{proof}
It was shown in 
\cite[Section~5.1]{FM21partA}
that the dec-min elements of $\odotZ{B}$
lie on the face $B\sp{\oplus}$ of $B$ 
defined by the canonical chain 
$C_{1} \subset  C_{2} \subset  \cdots \subset  C_{q}$.
This face is the intersection of $B$ with the hyperplanes 
\[
\{x\in \RR\sp{S}:  \widetilde x(C_{j}) =  p(C_{j}) \}
\quad (j=1,2,\ldots, q).
\]
On the other hand, 
it is known (\cite{Fuj80}, \cite[Section~9.2]{Fuj05book}) that
the minimum norm point $m_{\RR}$ of $B$
lies on the face of $B$ 
defined by the principal chain 
$\hat C_{1} \subset  \hat C_{2} \subset  \cdots \subset \hat C_{r}$,
which is the intersection of $B$ with the hyperplanes 
\[
\{x\in \RR\sp{S}:  \widetilde x(\hat C_{i}) =  p(\hat C_{i}) \}
\quad (i=1,2,\ldots, r).
\]
Since the principal chain is a refinement of
the canonical chain (Theorem~\ref{THrelRZpartition}),
the latter face is a face of $B\sp{\oplus}$.
Therefore, 
$m_{\RR}$ belongs to $B\sp{\oplus}$. 
The point $m_{\RR}$ also belongs to 
\[
T\sp{*} := \{x\in \RR\sp{S}:  \ \beta_{j}-1\leq x(s)\leq \beta_{j} 
  \ \ \mbox{for}\ s\in S_{j} \ (j=1,2,\dots ,q)\},
\]
since
$m_{\RR}(s) = \lambda_{i}$ for $s \in  \hat S_{i}$
(Theorem~\ref{THlexoptbaseR}) and 
$ S_{j}
 = \bigcup \{ \hat S_{i}:   \lceil \lambda_{i} \rceil  = \beta_{j} \}$ 
(Theorem~\ref{THrelRZpartition}).
Therefore,  $m_{\RR}$ is a member of $B\sp{\bullet} := B\sp{\oplus}\cap T\sp{*}$.
Here $B\sp{\bullet}$ is an integral base-polyhedron, and 
Theorem~5.1 of \cite{FM21partA} states that 
the vertices of $B\sp{\bullet}$ are precisely the dec-min elements of $\odotZ{B}$.
Therefore, $m_{\RR}$
can be represented as a convex combination of the dec-min elements of $\odotZ{B}$.
\qedJIAM
\end{proof}

\begin{remark} \rm  \label{RMcnvcmbM2}
The convex combination property
stated in Theorem~\ref{THmnormconvcombdmS}
is no longer true for the intersection of two integral base-polyhedra. 
See \cite[Example 7.1]{FM21partA}.
\finbox
\end{remark}

\subsection{Proximity theorem using a piecewise-linear square-sum minimizer}

Our second proximity theorem shows the 
geometric closeness
of the dec-min element $m_{\ZZ}$ of $\odotZ{B}$
to the minimizer of a piecewise-linear function,
to be denoted by $\overline{W}(x)$,
arising from the square-sum
$W(x) = \sum [x(s)\sp{2}:  s\in S]$.
Recall from
Theorem~\ref{THdecminZsqsum} that an element of  $\odotZ{B}$
is dec-min if and only if it is a minimizer of 
$W(x)$ over $\odotZ{B}$.

To define the piecewise-linear function $\overline{W}(x)$, 
we first consider a piecewise-linear extension
of the quadratic function $\varphi (k)=k\sp{2}$ 
in a single integer variable $k \in \ZZ$.
The piecewise-linear extension
$\overline{\varphi}: \RR \to \RR$
is a function in a real variable
whose graph consists of line segments connecting
$(k,k\sp{2})$ and $(k+1,(k+1)\sp{2})$ 
for all $k \in \ZZ$.
That is,
\begin{equation} \label{pclinphi}
  \overline{\varphi}(\xi) := (2k+1) |\xi| - k (k+1) 
\quad \mbox{with \  $k = \lfloor |\xi| \rfloor$}
\qquad (\xi \in \RR).
\end{equation}
It is noted that 
$\overline{\varphi}(\xi) = \xi\sp{2}$ for integers $\xi$ and
$\overline{\varphi}(\xi) > \xi\sp{2}$ for non-integral $\xi$\,;
for example, $\overline{\varphi}(1/2) = 1/2 > 1/4$.
The piecewise-linear 
function $\overline{W}(x)$ 
is defined by
\begin{equation} \label{pclinW}
 \overline{W}(x) := \sum [ \overline{\varphi} ( x(s) ) : s \in S ]
\qquad (x \in \RR\sp{S}).
\end{equation}
We have
$\overline{W}(x) = W(x)$ for integral vectors $x$
and
$\overline{W}(x) > W(x)$ for non-integral vectors $x$.

The following fact is implicit in the proof of 
\cite[Theorem~8.3]{Fuj05book}.

\begin{proposition}  \label{PRcnvminGroe}
The minimum value of $\overline{W}$ over $B$
is equal to the minimum square-sum on the M-convex set $\odotZ{B}$.
Moreover, for any minimizer $x_{\RR} \in \RR\sp{S}$ of the function
$\overline{W}$ over $B$,
there exists a minimizer $x_{\ZZ}$ of 
$\overline{W}$ over $B$ satisfying
$x_{\ZZ} \in \ZZ\sp{S}$ and
$\lfloor x_{\RR} \rfloor \leq x_{\ZZ}  \leq  \lceil x_{\RR} \rceil$.
\end{proposition}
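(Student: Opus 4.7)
The plan is to exploit the piecewise-linear structure of $\overline{W}$. Although $\overline{W}$ is a convex, globally piecewise-linear function on $\RR\sp{S}$, within any single integer unit box the univariate pieces $\overline{\varphi}$ are affine by construction \eqref{pclinphi}, so the separable sum $\overline{W}$ is in fact a \emph{linear} function on each such box. This observation will allow us to round any continuous minimizer of $\overline{W}$ to an integer one without changing the objective value.

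Concretely, I would fix a minimizer $x_{\RR}$ of $\overline{W}$ over $B$ and consider the integer box $K := \{ x \in \RR\sp{S} : \lfloor x_{\RR} \rfloor \le x \le \lceil x_{\RR} \rceil \}$. Next I would invoke the standard closure result that the intersection of an integral base-polyhedron with an integer box is itself a (possibly empty) integral base-polyhedron; this can be established by truncating the defining supermodular function $p$ against the bounds of $K$, and is catalogued among the closure properties of base-polyhedra in \cite{Fuj05book}. Because $x_{\RR} \in B \cap K$, the intersection is a nonempty integral base-polyhedron, and by the opening remark $\overline{W}$ is linear on it. Since $x_{\RR}$ already minimizes $\overline{W}$ over the larger set $B$, it minimizes $\overline{W}$ over $B \cap K$ as well; a linear function attains its minimum over a polytope at a vertex, and the vertices of the integral base-polyhedron $B \cap K$ are integer points. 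Any such integer vertex $x_{\ZZ}$ then lies in $\odotZ{B}$, obeys $\lfloor x_{\RR} \rfloor \le x_{\ZZ} \le \lceil x_{\RR} \rceil$, and satisfies $\overline{W}(x_{\ZZ}) = \overline{W}(x_{\RR})$, which settles the proximity assertion.

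To obtain the equality of minima, I would chain
\[
\min_{x \in B} \overline{W}(x) \;=\; \overline{W}(x_{\ZZ}) \;=\; W(x_{\ZZ}) \;\ge\; \min_{z \in \odotZ{B}} W(z) \;=\; \min_{z \in \odotZ{B}} \overline{W}(z) \;\ge\; \min_{x \in B} \overline{W}(x),
\]
where the middle equality uses $\overline{W} = W$ on integer vectors and the last inequality uses $\odotZ{B} \subseteq B$; all inequalities must then collapse to equalities. The main obstacle — everything else being forced by the linear/piecewise-linear geometry — is the justification that $B \cap K$ is itself an integral base-polyhedron, so the proof should open by carefully establishing this closure property before proceeding with the vertex argument.
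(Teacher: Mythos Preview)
Your proposal is correct and essentially the same as the paper's own proof: both intersect $B$ with the integer box around $x_{\RR}$, observe that $\overline{W}$ is affine on this box, note that the intersection is an integral base-polyhedron, and extract an integral vertex as $x_{\ZZ}$. The only cosmetic difference is that the paper phrases the vertex step via a convex-combination representation of $x_{\RR}$ (so every vertex in the representation is a minimizer) rather than the minimum-attained-at-a-vertex fact you use; the paper also simply asserts the integral-base-polyhedron closure property that you flag as the main thing to justify.
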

\begin{proof}
(This proof is essentially the same as the proof of \cite[Theorem~8.3]{Fuj05book}.)
Let $x_{\RR}$ be a minimizer of $\overline{W}$ over $B$,
and denote the intersection of $B$ with the box 
$ \{ x : \lfloor x_{\RR} \rfloor \leq x \leq  \lceil x_{\RR} \rceil\}$
by $B'$, which is also an integral base-polyhedron.
Since 
$\| \, \lceil x_{\RR} \rceil - \lfloor x_{\RR} \rfloor \, \|_{\infty} \leq 1$,
the function 
$\overline{W}$ is linear on $B'$.
This implies that
$x_{\RR}$ can be expressed as a convex combination 
of some vertices 
$z_{1}, z_{2},  \allowbreak  \ldots,  \allowbreak    z_{k}$ 
of $B'$
and the function value $\overline{W}(x_{\RR})$
is given by the corresponding convex combination 
of their function values.
That is,
\[
x_{\RR} = \sum_{i=1}\sp{k} \alpha_{i} z_{i},
\quad
\overline{W}(x_{\RR}) 
= \sum_{i=1}\sp{k} \alpha_{i} \overline{W}(z_{i}) ,
\]
where $\sum_{i=1}\sp{k} \alpha_{i} = 1$ and $\alpha_{i} > 0$ for all $i$.
Since $x_{\RR}$ is a minimizer of $\overline{W}$, we have
$\overline{W}(x_{\RR}) \leq  \overline{W}(z_{i})$ for all $i$,
and hence each $z_{i}$ is a minimizer of $\overline{W}$ over $B$,
for which $\overline{W}(x_{\RR}) = \overline{W}(z_{i})$.
Moreover, 
$z_{i}$ is an integral vector satisfying 
$\lfloor x_{\RR} \rfloor \leq z_{i}  \leq  \lceil x_{\RR} \rceil$.
Therefore, we can take any $z_{i}$ as $x_{\ZZ}$.
\qedJIAM
\end{proof}

By combining Proposition~\ref{PRcnvminGroe}
with Theorem \ref{THdecminZsqsum} 
(characterizing dec-min elements of $\odotZ{B}$
as square-sum minimizers), we obtain 
the following proximity statement.

\begin{theorem} \label{THdecminproxG}
For any minimizer $x_{\RR}$ 
of the function $\overline{W}$ over $B$, there exists 
a dec-min element $m_{\ZZ}$ of the associated M-convex set $\odotZ{B}$ satisfying
\begin{equation} \label{decminproxW}
\left\lfloor x_{\RR} \right\rfloor \leq m_{\ZZ} \leq  \left\lceil x_{\RR} \right\rceil .
\end{equation}
\end{theorem}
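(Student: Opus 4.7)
The plan is to derive this proximity result as a direct corollary of Proposition~\ref{PRcnvminGroe} combined with Theorem~\ref{THdecminZsqsum}. The key insight is that Proposition~\ref{PRcnvminGroe} already produces an integral minimizer $x_{\ZZ}$ of $\overline{W}$ over $B$ that lies in the desired box around $x_{\RR}$; the only thing left to verify is that this $x_{\ZZ}$ is actually a dec-min element of $\odotZ{B}$.

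First, I would apply Proposition~\ref{PRcnvminGroe} to the given $x_{\RR}$ to obtain an integral vector $x_{\ZZ} \in \ZZ\sp{S}$ which minimizes $\overline{W}$ over $B$ and satisfies $\lfloor x_{\RR} \rfloor \leq x_{\ZZ} \leq \lceil x_{\RR} \rceil$. Since $x_{\ZZ}$ is integral, by the defining property \eqref{pclinphi}--\eqref{pclinW} of $\overline{W}$, we have $\overline{W}(x_{\ZZ}) = W(x_{\ZZ})$, and more generally $\overline{W}(y) = W(y)$ for every $y \in \odotZ{B}$.

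Next I would show that $x_{\ZZ}$ is a square-sum minimizer on $\odotZ{B}$. Since $\overline{W}(y) \geq W(y)$ always, and $\overline{W} = W$ on $\ZZ\sp{S}$, for any $y \in \odotZ{B} \subseteq B$ we get
\begin{equation*}
W(x_{\ZZ}) = \overline{W}(x_{\ZZ}) \leq \overline{W}(y) = W(y),
\end{equation*}
where the middle inequality uses that $x_{\ZZ}$ minimizes $\overline{W}$ over $B$. Hence $x_{\ZZ}$ minimizes $W$ over $\odotZ{B}$. By Theorem~\ref{THdecminZsqsum}, this means that $x_{\ZZ}$ is a dec-min element of $\odotZ{B}$. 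Setting $m_{\ZZ} := x_{\ZZ}$ gives the desired element satisfying \eqref{decminproxW}.

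Since every nontrivial work is already done in Proposition~\ref{PRcnvminGroe} (the convex-combination/vertex argument for $\overline{W}$) and Theorem~\ref{THdecminZsqsum} (the square-sum characterization), there is essentially no obstacle; the proof is just a matter of bookkeeping between $W$ and $\overline{W}$ on integral versus real points. The only subtlety worth spelling out is that the equality of minima $\min_{B} \overline{W} = \min_{\odotZ{B}} W$ (stated in Proposition~\ref{PRcnvminGroe}) is what forces the integral minimizer of $\overline{W}$ over $B$ to also be a minimizer of $W$ over $\odotZ{B}$.
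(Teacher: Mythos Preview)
Your proof is correct and follows essentially the same route as the paper's own proof: apply Proposition~\ref{PRcnvminGroe} to obtain an integral minimizer $x_{\ZZ}$ of $\overline{W}$ over $B$ in the box $[\lfloor x_{\RR}\rfloor,\lceil x_{\RR}\rceil]$, observe that $\overline{W}=W$ on $\odotZ{B}$ so that $x_{\ZZ}$ minimizes $W$ over $\odotZ{B}$, and then invoke Theorem~\ref{THdecminZsqsum} to conclude that $x_{\ZZ}$ is dec-min. The only microscopic point you leave implicit is that $x_{\ZZ}\in\odotZ{B}$ (needed to call it a minimizer \emph{over} $\odotZ{B}$), but this is immediate since $x_{\ZZ}\in B$ as a minimizer of $\overline{W}$ over $B$ and $x_{\ZZ}\in\ZZ^{S}$.
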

\begin{proof}
By Proposition~\ref{PRcnvminGroe}, 
there exists an integral vector $x_{\ZZ}$
that minimizes $\overline{W}$ over $B$
and satisfies 
$\lfloor x_{\RR} \rfloor \leq x_{\ZZ}  \leq  \lceil x_{\RR} \rceil$.
Since $\overline{W}$ coincides with $W$ on $\odotZ{B}$,
this vector $x_{\ZZ}$ is a minimizer of $W$ over $\odotZ{B}$.
This implies, by Theorem \ref{THdecminZsqsum},
that $x_{\ZZ}$ is a dec-min element of $\odotZ{B}$
satisfying \eqref{decminproxW}.
Therefore, we can take this $x_{\ZZ}$ as $m_{\ZZ}$
in \eqref{decminproxW}.
\qedJIAM
\end{proof}

There are substantial differences 
between the two proximity results given in 
Theorem~\ref{THdecminproxG} and in Theorem~\ref{THdecminproxS}.
First, the vector $x_{\RR}$ 
in Theorem~\ref{THdecminproxG},
being an arbitrary minimizer of $\overline{W}$,
is not uniquely determined, whereas $m_{\RR}$ 
in Theorem~\ref{THdecminproxS}
denotes the unique dec-min element of $B$.
In particular, $x_{\RR}$ is not necessarily dec-min in $B$
(see Example \ref{EX4dimC4groen} below).
Second, the box
$\left\lfloor x_{\RR} \right\rfloor \leq x \leq  \left\lceil x_{\RR} \right\rceil$
in \eqref{decminproxW}
of Theorem~\ref{THdecminproxG}
may possibly miss some dec-min elements of $\odotZ{B}$
(see Example \ref{EX4dimC4groen}),
while the box 
$\left\lfloor m_{\RR} \right\rfloor \leq x \leq  \left\lceil m_{\RR} \right\rceil$
in \eqref{mRmZmR} of Theorem~\ref{THdecminproxS}
contains all dec-min elements of $\odotZ{B}$.

\begin{example} \rm \label{EX4dimC4groen}
We continue with the problem 
treated in Examples \ref{EX4dimC4pat} and \ref{EX4dimC4}.
The M-convex set $\odotZ{B}$ consists of five vectors:
$m_{1}=(2,1,1,0)$, \
$m_{2}=(2,1,0,1)$, \
$m_{3}=(1,2,1,0)$, \
$m_{4}=(1,2,0,1)$, and 
$m_{5}=(2,2,0,0)$.
We have $W(m_{i})= 6$ for $i=1,2,3,4$ and $W(m_{5})= 8$. 
Hence the minimum of $\overline{W}$ over $B$ is equal to 6.
Consider a vector $x_{\RR}=(2,1,1/3,2/3)$,
which is a minimizer of $\overline{W}(x)$ since
\[
\overline{W}(x_{\RR}) =
\overline{\varphi}(2)
+\overline{\varphi}(1)
+\overline{\varphi}(1/3)
+\overline{\varphi}(2/3) = 4 + 1 + 1/3 + 2/3 = 6.
\]
For this vector, we have
$\lfloor x_{\RR} \rfloor =  (2,1,0,0)$ and 
$\lceil x_{\RR} \rceil  = (2,1,1,1)$.
The box
$\left\lfloor x_{\RR} \right\rfloor \leq x \leq  \left\lceil x_{\RR} \right\rceil$
contains
$m_{1}=(2,1,1,0)$ and $m_{2}=(2,1,0,1)$,
but misses the other two dec-min elements, $m_{3}$ and $m_{4}$, of $\odotZ{B}$.
Another possible choice of a minimizer of $\overline{W}$
is  the minimum norm point $m_{\RR} = (3/2, 3/2, 1/2, 1/2 )$,
which is indeed a minimizer of $\overline{W}$ 
(see Proposition~\ref{PRminnormminPLsqsum} below)
with
\[
\overline{W}(m_{\RR}) =
\overline{\varphi}(3/2)
+\overline{\varphi}(3/2)
+\overline{\varphi}(1/2)
+\overline{\varphi}(1/2) = 5/2 + 5/2 + 1/2 + 1/2 = 6 .
\]
For this vector, we have
$\lfloor m_{\RR} \rfloor =  (1,1,0,0)$ and
$\lceil m_{\RR} \rceil  = (2,2,1,1)$.
The box
$\left\lfloor m_{\RR} \right\rfloor \leq x \leq  \left\lceil m_{\RR} \right\rceil$
contains all the four dec-min elements, and additionally, 
$m_{5}=(2,2,0,0)$ (which is not dec-min). 
\finbox
\end{example}

We point out here that
the minimum norm point $m_{\RR}$, which is
the unique minimizer of the square-sum $W(x)$ over $B$, 
is also a minimizer of the associated piecewise-linear function
$\overline{W}$.
This fact is quite natural to expect, but it is a non-trivial fact
whose proof
relies on the property (Theorem~\ref{THmnormconvcombdmS})
that $m_{\RR}$ lies in the convex hull of the dec-min elements of $\odotZ{B}$.

\begin{proposition} \label{PRminnormminPLsqsum}
The minimum norm point $m_{\RR}$ of $B$
is a minimizer of the piecewise-linear function $\overline{W}$ on $B$.
\end{proposition}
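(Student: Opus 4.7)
The plan is to combine Theorem~\ref{THmnormconvcombdmS} with Theorem~\ref{THdecminproxS} to localize the argument in a small box around $m_{\RR}$ on which $\overline{W}$ is affine. Once affinity is in hand, $\overline{W}(m_{\RR})$ will be expressible as a convex combination of $\overline{W}$-values at dec-min elements of $\odotZ{B}$, and these common values coincide with $\min_{B}\overline{W}$ by Proposition~\ref{PRcnvminGroe}.

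First I would invoke Theorem~\ref{THmnormconvcombdmS} to write $m_{\RR} = \sum_{i} \alpha_{i} m_{i}$ as a convex combination in which each $m_{i}$ is a dec-min element of $\odotZ{B}$. Next, by Theorem~\ref{THdecminproxS}, each such $m_{i}$ lies in the box $[\lfloor m_{\RR} \rfloor,\lceil m_{\RR} \rceil]$, so for every $s \in S$ the value $m_{i}(s)$ belongs to the interval $[\lfloor m_{\RR}(s) \rfloor, \lceil m_{\RR}(s) \rceil]$, which has length at most one. On any such interval the function $\overline{\varphi}$ is affine, since its breakpoints sit precisely at the integers. Summing over $s$, the piecewise-linear function $\overline{W}$ is therefore affine on the entire box, and in particular
\[
\overline{W}(m_{\RR}) \;=\; \overline{W}\!\left( \sum_{i} \alpha_{i}\, m_{i} \right) \;=\; \sum_{i} \alpha_{i}\, \overline{W}(m_{i}) \;=\; \sum_{i} \alpha_{i}\, W(m_{i}),
\]
where the last equality uses $\overline{W}(m_{i})=W(m_{i})$ for integral $m_{i}$.

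To finish, I would apply Theorem~\ref{THdecminZsqsum} to conclude that every dec-min $m_{i}$ realizes the minimum of $W$ on $\odotZ{B}$, and then Proposition~\ref{PRcnvminGroe} to identify this common minimum value with $\min_{x \in B}\overline{W}(x)$. Combining, $\overline{W}(m_{\RR}) = \min_{B}\overline{W}$, as required. The only potentially delicate point is recognizing that one must use \emph{both} the convex-combination representation of Theorem~\ref{THmnormconvcombdmS} and the box-containment of Theorem~\ref{THdecminproxS} simultaneously: only then do all the integer points $m_{i}$ appearing in the combination lie in a common box on which $\overline{W}$ is actually affine, which is precisely what lets the convex combination pass through $\overline{W}$.
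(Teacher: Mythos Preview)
Your proof is correct and follows essentially the same approach as the paper: represent $m_{\RR}$ as a convex combination of dec-min elements of $\odotZ{B}$ via Theorem~\ref{THmnormconvcombdmS}, observe that $\overline{W}$ is affine on a suitable unit box containing all of them, and conclude by identifying $W(m_i)$ with $\min_B \overline{W}$.

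The only minor difference is in how affinity is justified. You invoke Theorem~\ref{THdecminproxS} to place every dec-min element inside the box $[\lfloor m_{\RR}\rfloor,\lceil m_{\RR}\rceil]$, whereas the paper instead uses the matroidal structure of the dec-min elements (Theorem~\ref{THmatroideltolt}), namely that any two satisfy $\|m_i-m_j\|_\infty\leq 1$, which already forces them into a common integer unit cube. Likewise, the paper appeals directly to the definition of $\overline{W}$ and the integrality of $B$ for the final equality, while you cite Proposition~\ref{PRcnvminGroe}; both are equivalent here.
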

\begin{proof}
Theorem~\ref{THmnormconvcombdmS} ensures a convex combination 
$m_{\RR}$ $ = \sum_{i=1}\sp{k} \alpha_{i} m_{i}$,
where
$m_{1},m_{2}, \ldots, m_{k}$ are dec-min elements of $\odotZ{B}$,
$\sum_{i=1}\sp{k} \alpha_{i} = 1$, and $\alpha_{i} > 0$ for all $i$.
Since 
$\| m_{i} - m_{j}  \|_{\infty} \leq 1$,
the function 
$\overline{W}$ is linear on 
the convex hull of 
$m_{1},m_{2}, \ldots, m_{k}$,
from which follows that 
\[
\overline{W}(m_{\RR}) 
= \sum_{i=1}\sp{k} \alpha_{i} \overline{W}(m_{i})
= \sum_{i=1}\sp{k} \alpha_{i} W(m_{i}).
\]
Here each of
$m_{1},m_{2}, \ldots, m_{k}$
is a minimizer of $W$ (by Theorem \ref{THdecminZsqsum}) and
$\min \{ \overline{W}(x) : x \in B \} 
= \min \{ W(x) : z \in \odotZ{B} \}$
by the definition of $\overline{W}$
and the integrality of $B$.
Therefore, $m_{\RR}$ is a minimizer of $\overline{W}$.
\qedJIAM
\end{proof}

\subsection{Continuous relaxation algorithm}
\label{SCalgRZnorm}

In our continuous relaxation algorithm
for computing a dec-min element of $\odotZ{B}$,
it is assumed that we are given a real vector 
$x\sp{*}$ such that
the box bounded by 
$\left\lfloor x\sp{*} \right\rfloor$ and $\left\lceil x\sp{*} \right\rceil$
contains at least one dec-min element of $\odotZ{B}$.
That is, we assume that 
\begin{equation} \label{decminproxA}
\left\lfloor x\sp{*} \right\rfloor \leq m_{\ZZ} \leq  \left\lceil x\sp{*} \right\rceil 
\end{equation}
holds for some dec-min element $m_{\ZZ}$ of $\odotZ{B}$.
By Theorem~\ref{THdecminproxS} 
the minimum norm point $m_{\RR}$ of $B$
serves as such $x\sp{*}$.
Another possibility for $x\sp{*}$ is 
an arbitrary minimizer $x_{\RR}$ of the piecewise-linear function $\overline{W}(x)$,
as shown in Theorem~\ref{THdecminproxG}.
The algorithm of this section relies only on the property
\eqref{decminproxA}
of the vector $x\sp{*}$,
which is regarded as an input of the algorithm.
At the end of this section, we indicate
some references concerning the computation of $m_{\RR}$ and $x_{\RR}$
in Remarks \ref{RMminnormcomp} and \ref{RMcontrelalgKSI}, respectively.

Using the given vector $x\sp{*}$ satisfying \eqref{decminproxA}, define 
\begin{equation}  \label{ludefnorm}
\ell := \left\lfloor x\sp{*} \right\rfloor,
\quad
u := \left\lceil x\sp{*} \right\rceil ,
\end{equation}
and denote the intersection of $\odotZ{B}$ and box $[\ell, u]$ 
by $\odotZ{B_{\ell}\sp{u}}$,
that is,
\begin{equation}  \label{dotBul}
 \odotZ{B_{\ell}\sp{u}} :=  \odotZ{B} \cap \{ x :   \ell \leq x \leq u \} .
\end{equation}
By our assumption, $\odotZ{B_{\ell}\sp{u}}$ contains at least one 
dec-min element of $\odotZ{B}$.
This implies that 
we can find a dec-min element of $\odotZ{B}$ 
by computing a dec-min element of $\odotZ{B_{\ell}\sp{u}}$.
Thus the dec-min problem on the M-convex set $\odotZ{B}$
is reduced to that on a smaller M-convex set $\odotZ{B_{\ell}\sp{u}}$.

A dec-min element of $\odotZ{B_{\ell}\sp{u}}$ can be computed as follows.
Since $\bm{0} \leq u - \ell \leq \bm{1}$,
the set $\odotZ{B_{\ell}\sp{u}}$ 
is a matroidal M-convex set, and hence it can be represented as
\[
\odotZ{B_{\ell}\sp{u}} = \{ \ell + \chi_{L}: 
\mbox{$L$ is a base of $M\sp{\bullet}$} \}
\]
for some matroid $M\sp{\bullet}$ on $S$.
Define a weight function 
$\omega: S \to \ZZ$ by
\begin{equation} \label{weightdef}
 \omega(s) := u(s)\sp{2} - \ell(s)\sp{2}
 \quad (s \in S).
\end{equation}
Then the square-sum $W(x)$ of 
$x = \ell + \chi_{L} \in \odotZ{B_{\ell}\sp{u}}$
can be expressed as
\begin{equation*} 
W(x) = \sum_{s \in S} x(s)\sp{2} 
 = \sum_{s \in L} u(s)\sp{2} + \sum_{s \in S-L} \ell(s)\sp{2}  
 = \widetilde \omega(L) + \sum_{s \in S} \ell(s)\sp{2}  .
\end{equation*}
This shows that minimizing $W(x)$ 
over $\odotZ{B_{\ell}\sp{u}}$
is equivalent to finding a minimum $\omega$-weight base of $M\sp{\bullet}$,
whereas, by Theorem~\ref{THdecminZsqsum},
an element of  $\odotZ{B_{\ell}\sp{u}}$
is a minimizer of $W(x)$ over $\odotZ{B_{\ell}\sp{u}}$
if and only if it is dec-min in $\odotZ{B_{\ell}\sp{u}}$.
Therefore, a dec-min element of $\odotZ{B_{\ell}\sp{u}}$ 
can be computed by finding a minimum $\omega$-weight base of matroid $M\sp{\bullet}$.
The latter can be done in strongly polynomial time 
by the greedy algorithm (see, e.g., \cite{Fra11book,Sch03}).
In order to apply the greedy algorithm, one needs an evaluation oracle that
outputs the rank $r\sp{\bullet}(Z)$ of any input subset $Z\subseteq S$
in matroid $M\sp{\bullet}$.
But $r\sp{\bullet}(Z)$ can be computed from the supermodular function
$p$ associated with $\odotZ{B}$ and from the bounding vectors $\ell$
and $u$ defined in \eqref{ludefnorm} 
with the aid of a submodular function minimization algorithm.
Therefore, the above procedure, when $x\sp{*}$ is given,
finds a dec-min element of $\odotZ{B}$ 
in strongly polynomial time.

\begin{example} \rm \label{EX4dimC4relax}
We illustrate the continuous relaxation algorithm for 
the problem considered in Examples~\ref{EX4dimC4pat}, \ref{EX4dimC4}, and \ref{EX4dimC4groen},
where
$\odotZ{B}$ consists of five vectors:
$m_{1}=(2,1,1,0)$, \
$m_{2}=(2,1,0,1)$, \
$m_{3}=(1,2,1,0)$, \
$m_{4}=(1,2,0,1)$, and 
$m_{5}=(2,2,0,0)$;
$m_{k}$ is dec-min for $k=1,2,3,4$, while $m_{5}$ is not.

Suppose first that 
the minimum norm point $m_{\RR} = (3/2, 3/2, 1/2, 1/2 )$
is chosen as $x\sp{*}$.
Then we obtain 
$\ell = \lfloor m_{\RR} \rfloor = (1,1,0,0)$ and 
$u = \lceil m_{\RR} \rceil =(2,2,1,1)$, and hence 
$\odotZ{B_{\ell}\sp{u}} = \odotZ{B}$ and
$\omega = (3,3,1,1)$. 
For $x = m_{1}$
we have
$m_{1} - \ell = (2,1,1,0) - (1,1,0,0) = (1,0,1,0) = \chi_{L_{1}}$
with $L_{1} = \{ s_{1}, s_{3} \}$ and 
$\widetilde \omega(L_{1}) = 3 + 0 + 1 + 0 = 4$.
Similarly, we have
$m_{k} - \ell = \chi_{L_{k}}$ with $\widetilde \omega(L_{k}) = 4$ for $k=2,3,4$.
For $x = m_{5}$  we have
$m_{5} - \ell = (2,2,0,0) - (1,1,0,0) = (1,1,0,0) = \chi_{L_{5}}$
with $L_{5} = \{ s_{1}, s_{2} \}$ and 
$\widetilde \omega(L_{5}) = 3 + 3 + 0 + 0 = 6$.
Therefore, 
$L_{k}$ is a minimum $\omega$-weight base for $k=1,2,3,4$,
while $L_{5}$ is not.
In other words, $m_{k}$ is dec-min in $\odotZ{B}$
for $k=1,2,3,4$, while $m_{5}$ is not.

As the second choice of $x\sp{*}$,
consider a minimizer $x_{\RR}=(2,1,1/3,2/3)$ of $\overline{W}(x)$,
for which 
$\ell = \lfloor x_{\RR} \rfloor =  (2,1,0,0)$,
$u = \lceil x_{\RR} \rceil  = (2,1,1,1)$,
and hence
$\odotZ{B_{\ell}\sp{u}} = \{ m_{1}, m_{2} \}$
and $\omega = (0,0,1,1)$. 
We have
$m_{1} - \ell = (0,0,1,0) = \chi_{L_{1}}$
with $L_{1} = \{  s_{3} \}$ and 
$\widetilde \omega(L_{1}) = 1$,
while
$m_{2} - \ell = (0,0,0,1) = \chi_{L_{2}}$
with $L_{2} = \{  s_{4} \}$ and 
$\widetilde \omega(L_{2}) = 1$.
Therefore, we can conclude that both 
$m_{1}$ and $m_{2}$
are dec-min in $\odotZ{B}$.
The other two dec-min elements of $\odotZ{B}$, 
$m_{3}$ and $m_{4}$, are not captured 
when $x\sp{*} = x_{\RR}=(2,1,1/3,2/3)$.
\finbox
\end{example}

\begin{remark} \rm  \label{RMcheapdmprox} 
The continuous relaxation algorithm using the minimum norm point $m_{\RR}$
can cope with the minimum cost dec-min element problem
(Section \ref{SCcmparUni}),
since all dec-min elements of $\odotZ{B}$ 
are captured by $\odotZ{B_{\ell}\sp{u}}$
(Theorem~\ref{THdecminproxS}).
In contrast, the continuous relaxation algorithm 
using a minimizer $x_{\RR}$ of $\overline{W}(x)$
cannot be used to solve this problem.
\finbox
\end{remark}

\begin{remark} \rm  \label{RMminnormcomp}
Several different algorithms are known for computing 
the minimum norm point $m_{\RR}$,
with varying theoretical complexity and practical efficiency. 
Fujishige's decomposition algorithm \cite{Fuj80}
(also \cite[Section~8.2]{Fuj05book})
computes $m_{\RR}$ in  strongly polynomial time.
We can also compute $m_{\RR}$ by 
Wolfe's minimum norm point algorithm \cite{Wol76}
 tailored to base-polyhedra \cite[Section~7.1]{Fuj05book},
which algorithm has drawn a renewed interest 
as a practically effective subroutine in submodular function minimization
(Chakrabarty--Jain--Kothari \cite{CJK14}, 
De Loera--Haddock--Rademacher \cite{DLHR20}, 
Fujishige--Isotani \cite{FI11minnorm}).
\finbox
\end{remark}

\begin{remark}\rm  \label{RMcontrelalgKSI} 
Minimization of a separable convex function on a base-polyhedron 
has been investigated in the literature of resource allocation
under the name of ``resource allocation problems under submodular constraints''
(Hochbaum \cite{Hoc07}, Ibaraki--Katoh \cite{IK88}, 
Katoh--Shioura--Ibaraki \cite{KSI13}).
The continuous relaxation approach for discrete variables
is considered, e.g., by Hochbaum \cite{Hoc94} and Hochbaum--Hong \cite{HH95}.
A paper by Moriguchi--Shioura--Tsuchimura \cite{MST11Mrelax}
discusses this approach in the more general context of M-convex function minimization.
It is known (\cite{HH95,MST11Mrelax,Tam93res}, \cite[Theorem 23]{KSI13})
that a convex quadratic function
$\sum a_{i} x_{i}\sp{2}$ 
in discrete variables can be minimized over an integral base-polyhedron
in strongly polynomial time.
See  V{\'e}gh \cite{Veg16} for a recent development 
on the complexity of separable convex function minimization.
\finbox
\end{remark}

\section{Decomposition algorithms for square-sum minimization on an M-convex set}
\label{SCdecalg}

\subsection{General remarks}
\label{SCdecalgGen}

As a continuation from Section~\ref{SCcmparAlg},
we shall present details of the decomposition algorithms
for square-sum minimization on an M-convex set.
Recall from Theorem~\ref{THdecminZsqsum} that
minimizing the square-sum 
$W(x) = \sum [x(s)\sp{2}:  s\in S]$
on an M-convex set $\odotZ{B}$
is equivalent to 
computing a dec-min element of $\odotZ{B}$.

In order to present a clear overview of the existing approaches,
we consider two variants of 
the decomposition algorithms,
one based on Groenevelt \cite{Gro91} and the other based on
Fujishige \cite[Section~8.2]{Fuj05book}
(see Table~\ref{TBhistoryDA}).
Although the same name of ``decomposition algorithm'' is used,
Groenevelt's and Fujishige's are not exactly the same.
To be specific,  Groenevelt's algorithm features a subproblem named
``single constraint problem'' and 
decomposes the ground-set into two disjoint subsets,
whereas
Fujishige's algorithm uses a variable 
corresponding to a subgradient 
(denoted $\eta$ in \cite[Section~8.2]{Fuj05book}) 
and decomposes the ground-set into three disjoint subsets.
It is noted that Fujishige's original algorithm \cite{Fuj80},
targeted to quadratic functions in continuous variables, was based 
on a simplest special case of
``single constraint problem'' (without using this terminology)
and decomposes the ground-set into two disjoint subsets.

In Section~\ref{SCdecalgG}, 
we show an adaptation of Groenevelt's decomposition algorithm 
to square-sum minimization on an M-convex set,
and call it the ``Groenevelt-type decomposition algorithm.''
In its original form, 
Groenevelt's algorithm \cite{Gro91} for Case~$\ZZ$
dealt with an integral polymatroid (not an integral base-polyhedron).
Although the adaptation to an M-convex set is not that difficult, 
it will be important to have a precise description of the algorithm at hand
along with proofs of correctness and strong polynomiality 
of the algorithm.

In Section~\ref{SCdecalgF}, 
we develop another decomposition algorithm 
for square-sum minimization on an M-convex set
on the basis of the framework of Fujishige \cite[Section~8.2]{Fuj05book}
for separable convex minimization on a base-polyhedron.
We refer to the resulting algorithm as the ``Fujishige-type decomposition algorithm.''
As suggested in \cite[Section~8.3]{Fuj05book},
the framework of \cite[Section~8.2]{Fuj05book} for Case~$\RR$ 
can be adapted to Case~$\ZZ$ with the use of the piecewise-linear extension
$\overline{W}(x)$ in \eqref{pclinW}.
Our contribution consists in devising a concrete computational procedure
for a key subroutine assumed in the general framework,
giving self-contained rigorous proofs
 of correctness and  strong polynomiality of the algorithm,
and revealing the relation between the computed decomposition 
and the canonical chain
as well as the certifying chain in Theorem \ref{THdecmincharZ}.

In relation to the structural and algorithmic results in \cite{FM21partA,FM21partB},
the two decomposition algorithms, 
Groenevelt-type and Fujishige-type,
have the following characteristics.

\begin{itemize} 
\item
Unlike the algorithm in \cite{FM21partB},
the two decomposition algorithms
do not rely on the Newton--Dinkelbach algorithm.

\item
The two decomposition algorithms
decompose the ground-set, but the resulting decomposition 
may or may not coincide with the canonical partition,
whereas the algorithm in \cite{FM21partB} iteratively construct
the canonical partition.
It should be noted, however, that
if a single dec-min element of an M-convex set is available, 
the canonical partition can be computed quite easily \cite[Algorithm 2.3]{FM21partB}.

\item
The Groenevelt-type algorithm is simpler than the Fujishige-type,
both in steps of the algorithm and the proof of correctness,
and is rather independent of the structural results found in \cite{FM21partA}.
In contrast, the decomposition computed in  the Fujishige-type algorithm
is consistent with a characterization of dec-minimality given  
in \cite{FM21partA}, as pointed out in Remark~\ref{RMdecalgFpat}.
\end{itemize}

\subsection{Groenevelt-type decomposition algorithm}
\label{SCdecalgG}

In this section, we show an adaptation of Groenevelt's decomposition algorithm 
to square-sum minimization on an M-convex set,
which we call ``Groenevelt-type decomposition algorithm'' in this paper.

Let $B = B'(p)$ be an integral base-polyhedron on a ground-set $S$
described by an integer-valued supermodular function $p$,
and $\odotZ{B} = B \cap \ZZ\sp{S}$ be the associated M-convex set.
For any subset $S_{+}$ of $S$,
the restriction of $B$ to $S_{+}$ 
means the base-polyhedron $B_{+} := B'(p_{+})$ 
described by the supermodular function $p_{+}$ defined by
$p_{+}(X) = p(X)$ for $X  \subseteq S_{+}$.
For any subset $S_{-}$ of $S$,
the contraction of $B$ to $S_{-}$ 
is the base-polyhedron $B_{-} := B'(p_{-})$ 
described by the supermodular function $p_{-}$ defined by
$p_{-}(X) = p(X \cup (S - S_{-})) - p(S - S_{-})$ for $X  \subseteq S_{-}$.
We also define the supermodular polyhedron
\begin{equation} \label{grosQdef}
Q = \{ y \in \RR\sp{S} : y(X) \geq p(X) \ (\forall X \subseteq S)  \},
\end{equation}
which can also be defined as 
$Q = \{ y \in \RR\sp{S} : y \geq x  \ \mbox{ for some $x \in B$}  \}$.

\medskip

Groenevelt's framework employs an auxiliary subproblem,
called ``single constraint problem.''
In our case of square-sum minimization, this subproblem is given by
\begin{equation} \label{grosnglM}
 \mbox{Minimize} \ \   \sum_{s \in S} x(s)\sp{2}
\quad \mbox{ subject to} \quad
x \in \ZZ\sp{S}, \  x(S) = p(S) ,
\end{equation}
which (fortunately) admits an explicit solution.
Let  
\begin{equation} \label{grosnglMak}
a := \lfloor p(S) / |S| \rfloor,  \qquad
k := p(S) - a |S|,
\end{equation}
where $0 \leq k \leq |S|-1$.
Then a vector $x$ is a solution to 
the problem \eqref{grosnglM}
if and only if 
$x(s) \in \{ a, a+1 \}$ for all $s \in S$
and precisely $k$ of $x(s)$ are equal to $a+1$;
that is, 
\begin{equation} \label{grosnglMsol}
x = a \chi_{S} + \chi_{U}
\end{equation}
for any $U \subset S$ with $|U| = k$.
For example,
$x = (\overbrace{a+1, \ldots, a+1}^{k}, a, \ldots, a)$.

With above preparations we can describe the algorithm
to minimize the square-sum $W(x)$ on an M-convex set.


\medskip

\noindent
{\bf Groenevelt-type decomposition algorithm for square-sum on an M-convex set $\odotZ{B}$}%
\\
1:  
   Let $a := \lfloor p(S) / |S| \rfloor$, \  $k := p(S) - a |S|$,  
 and  $x := a \chi_{S} + \chi_{U}$
  for an arbitrary $U$ with $|U| = k$.
\\
2:
    If $x$ belongs to $\odotZ{B}$, then let $z\sp{*}:=x$ and stop. 
\\
3: 
     Find a minimal vector $y$ satisfying $y \geq x$ and $y \in \odotZ{Q}$.
\\
4: 
    Let $S_{+}$ be the largest subset $X \subseteq S$ satisfying $y(X) = p(X)$,
      and let $S_{-} := S \setminus S_{+}$.
\\
5: 
    Apply this algorithm recursively to the 
     restriction $\odotZ{B_{+}}$ to $S_{+}$.
    Let  $z\sp{*}_{+} \in \ZZ\sp{S_{+}}$ be the output.
\\
6: 
   Apply this algorithm recursively to the 
     contraction $\odotZ{B_{-}}$ to $S_{-}$.
    Let  $z\sp{*}_{-} \in \ZZ\sp{S_{-}}$ be the output.
\\
7:  
   Define $z\sp{*}(s) := z\sp{*}_{+}(s)$ for $s \in S_{+}$ and
   $z\sp{*}(s) := z\sp{*}_{-}(s)$ for $s \in S_{-}$, and stop.


\medskip

Basic properties of the above algorithm are given below.
The second property (2) implies 
that the recursive calls in Step~5 and Step~6 make sense.

\begin{proposition} \label{PRgroebasic}
\quad
\\
\noindent
{\rm (1)} \ 
When the algorithm terminates in Step~2, the output $z\sp{*}$ is a square-sum minimizer.
\\
\noindent
{\rm (2)} \ 
In Step~4, we have 
$\emptyset \neq S_{+} \neq S$ and  $\emptyset \neq S_{-} \neq S$.
\\
\noindent
{\rm (3)} \ 
$z\sp{*}$ defined in Step~7 is a member of $\odotZ{B}$.
\\
\noindent
{\rm (4)} \ 
$y(s) = x(s)$ for all $s \in S-S_{+}$.
\end{proposition}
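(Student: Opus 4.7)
The plan is to dispatch the four claims in an order that exposes their interdependence. Part~(1) is a relaxation argument: the single-constraint problem \eqref{grosnglM} is obtained from square-sum minimization on $\odotZ{B}$ by dropping every inequality $\widetilde x(X)\ge p(X)$ with $X\subsetneq S$, so its optimum value is a lower bound on $\min\{W(x):x\in\odotZ{B}\}$. When Step~2 succeeds, the vector $x$ constructed in Step~1 attains this bound and lies in $\odotZ{B}$, hence is a square-sum minimizer on $\odotZ{B}$.

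I would treat Part~(4) next, since its conclusion is used to establish both Parts~(2) and~(3). Suppose, toward contradiction, that $y(s)>x(s)$ for some $s\in S-S_+$. By the minimality of $y$ in $\odotZ{Q}$, the vector $y-\chi_s$ lies outside $\odotZ{Q}$, so there exists a set $X\ni s$ with $\widetilde y(X)=p(X)$. Combining supermodularity of $p$ with the tightness of $X$ and of $S_+$ and with $y\in Q$ yields
\begin{align*}
p(X)+p(S_+)=\widetilde y(X)+\widetilde y(S_+) &\ge \widetilde y(X\cap S_+)+\widetilde y(X\cup S_+) \\
 &\ge p(X\cap S_+)+p(X\cup S_+) \\
 &\ge p(X)+p(S_+),
\end{align*}
so equality holds throughout and $X\cup S_+$ is also tight. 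The maximality of $S_+$ then forces $X\subseteq S_+$, contradicting $s\in X-S_+$. The same chain of equalities incidentally shows that the family of tight sets is closed under union, so the ``largest'' $S_+$ in Step~4 is well defined.

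For Part~(2), observe that when the algorithm proceeds past Step~2 we have $x\notin\odotZ{B}$; since $\widetilde x(S)=p(S)$ already holds, this means $x\notin Q$, whence $y\neq x$. Then some $s$ satisfies $y(s)>x(s)$, and the tight-set argument just used for Part~(4) produces a non-empty tight set containing $s$, so $S_+\neq\emptyset$. Conversely, if we had $S_+=S$, then $\widetilde y(S)=p(S)=\widetilde x(S)$ combined with $y\ge x$ would force $y=x$, a contradiction. Hence $\emptyset\neq S_+\neq S$ and consequently $\emptyset\neq S_-\neq S$, so the recursive calls in Steps~5 and~6 are applied to strictly smaller ground-sets.

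For Part~(3) I would verify $\widetilde{z^*}(S)=p(S)$ and $\widetilde{z^*}(X)\ge p(X)$ for every $X\subseteq S$. The defining equations of the base-polyhedra $B_+$ and $B_-$, satisfied inductively by $z^*_+$ and $z^*_-$, give $\widetilde{z^*_+}(S_+)=p(S_+)$ and $\widetilde{z^*_-}(S_-)=p(S)-p(S_+)$, which sum to $p(S)$. For an arbitrary $X\subseteq S$, set $X_+:=X\cap S_+$ and $X_-:=X\cap S_-$; the inductive inequalities $\widetilde{z^*_+}(X_+)\ge p(X_+)$ and $\widetilde{z^*_-}(X_-)\ge p(X_-\cup S_+)-p(S_+)$, combined with the supermodular inequality $p(X)+p(S_+)\le p(X_+)+p(X_-\cup S_+)$ (which follows from $X\cap S_+=X_+$ and $X\cup S_+=X_-\cup S_+$), give $\widetilde{z^*}(X)\ge p(X)$. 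The main obstacle throughout is the Part~(4) argument: once the interaction between minimality of $y$, supermodularity of $p$, and maximality of $S_+$ is set up correctly, Parts~(2) and~(3) follow with routine manipulations.
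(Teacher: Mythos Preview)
Your proof is correct and follows essentially the same approach as the paper. The only noteworthy variation is in Part~(1): the paper argues that the vector $x$ in Step~1 is near-uniform, hence dec-min in $\odotZ{B}$ (Theorem~\ref{THdecmincharZ}), hence a square-sum minimizer (Theorem~\ref{THdecminZsqsum}); your relaxation argument is an equally valid and self-contained alternative that avoids invoking those characterizations. For Parts~(3) and~(4) you supply more detail than the paper---the explicit supermodular inequality for~(3) and the union-closure of tight sets for~(4)---but the underlying ideas are identical.
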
 

\begin{proof}
(1)  
$z\sp{*}$ is near-uniform at the termination in Step~2, and hence it is decreasingly minimal, or equivalently, a square-sum minimizer.

(2) 
We have $y \geq x$, $y \neq x$, $x \notin \odotZ{B}$, and $y \in \odotZ{Q}$ in Step~4.
Since $y$ is not equal to $x$, there exists an element $s \in S$ 
with $y(s) > x(s)$.
Then the minimality of $y$ implies that such $s$ must
belong to $S_{+}$, implying $S_{+} \neq \emptyset$. 
Because of such $s$, we have
$\widetilde y(S) > \widetilde x(S) = p(S)$,
which shows $S_{+} \neq S$.

(3)
This is immediate from the fact that 
$z\sp{*}| S_{+} = z\sp{*}_{+} \in \odotZ{B_{+}}$
and $z\sp{*}| S_{-} =z\sp{*}_{-} \in \odotZ{B_{-}}$,
where $\odotZ{B_{+}}$ is the restriction of $\odotZ{B}$ to $S_{+}$ and
$\odotZ{B_{-}}$ is the contraction of $\odotZ{B}$ to $S_{-}$.

(4) 
By definition, $y$ is a minimal element of $\odotZ{Q}$ satisfying $y \geq x$,
and $S_{+}$ is the largest $y$-tight set with respect to $p$.
Suppose, indirectly, that $y(s) > x(s)$ for some $s \in S-S_{+}$.
Define $\hat y := y - \chi_{s}$. This $\hat y$ belongs to $\odotZ{Q}$, since $y(X) > p(X)$
for any $X$ containing $s$. Also we have $\hat y \geq x$,
a contradiction to the minimality of $y$.
\qedJIAM
\end{proof}

The correctness of the algorithm is established in the following proposition.

\begin{proposition} \label{PRdecalgGoutput}
The output $z\sp{*}$ of the Groenevelt-type decomposition algorithm 
is a minimizer of square-sum $W(x)$ over $\odotZ{B}$,
or equivalently, $z\sp{*}$ is a dec-min element of $\odotZ{B}$.
\end{proposition}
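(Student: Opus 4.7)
I would proceed by induction on $|S|$, using Theorem~\ref{THdecmincharZ}(B) (the 1-tightening characterization of dec-minimality). The base case, when the algorithm terminates in Step~2, is Proposition~\ref{PRgroebasic}(1). For the inductive step, Proposition~\ref{PRgroebasic}(2) ensures $|S_{+}|, |S_{-}| < |S|$, so by the induction hypothesis $z\sp{*}_{+}$ is dec-min in $\odotZ{B_{+}}$ and $z\sp{*}_{-}$ is dec-min in $\odotZ{B_{-}}$; also $z\sp{*} \in \odotZ{B}$ by Proposition~\ref{PRgroebasic}(3). Assume, toward a contradiction, that $z\sp{*}$ admits a 1-tightening step, i.e.\ there are $s, t \in S$ with $z\sp{*}(t) \geq z\sp{*}(s) + 2$ and $z' := z\sp{*} + \chi_{s} - \chi_{t} \in \odotZ{B}$.

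I would split into four cases. When $s, t \in S_{+}$, the defining inequalities of $B_{+}$ for $X \subseteq S_{+}$ are inherited from those of $B$, so $z'|_{S_{+}} = z\sp{*}_{+} + \chi_{s} - \chi_{t} \in \odotZ{B_{+}}$ produces a 1-tightening step for $z\sp{*}_{+}$, contradicting the induction hypothesis; the case $s, t \in S_{-}$ is symmetric, using $p_{-}(X) = p(X \cup S_{+}) - p(S_{+})$. The case $s \in S_{-}$, $t \in S_{+}$ gives $\widetilde{z'}(S_{+}) = \widetilde{z\sp{*}}(S_{+}) - 1 = p(S_{+}) - 1$, which contradicts $z' \in \odotZ{B}$ by the tightness of $S_{+}$ for $z\sp{*}$.

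The remaining case $s \in S_{+}$, $t \in S_{-}$ is the main obstacle, because the exchange respects every obvious constraint of $B$. My plan is to rule it out by establishing the value-ordering
\begin{equation*}
\min_{s' \in S_{+}} z\sp{*}(s') \ \geq \ \max_{t' \in S_{-}} z\sp{*}(t'),
\end{equation*}
which precludes $z\sp{*}(t) \geq z\sp{*}(s) + 2$ whenever $s \in S_{+}$ and $t \in S_{-}$. To prove this ordering, I would combine three ingredients: Proposition~\ref{PRgroebasic}(4), by which $y|_{S_{-}} = x|_{S_{-}}$ takes values only in $\{a, a+1\}$, yielding an upper bound on the components of $z\sp{*}_{-}$; the tightness $\widetilde y(S_{+}) = p(S_{+})$ together with $y \geq x$, which forces the average density $p(S_{+})/|S_{+}|$ upward; and the near-uniformity of $z\sp{*}_{+}$ on the canonical partition classes of $\odotZ{B_{+}}$ given by Theorem~\ref{THchardecminZ} applied inductively, which converts the density bound into a pointwise lower bound on $z\sp{*}_{+}$. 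The careful verification of this value-ordering inequality---propagating the bounds through the recursive calls on $\odotZ{B_{+}}$ and $\odotZ{B_{-}}$ and invoking the supermodularity of $p$ to match the lower bound on $S_{+}$ against the upper bound on $S_{-}$---is the technical heart of the proof; once it is secured, the four-case analysis yields the required contradiction and completes the induction.
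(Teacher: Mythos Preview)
Your overall architecture---induction on $|S|$, then a four-case split on the location of $s$ and $t$---is exactly right and matches the paper. The three ``easy'' cases are handled just as in the paper. The difference lies in the fourth case $s\in S_{+}$, $t\in S_{-}$.

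The value-ordering you aim for, $\min_{S_{+}} z^{*}\ge \max_{S_{-}} z^{*}$, is stronger than what is needed and stronger than what the paper proves; I do not see that it follows from your listed ingredients, and it may well fail. What suffices (and what the paper shows) is the pair of one-sided bounds
\[
z^{*}(s)\ge a\ \ (s\in S_{+}),\qquad z^{*}(t)\le a+1\ \ (t\in S_{-}),
\]
which already give $z^{*}(t)-z^{*}(s)\le 1<2$. The paper obtains these bounds by a much shorter route than your plan via canonical partition classes and supermodularity: since $z_{+}^{*}$ is dec-min in $\odotZ{B_{+}}$ it is also \emph{inc-max} there (Theorem~\ref{THdecincZ}), and $y|_{S_{+}}\in\odotZ{B_{+}}$ has every component $\ge a$, so the smallest component of $z_{+}^{*}$ is at least $a$. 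Symmetrically, for the upper bound the paper observes that $z_{-}^{*}$ is dec-min not only in $\odotZ{B_{-}}$ but in the larger set $\odotZ{Q_{-}}=\{v:v\ge u\text{ for some }u\in\odotZ{B_{-}}\}$, and that $y|_{S_{-}}=x|_{S_{-}}$ (Proposition~\ref{PRgroebasic}(4)) lies in $\odotZ{Q_{-}}$ with all components $\le a+1$; hence $\max z_{-}^{*}\le a+1$. Note this last step genuinely needs $\odotZ{Q_{-}}$ rather than $\odotZ{B_{-}}$, since $y|_{S_{-}}$ need not be a base of the contraction---a subtlety your sketch glosses over. Once you replace your target inequality by these two bounds and use the inc-max/dec-min equivalence directly, the ``technical heart'' you anticipate dissolves into two lines.
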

\begin{proof}
By induction on the size of the ground-set $S$,
we prove that the output of the algorithm is a square-sum minimizer.
By Proposition~\ref{PRgroebasic}(2), we have $|S_{+}| < |S|$  and $|S_{-}| < |S|$.
First we show that the output $z\sp{*}$ of the algorithm satisfies
\begin{equation}
 z\sp{*}(s)  \geq a  \quad (s \in S_{+} ) ,
\qquad
 z\sp{*}(s) \leq a+1 \quad  (s \in S_{-} ) .
\label{decalgz*valGr}
\end{equation}

To prove the first inequality
$z\sp{*}(s) \geq a$ $(s \in S_{+})$
in \eqref{decalgz*valGr},
let $z_{+}\sp{*}$ denote the subvector of $z\sp{*}$ on $S_{+}$
(i.e., $z_{+}\sp{*} = z\sp{*}|S_{+}$),
which is the outcome of the recursive call to 
the restriction $\odotZ{B_{+}}$ to $S_{+}$.
By the induction hypothesis,
$z_{+}\sp{*}$ minimizes 
the square-sum
$\sum [ x(s)\sp{2}: s \in S_{+} ]$
over $\odotZ{B_{+}}$.
This implies, 
by Theorems \ref{THdecincZ} and \ref{THdecminZsqsum}, 
that $z_{+}\sp{*}$ is an inc-max element of $\odotZ{B_{+}}$.
On the other hand, $y|S_{+}$ is a member of $\odotZ{B_{+}}$
(since $S_{+}$ is $y$-tight with respect to $p$) 
satisfying 
$(y|S_{+})(s) \geq x(s) \geq a$ for all $s \in S_{+}$.
It then follows that each component of $z_{+}\sp{*}$  
is bounded from below by $a$.
Therefore, $z\sp{*}(s) \geq a $ for $s \in S_{+}$.

The second inequality $z\sp{*}(s) \leq a+1$ $(s \in S_{-})$
in \eqref{decalgz*valGr}
can be proved as follows.
By the induction hypothesis,
the vector $z_{-}\sp{*} := z\sp{*}|S_{-}$
minimizes the square-sum
over the contraction $\odotZ{B_{-}}$ to $S_{-}$,
and hence $z_{-}\sp{*}$ is a dec-min element of $\odotZ{B_{-}}$
by Theorem~\ref{THdecminZsqsum},
which further implies that 
$z_{-}\sp{*}$ is a dec-min element of 
$\odotZ{Q_{-}} = \{ v \in \ZZ\sp{S_{-}} : v \geq u  \ \mbox{ for some $u \in \odotZ{B_{-}}$}  \}$.
On the other hand, $y|S_{-}$ is a member of $\odotZ{Q_{-}}$
satisfying 
$(y|S_{-})(s) \leq a+1$ for all $s \in S_{-}$,
since $y(s) = x(s) \leq a+1$ for all $s \in S_{-}$
by Proposition~\ref{PRgroebasic}(4)
and \eqref{grosnglMsol}. 
Therefore
$z\sp{*}(s) \leq a+1$ for all $s \in S_{-}$.
Thus \eqref{decalgz*valGr} is proved.

Finally we show that $z\sp{*}$ satisfies the condition 
\begin{equation} \label{decminoptexcZGr}
 z\sp{*}(t) \geq z\sp{*}(s) + 2
\ \Longrightarrow \ 
 z\sp{*} + \chi_{s} - \chi_{t} \notin \odotZ{B} 
\end{equation}
for all $(s,t)$ with $s, t \in S_{+} \cup S_{-}$.
Suppose that $z\sp{*}(t) \geq z\sp{*}(s) + 2$.
By \eqref{decalgz*valGr} we may assume
$(s,t) \notin S_{+} \times S_{-}$.
The condition \eqref{decminoptexcZGr} holds 
when $(s,t) \in S_{+} \times S_{+}$,
since the vector $z_{+}\sp{*} = z\sp{*}|S_{+}$ 
is a dec-min element of $\odotZ{B_{+}}$.
Similarly, \eqref{decminoptexcZGr} holds 
when $(s,t) \in S_{-} \times S_{-}$,
since the vector $z_{-}\sp{*} = z\sp{*}|S_{-}$ 
is a dec-min element of $\odotZ{B_{-}}$.
It remains to consider the case of
$(s,t) \in S_{-} \times S_{+}$.
Since $z_{+}\sp{*} \in \odotZ{B_{+}}$,
the set $S_{+}$ is $z\sp{*}$-tight with respect to $p$.
It then follows that 
$z\sp{*} + \chi_{s} - \chi_{t} \notin \odotZ{B}$
if $s \in S - S_{+}$ and $t \in S_{+}$.
Hence, the condition \eqref{decminoptexcZGr} holds for 
$(s,t) \in S_{-}  \times S_{+}$.
By Theorems \ref{THdecmincharZ} and \ref{THdecminZsqsum}, 
this completes the proof of Proposition~\ref{PRdecalgGoutput}.
\qedJIAM
\end{proof}

For clarity we make an explicit statement about strong polynomiality of the algorithm.

\begin{proposition} \label{PRdecalgGpolytime}
The Groenevelt-type decomposition algorithm
computes a square-sum minimizer (i.e., dec-min element) of an M-convex set
in strongly polynomial time.
\end{proposition}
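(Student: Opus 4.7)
The plan is to bound, separately, the size of the recursion tree and the cost of the work performed at each node of that tree, and then combine these into an overall strongly polynomial bound. Throughout, we assume access to an evaluation oracle for the supermodular function $p$ (equivalently for the complementary submodular $b$), so that subroutines based on submodular function minimization (SFM) run in strongly polynomial time in $|S|$.

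First I would bound the recursion tree. By Proposition~\ref{PRgroebasic}(2), whenever a non-leaf call is made on a ground-set of size $n\geq 2$, the two recursive subcalls are performed on the disjoint sets $S_{+}$ and $S_{-}$ with $|S_{+}|+|S_{-}|=n$ and $1\leq |S_{\pm}|\leq n-1$. Consequently the ground-sets appearing at any fixed depth of the recursion form a collection of pairwise disjoint subsets of $S$, so their total size is at most $|S|$, and each leaf is reached via Step~2 on a non-empty set. A standard counting argument (binary tree with at most $|S|$ leaves) then gives at most $2|S|-1$ nodes in the entire recursion tree.

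Next I would show that the work at each node is strongly polynomial. The non-trivial steps are: (i) computing $a$ and $k$ and the initial $x$ in Step~1 is $O(|S|)$; (ii) testing whether $x\in\odotZ{B}$ in Step~2 amounts to checking $\min\{\widetilde x(X)-p(X):X\subseteq S\}\geq 0$ with $\widetilde x(S)=p(S)$, which is one SFM call on the submodular function $X\mapsto \widetilde x(X)-p(X)$; (iii) the minimal vector $y\geq x$ in $\odotZ{Q}$ in Step~3 can be produced by repeatedly decreasing a coordinate by one as long as $\odotZ{Q}$-membership is preserved, each such test being a single SFM call, for a total of polynomially many SFM calls (alternatively, $y$ can be obtained in one shot from $x$ by the standard saturation construction for supermodular polyhedra); (iv) the largest $y$-tight set $S_{+}$ in Step~4 is the (unique) maximal minimizer of the submodular function $X\mapsto y(X)-p(X)$, again computable by SFM. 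Finally, the restriction $B_{+}$ and contraction $B_{-}$ are described by integer-valued supermodular functions obtained from $p$ in $O(1)$ oracle queries each, so the recursive calls receive valid inputs of the same form.

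The hard part, conceptually, will be justifying that the restriction/contraction in Steps~5 and 6 are genuinely simpler subproblems and that the subroutines in Steps 2--4 do not lose strong polynomiality as we descend. For the former, Proposition~\ref{PRgroebasic}(2) already guarantees $|S_{\pm}|<|S|$, so the recursion terminates in at most $|S|$ levels, and each recursive instance is again defined by an integer-valued supermodular function (so SFM is available). For the latter, each of the at most $O(|S|)$ subroutine calls per node runs in strongly polynomial time by a strongly polynomial SFM algorithm (e.g., of Iwata--Fleischer--Fujishige or Schrijver). Combining the tree size bound $O(|S|)$ with the strongly polynomial work per node yields a strongly polynomial overall bound on the running time, establishing the proposition.
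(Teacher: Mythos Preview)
Your proof is correct and follows the same approach as the paper's (very terse) argument, just fleshed out considerably. One caveat: your first method for Step~3, ``repeatedly decreasing a coordinate by one as long as $\odotZ{Q}$-membership is preserved,'' is \emph{not} strongly polynomial, because the number of unit decrements from an initial feasible $y$ down to a minimal one is bounded only by coordinate magnitudes, not by $|S|$. Your parenthetical alternative, the saturation construction, is the right fix: process the elements $s\in S$ one at a time and set $y(s):=\max\bigl(x(s),\ \max\{p(X)-\widetilde y(X\setminus\{s\}):s\in X\}\bigr)$, each such value computable with a single SFM call, for $|S|$ SFM calls in total. With that correction the argument is complete and matches the paper's.
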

\begin{proof}
The number of recursive calls is bounded by $|S|$,
and each step  can be done in strongly polynomial time
(using a submodular function minimization subroutine).
\qedJIAM
\end{proof}

The algorithm is illustrated for a simple example.

\begin{example} \rm \label{EXgroedim2}
Let $S = \{ s_{1} , s_{2} \}$ and $\odotZ{B} = \{ (3,2), (4,1), (5,0) \}$,
which has a unique  dec-min element $m_{\ZZ}=(3,2)$.
The defining supermodular function $p$ is given by
\[
 p(\emptyset)= 0,  
\quad
 p(\{ s_{1} \})= 3,  
\quad
 p(\{ s_{2} \})= 0,  
\quad
 p(\{ s_{1},s_{2} \}) = 5.
\]
In Step~1 of the Groenevelt-type decomposition algorithm, we obtain
$a = \allowbreak  \lfloor p(S) / |S| \rfloor \allowbreak 
 = \lfloor 5 / 2 \rfloor = 2$ and $k = p(S) - a |S| = 1$.
We have two choices for  $x$, namely,
$x\sp{(1)} = (2,3)$ and $x\sp{(2)} = (3,2)$.

The first vector $x\sp{(1)} = (2,3)$ does not belong to 
$\odotZ{B}$, and the vector $y$ in Step~3 is given (uniquely) by $y=(3,3)$,
for which we obtain 
$S_{+} = \{ s_{1} \}$ and $S_{-} = \{ s_{2} \}$
in Step~4.  
In Step~5, the restriction $\odotZ{B_{+}}$ consists of a single number 
(one-dimensional vector) 3,
that is,
$\odotZ{B_{+}} = \{ 3 \}$, 
for which $z\sp{*}_{+} = 3$.
In Step~6, the contraction is given by 
$\odotZ{B_{-}} = \{ 2 \}$,
for which $z\sp{*}_{-} = 2$.
In Step~7, we obtain 
$z\sp{*} = (z\sp{*}_{+}, z\sp{*}_{-}) = (3,2)$,
which is the dec-min element of $\odotZ{B}$.

The second vector $x\sp{(2)} = (3,2)$
is already in the given M-convex set $\odotZ{B}$.
Therefore, the algorithm terminates at Step~2
with 
$z\sp{*} = x\sp{(2)} = (3,2)$.
Note that no recursive calls are involved, which means that 
the dec-min element is computed without decomposing the ground-set.
\finbox
\end{example}

The Groenevelt-type decomposition algorithm
may not find the canonical partition.
The resulting decomposition can be coarser or finer than
the canonical partition, which is shown in the following examples.

\begin{example} \rm \label{EXgroePat}
For the problem of Example~\ref{EXgroedim2},
we have
\begin{equation*} 
 p(X) - \beta |X| = 
   \left\{  \begin{array}{ll}
    0   & (X = \emptyset),  \\
    3 - \beta  & (X = \{ s_{1} \}),  \\
     - \beta   & (X = \{ s_{2} \}),  \\
    5 - 2 \beta  & (X = \{ s_{1}, s_{2} \}).  \\
             \end{array}  \right.
\end{equation*}
By Proposition~\ref{PRaltcano}, 
there are two essential values $\beta_{1} = 3$ and $\beta_{2} = 2$, and
the canonical partition is a bipartition
$\{ S_{1}, S_{2} \}$ 
given by
\begin{align*}
S_{1} 
&= L(\beta_{1}-1) - L(\beta_{1})
= L(2) - L(3)
= \{ s_{1} \} - \emptyset
= \{ s_{1} \},
\\ 
S_{2} 
&= L(\beta_{2}-1) - L(\beta_{2})
= L(1) - L(2)
= \{ s_{1}, s_{2} \} - \{ s_{1} \}
= \{ s_{2} \}.
\end{align*}
The first choice $x\sp{(1)} = (2,3)$ in Example~\ref{EXgroedim2}
results in the decomposition with
$S_{+} = \{ s_{1} \}$ and $S_{-} = \{ s_{2} \}$,
which coincides with the canonical partition,
whereas the second choice $x\sp{(2)} = (3,2)$ 
does not decompose $S$ at all, that is,
results in a decomposition coarser than the canonical partition.
\finbox
\end{example}

\begin{example} \rm  \label{EXgroePat2}
Let $S = \{ s_{1} , s_{2} , s_{3} , s_{4} \}$ and 
$\odotZ{B} = \{ (1,0,1,0),  (1,0,0,1), (0,1,0,1), \allowbreak  (0,1,1,0)   \}$,
in which every element is dec-min.
The defining supermodular function $p$ is given by
\begin{align*}
& p(\emptyset)= 0,  
\quad
 p(S)= 2,  
\quad
 p(\{ s_{i} \})= 0,  
\quad
 p(S - \{ s_{i} \}) = 1
\quad (i=1,\ldots,4),
\\ &
 p(\{ s_{1}, s_{2} \}) = p(\{ s_{3}, s_{4} \}) = 1,
\quad   p(X)=0 \ \ \mbox{for other $X$ with $|X|=2$}.
\end{align*}
We can verify, as in Example~\ref{EXgroePat},
that the canonical partition
is a trivial partition consisting of $S$ itself.
The Groenevelt-type decomposition algorithm may result in a finer decomposition.
In Step~1,
we get
$a = \lfloor p(S) / |S| \rfloor = \lfloor 2 / 4 \rfloor = 0$ and $k = p(S) - a |S| = 2$.
Suppose we have chosen 
$x = (0,0,1,1)$, which does not belong to $\odotZ{B}$.
For this $x$, we may take 
$y = (0,1,1,1) \in \odotZ{Q}$
in Step~3, 
for which $S_{+} = \{ s_{1} , s_{2} \}$ and
$S_{-} = \{ s_{3} , s_{4} \}$.
We have 
$\odotZ{B_{+}} = \{ (1,0),  (0,1) \}$
and may take 
$z\sp{*}_{+} = (1,0)$,
whereas we have
$\odotZ{B_{-}} = \{ (1,0),  (0,1) \}$
and may take 
$z\sp{*}_{-} = (1,0)$.
Then we obtain 
$z\sp{*} = (z\sp{*}_{+}, z\sp{*}_{-}) = (1,0,1,0)$.
The resulting decomposition of $S$ is a bipartition 
$\{ \{ s_{1}, s_{2} \}, \{ s_{3}, s_{4} \}  \}$,
which is finer than the canonical partition.
It is noteworthy that
the first member $S_{+} = \{ s_{1} , s_{2} \}$ 
is not even a $z\sp{*}$-top set.
\finbox
\end{example}

\subsection{Fujishige-type decomposition algorithm}
\label{SCdecalgF}

In this section, we develop another decomposition algorithm 
for square-sum minimization on an M-convex set
using the framework of Fujishige \cite[Section~8.2]{Fuj05book}
for separable convex minimization on a base-polyhedron.
The proposed algorithm is based on the natural idea
to apply the framework of \cite[Section~8.2]{Fuj05book} 
to the piecewise-linear extension
$\overline{W}(x)$ in \eqref{pclinW}.
To ensure strong polynomiality, however,
we need to devise a non-trivial gadget to cope with complications arising from integrality.
The relation of the proposed algorithm
to the framework of \cite[Section~8.2]{Fuj05book}
is explained in Remark~\ref{RMdecalgFder} at the end of this section.

By definition,
an element $z$ of an M-convex set $\odotZ{B}$
admits no 1-tightening step if it satisfies the condition:
\begin{equation} \label{decminoptexcZ}
 z(t) \geq z(s) + 2
\ \Longrightarrow \ 
 z + \chi_{s} - \chi_{t} \notin \odotZ{B} .
\end{equation}
This condition, 
Condition (B)
in Theorem~\ref{THdecmincharZ},
is necessary and sufficient for $z \in \odotZ{B}$ to be a dec-min element of $\odotZ{B}$.
It is worth noting that this condition 
coincides with the local optimality condition
\cite[Theorem 6.26]{Mdcasiam} 
for M-convex function minimization applied to 
the function $W(x)$ over $\odotZ{B}$.

We consider a relaxation of the condition \eqref{decminoptexcZ} on $z$,
which, for any given integer $a$, requires that 
\begin{equation}  \label{basedecalglocoptZ2}
 z(s) \leq a,  \  \  z(t) \geq a + 1,  \  \  z(t) \geq z(s) + 2
\ \Longrightarrow \ 
 z + \chi_{s} - \chi_{t} \notin \odotZ{B}.
\end{equation}
Obviously, the condition
\eqref{decminoptexcZ} is stronger than
\eqref{basedecalglocoptZ2} for any fixed $a$.

To consider algorithmic aspects of \eqref{basedecalglocoptZ2}, 
it is convenient to relate \eqref{basedecalglocoptZ2} 
to convex minimization.
Define functions 
\begin{align}
g_{a}(k) &  := \max(a - k, 0,  k - a -1) 
\qquad (k \in \ZZ) ,
  \label{devfnZdef}
\\ 
G_{a}(x) 
& := \sum_{s \in S} g_{a}(x(s)) 
\nonumber \\ &
= \sum_{s \in S} \max(a - x(s), 0, x(s) - a -1)
\qquad (x \in \ZZ\sp{S}).
\label{basedecalgS2fnZ}
\end{align}
Then, \eqref{basedecalglocoptZ2} 
can be recognized as a
local optimality condition
for the minimization of $G_{a}(x)$ over $\odotZ{B}$.

\begin{proposition} \label{PRdecalgFS2minGZ}
$z \in \odotZ{B}$ satisfies \eqref{basedecalglocoptZ2}
if and only if $z$ is a minimizer of $G_{a}$ 
over $\odotZ{B}$.
\end{proposition}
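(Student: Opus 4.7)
The plan is to identify the condition \eqref{basedecalglocoptZ2} as precisely the pairwise (one-exchange) local optimality condition for minimizing the separable convex function $G_{a}$ over the M-convex set $\odotZ{B}$, and then invoke the local-to-global optimality principle for M-convex set minimization of a separable convex function (the paper already appeals to \cite[Theorem 6.26]{Mdcasiam} in a similar spirit).

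First, I would analyze the one-dimensional building block $g_{a}$. Since $g_{a}$ is convex piecewise-linear with kinks at $a$ and $a+1$ and is $0$ on $\{a,a+1\}$, its forward difference $g_{a}(k+1)-g_{a}(k)$ equals $-1$ for $k\le a-1$, equals $0$ for $k=a$, and equals $+1$ for $k\ge a+1$. Using this, I would compute, for arbitrary $s,t\in S$,
\begin{equation*}
G_{a}(z+\chi_{s}-\chi_{t})-G_{a}(z)
=[g_{a}(z(s)+1)-g_{a}(z(s))]+[g_{a}(z(t)-1)-g_{a}(z(t))].
\end{equation*}
A short nine-case table (the three possible ranges for $z(s)$ crossed with the three for $z(t)$) shows that this difference is strictly negative exactly when $z(s)\le a$, $z(t)\ge a+1$, and $z(t)\ge z(s)+2$; in every other case it is non-negative.

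With this key computation in hand, condition \eqref{basedecalglocoptZ2} can be rephrased as: whenever an exchange $z\mapsto z+\chi_{s}-\chi_{t}$ strictly decreases $G_{a}$, it leaves $\odotZ{B}$. Equivalently, for every pair $(s,t)$ with $z+\chi_{s}-\chi_{t}\in \odotZ{B}$ one has $G_{a}(z+\chi_{s}-\chi_{t})\ge G_{a}(z)$. This is exactly the pairwise local optimality condition for $G_{a}$ on the M-convex set $\odotZ{B}$. To finish, I would invoke the local-to-global optimality theorem for separable convex minimization on M-convex sets (\cite[Theorem 6.26]{Mdcasiam}), which asserts that such local optimality is equivalent to global minimality of $G_{a}$ over $\odotZ{B}$.

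The main obstacle, such as it is, lies in the boundary cases of the case analysis (in particular confirming that the pair $z(s)=a$, $z(t)=a+1$ gives a non-negative increment, so that this pair is correctly excluded from \eqref{basedecalglocoptZ2} by the clause $z(t)\ge z(s)+2$); the rest is an immediate reduction to a known discrete convex analysis result. No new machinery beyond the difference computation and the cited local-global principle is needed.
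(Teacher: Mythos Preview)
Your proposal is correct and follows essentially the same approach as the paper: both compute $G_{a}(z+\chi_{s}-\chi_{t})-G_{a}(z)$ as the sum of two one-dimensional $g_{a}$-differences, identify the strict-decrease case with the triple condition in \eqref{basedecalglocoptZ2}, and then invoke the local-to-global optimality criterion \cite[Theorem~6.26]{Mdcasiam} for M-convex minimization of a separable convex function. The only difference is presentational---you spell out the nine-case table and the boundary case $z(s)=a$, $z(t)=a+1$, whereas the paper compresses this into a single displayed equivalence.
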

\begin{proof}
Since $G_{a}$ is a separable convex function,
its restriction to the M-convex set $\odotZ{B}$
is an M-convex function (see \cite[(6.31)]{Mdcasiam}).
By the optimality criterion \cite[Theorem 6.26]{Mdcasiam} for M-convex functions,
$z$ is a minimizer of $G_{a}$ over $\odotZ{B}$
if and only if, for any $s,t \in S$, we have the local optimality that
\[
 z + \chi_{s} - \chi_{t} \in \odotZ{B}
\ \Longrightarrow \ 
 G_{a}(z + \chi_{s} - \chi_{t}) \geq G_{a}(z) ,
\]
which is equivalent to
\[
 G_{a}(z + \chi_{s} - \chi_{t}) < G_{a}(z)
\ \Longrightarrow \ 
 z + \chi_{s} - \chi_{t} \notin \odotZ{B}.
\]
Here we have
\begin{align*}
& G_{a}(z + \chi_{s} - \chi_{t}) < G_{a}(z) 
\\ & \iff
  \left(g_{a}(z(s) + 1) - g_{a}(z(s)) \right) + \left(g_{a}(z(t) - 1) - g_{a}(z(t)) \right)   < 0 
\\ & \iff
[ z(s) \leq a, \ z(t) \geq a + 1,  \  z(t) \geq z(s) + 2 ].
\end{align*}
Hence follows the claim of Proposition~\ref{PRdecalgFS2minGZ}.
\qedJIAM
\end{proof}

The following proposition shows that an integral vector $z \in \odotZ{B}$ 
satisfying \eqref{basedecalglocoptZ2}
can be computed in strongly polynomial time.

\begin{proposition} \label{PRdecalgFS2polytimeZ}
For any given $a \in \ZZ$, 
the function $G_{a}$ can be minimized over $\odotZ{B}$ in strongly polynomial time.
Equivalently,
a member of $\odotZ{B}$ satisfying \eqref{basedecalglocoptZ2},
for any given $a \in \ZZ$,
can be found in strongly polynomial time.
\end{proposition}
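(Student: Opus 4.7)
The plan is to view this as separable convex minimization of $G_{a}$ over the M-convex set $\odotZ{B}$ and to apply Groenevelt's decomposition algorithm \cite{Gro91} directly. Groenevelt's algorithm was originally formulated for general separable convex objectives; its square-sum specialization is the one presented in Section~\ref{SCdecalgG}. For the algorithm to run in strongly polynomial time, one needs two ingredients: (i) an efficient solver for the ``single constraint problem'' associated with the objective, and (ii) the SFM subroutines underlying Step~3 and Step~4 of the algorithm.

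Ingredient (ii) is already available from Section~\ref{SCdecalgG}. For ingredient (i), the structure of $g_{a}$ (piecewise linear with only three pieces and slopes in $\{-1,0,+1\}$) makes the single-constraint problem particularly easy: minimize $G_{a}(x)$ over $x\in\ZZ\sp{S}$ with $\widetilde x(S)=p(S)$. I would solve this in $O(|S|)$ time, in close analogy with \eqref{grosnglMsol}, by splitting into three cases: if $a|S|\le p(S)\le (a+1)|S|$, place $p(S)-a|S|$ coordinates at $a+1$ and the rest at $a$, achieving the optimal value $0$; if $p(S)<a|S|$, take any integer vector with all coordinates $\le a$ and with the correct sum, achieving $a|S|-p(S)$; symmetrically if $p(S)>(a+1)|S|$. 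The main algorithm then runs exactly as in Section~\ref{SCdecalgG}: start from the single-constraint minimizer $x$, return $x$ if it happens to lie in $\odotZ{B}$, otherwise compute a minimal $y\ge x$ in $\odotZ{Q}$, let $S_{+}$ be the maximal $y$-tight set and $S_{-}:=S-S_{+}$, and recurse on the restriction $\odotZ{B_{+}}$ and the contraction $\odotZ{B_{-}}$, piecing together the outputs.

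The hard part will be extending the correctness argument of Proposition~\ref{PRdecalgGoutput}---in particular the structural inequality \eqref{decalgz*valGr}---from the square-sum to $G_{a}$. The appropriate analogue asserts that the recursive output $z\sp{*}$ satisfies $z\sp{*}(s)\ge a$ on $S_{+}$ and $z\sp{*}(s)\le a+1$ on $S_{-}$, and I would prove it by exploiting the fact that $g_{a}$ is strictly decreasing on $\ZZ\cap(-\infty,a]$ and strictly increasing on $\ZZ\cap[a+1,+\infty)$, which forces any $G_{a}$-minimizer of a recursive sub-problem to stay on the correct side of the relevant band. Combining this with Proposition~\ref{PRgroebasic}(4), which guarantees that $y=x$ on $S-S_{+}$, and with Proposition~\ref{PRdecalgFS2minGZ}, which identifies \eqref{basedecalglocoptZ2} as the local optimality condition for $G_{a}$, the induction on $|S|$ carries through in the same pattern as the square-sum proof. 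Strong polynomiality is then immediate from the bound of $|S|$ on the recursion depth together with the polynomial cost of each SFM call, exactly as in Proposition~\ref{PRdecalgGpolytime}.
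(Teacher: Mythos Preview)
Your approach is genuinely different from the paper's. The paper reduces the problem to a \emph{linear} optimization over an enlarged M-convex set: writing $x = x^{1} + x^{2} + x^{3}$ with $x^{1} \le a\bm{1}$, $\bm{0} \le x^{2} \le \bm{1}$, $\bm{0} \le x^{3}$, the set of such triples $(x^{1},x^{2},x^{3})$ with $x^{1}+x^{2}+x^{3}\in\odotZ{B}$ forms an M-convex set on three copies of $S$, and minimizing $G_{a}(x)$ amounts to minimizing the linear function $\sum_{s}(a - x^{1}(s)) + \sum_{s} x^{3}(s)$ there. This is a one-shot reduction, solved by the greedy algorithm together with submodular function minimization. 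Your proposal instead runs a full recursive Groenevelt-style decomposition with the objective $G_{a}$, which is heavier machinery (and, incidentally, would nest one recursive decomposition inside another when used as the Step~2 subroutine of the Fujishige-type algorithm).

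More importantly, your correctness outline has a concrete gap. The analogue of \eqref{decalgz*valGr} you state---$z^{*}(s)\ge a$ on $S_{+}$ and $z^{*}(s)\le a+1$ on $S_{-}$---does \emph{not} hold for $G_{a}$. Take $S=\{s_{1},s_{2}\}$, $a=5$, and the supermodular $p$ with $p(\emptyset)=0$, $p(\{s_{1}\})=3$, $p(\{s_{2}\})=-10$, $p(S)=0$; then $\odotZ{B}=\{(k,-k):3\le k\le 10\}$. Since $p(S)=0<a|S|=10$, any single-constraint minimizer has both coordinates $\le a$; choosing $x=(0,0)$ gives $y=(3,0)$, hence $S_{+}=\{s_{1}\}$, and the restriction $\odotZ{B_{+}}=\{3\}$ forces $z^{*}_{+}(s_{1})=3<5=a$. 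Thus the monotonicity of $g_{a}$ on each side of the band cannot ``force any $G_{a}$-minimizer of a recursive sub-problem to stay on the correct side'': the sub-polyhedron $\odotZ{B_{+}}$ may simply contain no element with all components $\ge a$. The proof of \eqref{decalgz*valGr} in Proposition~\ref{PRdecalgGoutput} genuinely relied on the inc-max property of the square-sum minimizer together with the fact that the single-constraint solution for $W$ has every coordinate $\ge a$; neither holds for $G_{a}$ when $p(S)<a|S|$ (and the symmetric failure occurs when $p(S)>(a+1)|S|$). Without these bounds, the case $(s,t)\in S_{+}\times S_{-}$ in the final verification of \eqref{basedecalglocoptZ2} is not handled by your argument. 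Even if Groenevelt's general theorem ultimately validates the recursion, the proof you sketched does not, and you would need either to invoke that theorem as a black box (including its adaptation from polymatroids to possibly unbounded base-polyhedra) or to supply a different inductive argument.
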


\begin{proof}
We express a vector $x \in \ZZ\sp{S}$
as
$x = x\sp{1} + x\sp{2} + x\sp{3}$,
where $x\sp{1}$, $x\sp{2}$, and $x\sp{3}$ are integer vectors and
\[
 x\sp{1}(s) \leq a, 
\quad
 0 \leq x\sp{2}(s) \leq 1, 
\quad
 0 \leq x\sp{3}(s) 
\qquad (s \in S).
\]
Consider three disjoint copies
$S\sp{1}$, $S\sp{2}$, $S\sp{3}$ of the ground-set $S$,
and regard
$x\sp{i}$
as a vector on $S\sp{i}$ 
for $i=1,2,3$.
The set defined by
\begin{align*}
\odotZ{B'} = &
 \{ (x\sp{1}, x\sp{2},x\sp{3}) \in \ZZ\sp{S\sp{1} \cup S\sp{2} \cup S\sp{3} } 
 \mid 
\\ & \quad
 x\sp{1} + x\sp{2} + x\sp{3} \in \odotZ{B}, \ 
 x\sp{1} \leq a {\bm 1}, \  {\bm 0} \leq x\sp{2} \leq {\bm 1}, \ {\bm 0} \leq x\sp{3}  \}
\end{align*}
is an M-convex set 
(which can be proved directly or by using \cite[Proposition~3.3]{Mopernet21}).
Furthermore, the minimization of
$G_{a}(x)$ over $\odotZ{B}$
is reduced to the minimization of 
$\sum_{s \in S_{1}}(a - x\sp{1}(s) ) + \sum_{s \in S_{3}}x\sp{3}(s)$
over $\odotZ{B'}$.
The latter problem is a linear optimization over a base-polyhedron,
and the required subroutines for $\odotZ{B'}$
can be realized from those of the given M-convex set $\odotZ{B}$
with the aid of a submodular function minimization algorithm. 
 For technical details, the reader is referred to
  \cite{Fra11book}
(Section~14.3 (in particular, Theorem~14.3.39) and Section~14.5).
\qedJIAM
\end{proof}

\medskip

We are now ready to present the Fujishige-type decomposition algorithm 
for computing a dec-min element of $\odotZ{B}$.
We assume that $B$ is given
by an integer-valued supermodular function $p$
as $B=B'(p)$ in \eqref{basepolysupermod}.

\medskip

\noindent
{\bf Fujishige-type decomposition algorithm for square-sum on an M-convex set $\odotZ{B}$}%
\\
1: Set  $a := \lfloor p(S) / |S| \rfloor$.
\\
2: Find $z \in \odotZ{B}$ that satisfies \eqref{basedecalglocoptZ2}
  (by the method in the proof of Proposition~\ref{PRdecalgFS2polytimeZ}).
\\
3: Define $S_{+}$, $S_{-}$, $S_{0}$ by
\begin{align}
 S_{+} &:= \bigcup_{t \in S: z(t) \geq a+2} 
 \{ s \in S \mid 
 z + \chi_{s} - \chi_{t} \in \odotZ{B}   \},
 \label{basedecalgS+Z}
\\
 S_{-} &:= \bigcup_{s \in S: z(s) \leq a-1} 
 \{ t \in S \mid 
 z + \chi_{s} - \chi_{t} \in \odotZ{B}  \},
 \label{basedecalgS-Z}
\\
 S_{0} &:= S - ( S_{+} \cup S_{-} ),
 \label{basedecalgS0Z}
\end{align}
\phantom{3:}
and let $z\sp{*}(s) := z(s)$ for each $s \in S_{0}$.  
\\
4: 
   If $S_{+} \not= \emptyset$, then apply the present algorithm recursively to
  the restriction to $S_{+}$.  
\\ \phantom{4:}
 If $S_{-} \not= \emptyset$, then apply the present algorithm recursively to
  the contraction to $S_{-}$.  
\\
5: 
If both  $S_{+}$ and $S_{-}$ are empty, then stop.
\finbox 

\medskip

When the algorithm terminates in Step~5, the component values
$z\sp{*}(s)$ have already been defined for all $s \in S$. 
The computed vector $z\sp{*}$ 
is a dec-min element of $\odotZ{B}$,
which we show in Proposition~\ref{PRdecalgFoutput}.
Step~3 of the algorithm
defines the component values $z\sp{*}(s)$ for $s \in S_{0}$
(the subset $S_{0}$ may be an empty),
whereas the component values on $S_{+}$ and $S_{-}$ are determined
within the recursive calls in Step~4.
The recursive call to the restriction to $S_{+}$
means applying the above algorithm
to $B_{+} = B'(p_{+})$ on $S_{+}$,
where 
$p_{+}$ is the supermodular function on $S_{+}$ defined by
$p_{+}(X) = p(X)$ for $X  \subseteq S_{+}$.
The recursive call to the contraction to $S_{-}$
means applying the above algorithm 
to $B_{-} = B'(p_{-})$ on $S_{-}$,
where 
$p_{-}$ is the supermodular function  on $S_{-}$ defined by
$p_{-}(X) = p(X \cup (S - S_{-})) - p(S - S_{-})$ for $X  \subseteq S_{-}$.

The following proposition implies 
the termination of the algorithm with at most $|S|$ recursive calls.

\begin{proposition} \label{PRdecalgFS-S+Z}
$S_{+} \not= S$ and $S_{-} \not= S$.
\end{proposition}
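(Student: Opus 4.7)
The plan is to derive both claims from a direct counting argument on $\widetilde z(S) = p(S)$, combined with a simple consequence of the local-optimality condition \eqref{basedecalglocoptZ2}. The first step is to record the inequalities pinned down by the definition of $a$: since $a = \lfloor p(S)/|S| \rfloor$, we have
\[
a |S| \leq p(S) < (a+1) |S|,
\]
and since $z \in \odotZ{B}$ we have $\widetilde z(S) = p(S)$.

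Next I would establish the following key observation, which I expect to be the only non-trivial ingredient: the vector $z$ computed in Step~2 satisfies
\[
s \in S_{+} \Longrightarrow z(s) \geq a+1,
\qquad
t \in S_{-} \Longrightarrow z(t) \leq a.
\]
For the first implication, if $s \in S_{+}$ then by \eqref{basedecalgS+Z} there exists $t \in S$ with $z(t) \geq a+2$ and $z+\chi_{s}-\chi_{t} \in \odotZ{B}$. Assuming for contradiction that $z(s) \leq a$, we would have $z(s) \leq a$, $z(t) \geq a+1$, and $z(t) \geq a+2 \geq z(s)+2$, violating \eqref{basedecalglocoptZ2}. The second implication is symmetric, using the definition \eqref{basedecalgS-Z}.

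Finally, I would close each case by a counting contradiction. If $S_{+} = S$ then by the first implication $z(s) \geq a+1$ for every $s \in S$, whence $\widetilde z(S) \geq (a+1)|S| > p(S)$, contradicting $\widetilde z(S) = p(S)$. If $S_{-} = S$, then by the second implication $z(t) \leq a$ for every $t \in S$; moreover the non-emptiness of $S_{-}$ forces the index set of the union \eqref{basedecalgS-Z} to be non-empty, so there exists $s^{*} \in S$ with $z(s^{*}) \leq a-1$. Then
\[
\widetilde z(S) \leq (a-1) + (|S|-1)\, a = a|S|-1 < a|S| \leq p(S),
\]
again contradicting $\widetilde z(S) = p(S)$. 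The only step that requires care is the key observation, and the main subtlety there is to match the hypotheses of \eqref{basedecalglocoptZ2} exactly (in particular to use $z(t) \geq a+2$, not merely $z(t) \geq a+1$, in the definition \eqref{basedecalgS+Z}, which is precisely what guarantees the strict gap $z(t) \geq z(s)+2$).
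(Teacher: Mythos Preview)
Your proposal is correct and follows essentially the same approach as the paper's proof: establish from \eqref{basedecalglocoptZ2} that $z(s)\geq a+1$ on $S_{+}$ and $z(t)\leq a$ on $S_{-}$, then derive a counting contradiction with $\widetilde z(S)=p(S)$ and $a=\lfloor p(S)/|S|\rfloor$. The only cosmetic difference is that, for $S_{+}=S$, you obtain the strict inequality directly from $p(S)<(a+1)|S|$, whereas the paper invokes the existence of some $t_{0}$ with $z(t_{0})\geq a+2$ (which is forced whenever $S_{+}\neq\emptyset$); your handling of $S_{-}$ matches the paper's exactly.
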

\begin{proof}
(This is an adaptation of the argument of 
\cite[Section~8.2]{Fuj05book} to the discrete case.) \
To show $S_{+} \not= S$ by contradiction, assume $S_{+} = S$.
By (\ref{basedecalgS+Z}), this implies that 
for every $s \in  S$, there exists $t \in S$
such that  $z(t) \geq a+2$ and
$z + \chi_{s} - \chi_{t} \in \odotZ{B}$. 
Combining  this with (\ref{basedecalglocoptZ2}) 
we have
$z(s) \geq a+1$
for all $s \in S$,
which implies
$p(S)=\sum [ z(s):  s \in S ] \geq (a+1) |S|$. 
We cannot have equality here,
because, by (\ref{basedecalgS+Z}),
our assumption $S_{+} = S$  implies
$z(t_{0}) \geq a+2$ for some $t_{0} \in S$.
Therefore,
$p(S) > (a+1) |S|$, 
which is a contradiction to
$a = \lfloor p(S) / |S| \rfloor$.

To show $S_{-} \not= S$ by contradiction, assume $S_{-} = S$.
By (\ref{basedecalgS-Z}), this implies that 
for every $t \in  S$, there exists $s \in S$
such that  $z(s) \leq a-1$ and
$z + \chi_{s} - \chi_{t} \in \odotZ{B}$. 
Combining  this with (\ref{basedecalglocoptZ2}) 
we have $z(t) \leq a$ for all $t \in S$,
which implies
$p(S)=\sum [ z(t) : t \in S ] \leq a |S|$. 
We cannot have equality here,
because, by (\ref{basedecalgS-Z}),
our assumption $S_{-} = S$ implies $z(s_{0}) \leq a-1$
for some $s_{0} \in S$.
Therefore, $p(S) < a |S|$, which is a contradiction to 
$a = \lfloor p(S) / |S| \rfloor$.
\qedJIAM
\end{proof}

The correctness of the algorithm is established in the following proposition.

\begin{proposition} \label{PRdecalgFoutput}
The output $z\sp{*}$ of the Fujishige-type decomposition algorithm
is a minimizer of square-sum $W(x)$ over $\odotZ{B}$,
or equivalently, $z\sp{*}$ is a dec-min element of $\odotZ{B}$.
\end{proposition}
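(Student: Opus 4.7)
The plan is to prove Proposition \ref{PRdecalgFoutput} by induction on $|S|$, using characterization (B) of Theorem~\ref{THdecmincharZ} as the working criterion: it suffices to show that $z^* \in \odotZ{B}$ and that no $1$-tightening step exists at $z^*$. In the base case the algorithm terminates in Step~5 with $S_+ = S_- = \emptyset$, whence $S = S_0$ and $z^* = z$. For every $s \in S_0$ one must have $z(s) \in \{a, a+1\}$, because $z(s) \geq a+2$ would force $s \in S_+$ via the trivial choice $t = s$ in \eqref{basedecalgS+Z}, while $z(s) \leq a-1$ would analogously force $s \in S_-$ via \eqref{basedecalgS-Z}. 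Hence no pair satisfies $z(t) \geq z(s) + 2$ and condition~(B) holds vacuously.

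For the inductive step the central technical lemma is that both $S_+$ and $S - S_-$ are $z$-tight. To prove $\widetilde z(S_+) = p(S_+)$, I would argue by contradiction: if $\widetilde z(S_+) > p(S_+)$, a standard exchange property of M-convex sets furnishes $s \in S_+$ and $t \in S - S_+$ with $z - \chi_s + \chi_t \in \odotZ{B}$. The membership $s \in S_+$ then provides, via \eqref{basedecalgS+Z}, some $u \in S$ with $z(u) \geq a+2$ and $z + \chi_s - \chi_u \in \odotZ{B}$. Applying the simultaneous exchange axiom of M-convex sets to the pair $x := z + \chi_t - \chi_s$ and $y := z + \chi_s - \chi_u$ with the distinguished coordinate $u$ (for which $x(u) > y(u)$) forces the partner coordinate to be $s$, the unique element of $\mathrm{supp}^-(x-y)$, and yields $z + \chi_t - \chi_u \in \odotZ{B}$; this contradicts $t \notin S_+$. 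The tightness $\widetilde z(S - S_-) = p(S - S_-)$ follows by the symmetric argument using \eqref{basedecalgS-Z}.

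Granted these tightnesses, the classical decomposition of an M-convex set along a chain of tight sets establishes $z^* \in \odotZ{B}$ from the recursive outputs $z^*|_{S_+} \in \odotZ{B_+}$, $z^*|_{S_-} \in \odotZ{B_-}$, and the unchanged $z^*|_{S_0} = z|_{S_0}$. To verify condition~(B), I would carry out a case analysis on where $(s,t)$ lie among the nine ordered pairs from $\{S_+, S_0, S_-\}$. Three component bounds drive the analysis: $z^*(s) \in \{a, a+1\}$ for $s \in S_0$ as in the base case; $z^*(s) \geq a+1$ for $s \in S_+$, because $z|_{S_+}$ has this property (forced by \eqref{basedecalglocoptZ2}) and the recursion preserves it via the inc-max $=$ dec-min equivalence of Theorem~\ref{THdecincZ}; and dually $z^*(s) \leq a$ for $s \in S_-$. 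Under these bounds the pairs $(S_+,S_0)$, $(S_+,S_-)$, $(S_0,S_0)$, $(S_0,S_-)$ are vacuous since $z^*(t) - z^*(s) < 2$; the pairs $(S_+,S_+)$ and $(S_-,S_-)$ reduce to the inductive hypothesis; and each of $(S_0,S_+)$, $(S_-,S_+)$, $(S_-,S_0)$ is blocked because the proposed swap $z^* + \chi_s - \chi_t$ would strictly decrease $\widetilde{z^*}(S_+)$ or $\widetilde{z^*}(S-S_-)$ below the tight value $p(S_+)$ or $p(S-S_-)$. The principal obstacle is the tightness lemma; once it is in place the remaining verification is essentially bookkeeping built around the carefully engineered definitions of $S_+$ and $S_-$.
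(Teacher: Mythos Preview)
Your proof is correct and follows essentially the same inductive architecture as the paper: derive the component bounds on $z^*$ over $S_+$, $S_0$, $S_-$ via the inc-max property inherited from the recursion, then verify condition~(B) of Theorem~\ref{THdecmincharZ} by the nine-case analysis, using the tightness of $S_+$ and $S-S_-$ to block the cross-block exchanges.

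One remark on what you flag as the principal obstacle. Your simultaneous-exchange argument for $\widetilde z(S_+)=p(S_+)$ is valid (one should note separately that the cases $u=s$ and $u=t$ already give $t\in S_+$ directly), but there is a one-line alternative: for each fixed $t$, the set $\{s\in S : z+\chi_s-\chi_t\in\odotZ{B}\}$ is precisely the smallest $z$-tight set (with respect to $p$) containing $t$, so $S_+$ in \eqref{basedecalgS+Z} is a union of $z$-tight sets and hence $z$-tight; dually $S_-$ is a union of $z$-tight sets with respect to the complementary submodular function, whence $S-S_-$ is $z$-tight with respect to $p$. The paper in fact uses this fact without comment when it asserts that the subvector $z|_{S_+}$ is a member of $\odotZ{B_+}$, and derives the $z^*$-tightness you need at the end trivially from $z^*_+\in\odotZ{B_+}$ and $z^*_-\in\odotZ{B_-}$. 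So your version is, if anything, more explicit than the paper's on this point.
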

\begin{proof}
(This is an adaptation of the argument in \cite[Section~8.2]{Fuj05book} to the discrete case.)
We prove that the output of the algorithm is a square-sum minimizer
by induction on the size $|S|$ of the ground-set $S$.
We have $|S_{+}| < |S|$  and $|S_{-}| < |S|$ by Proposition~\ref{PRdecalgFS-S+Z}.
By the definition in Step~3 and the property  \eqref{basedecalglocoptZ2} of $z$, 
we have
\begin{align}
&
\{ s : z(s) \geq a +2 \}
\ \subseteq S_{+} \subseteq \
\{ s : z(s) \geq a +1 \},
\label{decalgS+val}
\\ &
\{ s : z(s) \leq a-1 \} 
\ \subseteq S_{-} \subseteq \
\{ s : z(s) \leq a \},
\label{decalgS-val}
\end{align}
which imply that
$S_{+}$ and $S_{-}$ are disjoint and 
\begin{equation}
S_{0} \subseteq \{ s : a \leq z(s) \leq a+1  \} .
\label{decalgS0val}
\end{equation}

First we show that the output $z\sp{*}$ of the algorithm satisfies
\begin{equation}
 z\sp{*}(s)  \ \begin{cases}
 \geq a+1 & (s \in S_{+} ) ,
\\
 \leq a & (s \in S_{-} ) ,
\\
\in \{ a, a+1 \} & (s \in S_{0} ). 
\end{cases}
\label{decalgz*val}
\end{equation}
To prove 
$z\sp{*}(s) \geq a + 1$ for $s \in S_{+}$,
let $z_{+}\sp{*}$ denote the subvector of $z\sp{*}$ on $S_{+}$,
which is the outcome of the recursive call to 
the restriction to $S_{+}$.
By the induction hypothesis,
$z_{+}\sp{*}$ minimizes 
the square-sum
$\sum [ x(s)\sp{2}: s \in S_{+} ]$
over the M-convex set $\odotZ{B_{+}}$,
where $B_{+} = B'(p_{+})$ is the restriction to $S_{+}$.
Then 
Theorems \ref{THdecmincharZ} and \ref{THdecminZsqsum}
show that 
$z_{+}\sp{*}$ is an inc-max element of $\odotZ{B_{+}}$,
whereas the subvector of $z$ on $S_{+}$ is a member of $\odotZ{B_{+}}$
satisfying 
$z(s) \geq a + 1$ for all $s \in S_{+}$.
Therefore, 
$z\sp{*}(s) \geq a + 1$ for $s \in S_{+}$.
The second case, $z\sp{*}(s) \leq a$ for $s \in S_{-}$,
can be proved symmetrically. Namely,
by the induction hypothesis,
the subvector $z_{-}\sp{*}$ of $z\sp{*}$ on $S_{-}$
minimizes the square-sum
over the M-convex set $\odotZ{B_{-}}$
defined by the contraction $B_{-} = B'(p_{-})$ to $S_{-}$,
and hence $z_{-}\sp{*}$ is a dec-min element of $\odotZ{B_{-}}$
by Theorem~\ref{THdecminZsqsum}.
This implies
$z\sp{*}(s) \leq a$ for $s \in S_{-}$,
since the subvector of $z$ on $S_{-}$ is a member of $\odotZ{B_{-}}$
satisfying
$z(s) \leq a$ for all $s \in S_{-}$.
The third case,
$z\sp{*}(s) \in \{ a, a+1 \}$ for $s \in S_{0}$,
is obvious from \eqref{decalgS0val} and the definition
$z\sp{*}(s) = z(s)$ for $s \in S_{0}$ in Step~3.

We will show that $z\sp{*}$ satisfies the condition \eqref{decminoptexcZ}
for all $(s,t)$ with $s, t \in S_{+} \cup S_{0} \cup S_{-}$.
Suppose that $z\sp{*}(t) \geq z\sp{*}(s) + 2$.
By \eqref{decalgz*val} we may assume
$(s,t) \notin (S_{+} \cup S_{0}) \times (S_{-} \cup S_{0})$.

The condition \eqref{decminoptexcZ} holds 
when $(s,t) \in S_{+} \times S_{+}$,
since the subvector $z_{+}\sp{*}$ 
is an inc-max (and hence dec-min) element of $\odotZ{B_{+}}$,
as already mentioned.
Similarly, \eqref{decminoptexcZ} holds 
when $(s,t) \in S_{-} \times S_{-}$,
since the subvector $z_{-}\sp{*}$ 
is a dec-min element of $\odotZ{B_{-}}$.

It remains to consider the other three cases:
$(s,t) \in S_{-} \times S_{+}$,
$(s,t) \in S_{0} \times S_{+}$, and
$(s,t) \in S_{-} \times S_{0}$.
Since $z_{+}\sp{*}$ is a base of the restriction $B_{+}$,
we have
$\widetilde {z\sp{*}}(S_{+}) = p(S_{+})$, that is,
$S_{+}$ is $z\sp{*}$-tight with respect to $p$.
It then follows that 
$z\sp{*} + \chi_{s} - \chi_{t} \notin \odotZ{B}$
if $s \in S - S_{+}$ and $t \in S_{+}$.
Hence, the condition \eqref{decminoptexcZ} holds for 
$(s,t) \in (S-S_{+}) \times S_{+} = (S_{-} \cup S_{0}) \times S_{+}$.
Similarly, since
$z_{-}\sp{*}$ is a base of the contraction $B_{-}$,
we have
$\widetilde {z\sp{*}}(S_{-}) = p_{-}(S_{-}) = p(S) - p(S - S_{-}) 
= \widetilde {z\sp{*}}(S) - p(S - S_{-})$, 
which shows that
$S - S_{-}$ is $z\sp{*}$-tight with respect to $p$.
Hence
$z\sp{*} + \chi_{s} - \chi_{t} \notin \odotZ{B}$
if $s \in S_{-}$ and $t \in  S- S_{-}$. 
Hence, the condition \eqref{decminoptexcZ} holds for 
$(s,t) \in S_{-} \times (S-S_{-}) = S_{-} \times  (S_{+} \cup S_{0})$.
This completes the proof of Proposition~\ref{PRdecalgFoutput}.
\qedJIAM
\end{proof}

The Fujishige-type decomposition algorithm can be executed in strongly polynomial time.
The key fact here is that we can carry out Step~2, 
which is characteristic of the discrete case,
in polynomial time.

\begin{proposition} \label{PRdecalgFpolytime}
The Fujishige-type decomposition algorithm
computes a 
\linebreak
square-sum minimizer (i.e., dec-min element) of an M-convex set
in strongly polynomial time.
\end{proposition}
\begin{proof}
By Proposition~\ref{PRdecalgFS-S+Z},
the number of recursive calls is bounded by $|S|$.
In each call of the algorithm, the vector $z$ in 
Step~2 can be found in strongly polynomial time by 
Proposition~\ref{PRdecalgFS2polytimeZ}.
The subsets $S_{+}$ and $S_{-}$ 
in Step~3 can be determined in strongly polynomial time
by a standard method using submodular function minimization
\cite{Fuj05book,Sch03}.
\qedJIAM
\end{proof}

\begin{remark} \rm  \label{RMdecalgFsplit}
The subsets $S_{+}$ and $S_{-}$ constructed 
in the Fujishige-type decomposition algorithm
have crucial properties that each of  
$S_{+}$ and $S - S_{-}$ is a $z\sp{*}$-top and 
$z\sp{*}$-tight set (with respect to $p$),
where the former property is obvious from \eqref{decalgz*val} and
the latter property is shown in the proof of Proposition~\ref{PRdecalgFoutput}.
In addition, 
$z\sp{*}$ is near-uniform on their difference $S_{0} = (S - S_{-}) -  S_{+}$,
where $S_{0}$ may possibly be empty.
It follows from these properties that 
an integral vector $z = (z_{+}, z_{0}, z_{-})$ 
is a dec-min element of $\odotZ{B}$ 
if and only if 
$z_{+}$ is a dec-min element of $\odotZ{B_{+}}$,
$z_{-}$ is a dec-min element of $\odotZ{B_{-}}$,
and
$z_{0}$ is a near-uniform element of $\odotZ{B_{0}}$,
where $B_{0} = B'(p_{0})$ 
with
$p_{0}(X) = p(X \cup S_{+}) - p(S_{+})$ for $X  \subseteq S_{0}$.
This fact justifies the recursive calls in Step~4.
\finbox
\end{remark}

\begin{remark} \rm  \label{RMdecalgFchain}
Condition (C) in Theorem~\ref{THdecmincharZ}
refers to a chain 
$C_{1}\subset C_{2}\subset \cdots \subset C_{\ell}$ 
to characterize a dec-min element.
The subsets $S_{+}$ and $S_{-}$ constructed in the algorithm
correspond to this chain as follows.
First note that
$S_{+} \subseteq S - S_{-}$, where equality may occur.
As mentioned in Remark~\ref{RMdecalgFsplit},
both $S_{+}$ and $S - S_{-}$ are 
$z\sp{*}$-top and $z\sp{*}$-tight sets,
and moreover, $z\sp{*}$ is near-uniform on their difference $S_{0} = (S - S_{-}) -  S_{+}$.
Through the recursive calls to $S_{+}$ and $S_{-}$,
the algorithm constructs, in effect, the chain 
$C_{1}\subset C_{2}\subset \cdots \subset C_{\ell}$.
More precisely,
if $S_{+}$ and $S - S_{-}$ are distinct,
they are consecutive members 
of the chain; otherwise,
$S_{+}$ $(=S - S_{-})$ is a member of the chain.
\finbox
\end{remark}

\begin{remark} \rm  \label{RMdecalgFpat}
The Fujishige-type decomposition algorithm
may not find the canonical chain,
in spite of the fact explained in Remark~\ref{RMdecalgFchain}.
In Example~\ref{EXgroedim2}, for instance,
we have
$S = \{ s_{1}, s_{2} \}$ and
$a = \lfloor p(S) / |S| \rfloor = \lfloor 5 / 2 \rfloor = 2$. 
The element of $\odotZ{B}$ satisfying \eqref{basedecalglocoptZ2}
in Step~2 is given (uniquely) by $z=(3,2)$,
for which
$S_{+} = \emptyset$, $S_{-} = \emptyset$, and $S_{0} = S$
in Step~3.
Thus the ground-set $S$ is not decomposed at all, 
which corresponds to a trivial chain consisting of a single member $S$.
In contrast,
as we have seen in Example~\ref{EXgroePat},
the canonical partition is a bipartition 
$\{ S_{1}, S_{2} \}$ with $S_{1} = \{ s_{1} \}$ and $S_{2} = \{ s_{2} \}$,
which corresponds to the canonical chain:
$\{ s_{1} \} \subset  \{ s_{1}, s_{2} \}$.
\finbox
\end{remark}

\begin{remark} \rm  \label{RMdecalgFder}
We explain here how our Fujishige-type decomposition algorithm is
derived from the framework in \cite[Section~8.2]{Fuj05book}
with additional integrality considerations.
We apply the framework to the piecewise-linear function
$\overline{W}(x) = \sum [ \overline{\varphi} ( x(s) ) : s \in S ]$
defined in \eqref{pclinW},
where $\overline{\varphi}: \RR \to \RR$ 
is the piecewise-linear extension \eqref{pclinphi} of 
$\varphi (k)=k\sp{2}$  $(k \in \ZZ)$.

The notation of \cite[Section~8.2]{Fuj05book} is as follows.
The ground-set is $E$, and a base-polyhedron $B$ is described by a
submodular function $f$.
For $x \in B$, the smallest $x$-tight set (with respect to $f$) containing $e \in E$
is denoted as ${\rm dep}(x,e)$. 
The objective function to be minimized on $B$ is a separable convex function
$\sum_{e \in E} w_{e}(x(e))$,
where each $w_{e}: \RR \to \RR$ is a real-valued convex function on $\RR$.
The left and right derivatives of $w_{e}$ at $\xi \in \RR$ are denoted, respectively, by
${w_{e}}\sp{-}(\xi)$ and ${w_{e}}\sp{+}(\xi)$.
For any $\eta \in \RR$, 
$[ {i_{e}}\sp{-}(\eta) , {i_{e}}\sp{+}(\eta) ]$
denotes the set (interval) of  minimizers $\xi$ of function
$w_{e}(\xi) - \eta \xi$.
In our problem to minimize $\overline{W}$, we have
\begin{align}  
& w_{e}(\xi) = \overline{\varphi}(\xi), 
\quad
{i_{e}}\sp{-}(\eta) = \left\lceil \frac{\eta -1 }{2} \right\rceil ,
\quad
 {i_{e}}\sp{+}(\eta) = \left\lfloor \frac{\eta + 1}{2} \right\rfloor 
\label{basedecalgssubdiffZ2E}
\end{align}
for all $e \in E$.

Step 1 in \cite[page 258]{Fuj05book} says: 
\[
\mbox{Choose $\eta \in \RR$ such that } \ 
\sum_{e \in E} {i_{e}}\sp{-}(\eta) \leq f(E) \leq \sum_{e \in E} {i_{e}}\sp{+}(\eta).
\]
It follows from  (\ref{basedecalgssubdiffZ2E})
that the choice of $\eta = 2a +1 $
with $a = \lfloor f(E) / |E| \rfloor$
satisfies this condition;
then ${i_{e}}\sp{-}(\eta) = a$ and ${i_{e}}\sp{+}(\eta) = a + 1$.
This explains our Step~1 
to set $a := \lfloor p(S) / |S| \rfloor$.

Step 2 in \cite[page 258]{Fuj05book} is a tricky step where
the substantial condition \eqref{basedecalglocoptZ2} is hidden.
This step requires to find a base $x \in B$ such that, for each $s, t \in E$,
\begin{enumerate}
\item
 if $w_{s}^{+}(x(s)) < \eta$ and $w_{t}^{-}(x(t)) > \eta$,
then we have $t \not\in {\rm dep}(x,s)$,

\item
 if $w_{s}^{+}(x(s)) < \eta$ and $w_{t}^{-}(x(t)) = \eta$,
and $t \in {\rm dep}(x,s)$, 
then for any $\alpha > 0$ we have 
$w_{t}^{-}(x(t) - \alpha) < \eta$, i.e.,
$x(t) = {i_{t}}\sp{-}(\eta)$,

\item
if $w_{s}^{+}(x(s))= \eta$ and $w_{t}^{-}(x(t)) > \eta$,
and $t \in {\rm dep}(x,s)$, 
then for any $\alpha > 0$ we have 
$w_{s}^{+}(x(s) + \alpha) > \eta$, i.e.,
$x(s) = {i_{s}}\sp{+}(\eta)$.
\end{enumerate}
In our case,
$B$ is an integral base-polyhedron,
$x \in \odotZ{B}$,
and
$w_{s}(\xi) = \overline{\varphi}(\xi)$, for which
$w_{s}^{+}(\xi) = 2 \xi +1$ and $w_{s}^{-}(\xi) = 2 \xi -1$
for integer $\xi$.
Under the integrality requirement, the first condition is replaced by
\begin{equation} \label{decalgFopt1}
 x(s) < a,   \  \  x(t) > a + 1 
\ \Longrightarrow \ 
 x + \chi_{s} - \chi_{t} \not\in \odotZ{B}.
\end{equation}
A literal translation of the second condition results in the condition
\[
 x(s) < a,   \  \  x(t) = a + 1 , \  \    x + \chi_{s} - \chi_{t} \in \odotZ{B} 
\ \Longrightarrow \ 
x(t) = a, 
\]
which should be interpreted as
\begin{equation} \label{decalgFopt2}
 x(s) < a,   \  \  x(t) = a + 1 
\ \Longrightarrow \ 
 x + \chi_{s} - \chi_{t} \not\in \odotZ{B} .
\end{equation}
Similarly, the third condition is replaced by
\begin{equation} \label{decalgFopt3}
 x(s) = a,   \  \  x(t) > a + 1 
\ \Longrightarrow \ 
 x + \chi_{s} - \chi_{t} \not\in \odotZ{B}.
\end{equation}
The combination of \eqref{decalgFopt1}--\eqref{decalgFopt3} 
is equivalent to
\begin{equation*} 
 x(s) \leq a,  \  \  x(t) \geq a + 1,  \  \  x(t) \geq x(s) + 2
\ \Longrightarrow \ 
 x + \chi_{s} - \chi_{t} \not\in \odotZ{B} ,
\end{equation*}
which coincides with the condition (\ref{basedecalglocoptZ2}) in our Step~2.

The remaining steps of our algorithm is a straightforward 
translation of the corresponding steps in \cite[page 258]{Fuj05book}
with obvious integrality requirements.
\finbox
\end{remark}

\paragraph{Acknowledgement} \ 
We thank Satoru Iwata and Akiyoshi Shioura for discussion about algorithms,
and Arie Tamir for indicating references.
We also thank the anonymous referee for helpful comments.
Insightful questions posed by Tam{\'a}s Kir{\'a}ly led to 
Theorem~\ref{THmnormconvcombdmS}.
The research was partially supported by the
National Research, Development and Innovation Fund of Hungary
(FK\_18) -- No. NKFI-128673,
and by JSPS KAKENHI Grant Number JP20K11697.




\end{document}